\definecolor{darkgreen}{rgb}{0,0.5,0}
\newcommand{\sm}{\mathrm{sm}}
\numberwithin{equation}{section}
\newtheorem{thmABC}{Theorem}
\newtheorem{thm}{Theorem}
\newtheorem{prop}[thm]{Proposition}
\newtheorem{lemma}[thm]{Lemma}
\newtheorem{cor}[thm]{Corollary}
\theoremstyle{remark}
\newtheorem{rem}[thm]{Remark}
\newtheorem{example}[thm]{Example}
\theoremstyle{definition}
\newtheorem{defn}[thm]{Definition}
\numberwithin{thm}{section}
\newcommand{\bA}{\mathbb A}
\newcommand{\bC}{\mathbb C}
\newcommand{\cC}{\mathcal C}
\newcommand{\Cl}{\mathrm{Cl}}
\newcommand{\coh}{\mathrm{coh}}
\newcommand{\cpt}{\mathrm{cpt}}
\newcommand{\rd}{\mathrm d}
\newcommand{\cE}{\mathcal{E}}
\newcommand{\cF}{\mathcal F}
\newcommand{\bG}{\mathbb G}
\newcommand{\length}{\mathrm{length}}
\newcommand{\rH}{\mathrm H}
\newcommand{\fm}{\mathfrak m}
\newcommand{\fp}{\mathfrak p}
\newcommand{\fq}{\mathfrak q}
\newcommand{\bP}{\mathbb P}
\newcommand{\cP}{\mathcal P}
\newcommand{\bQ}{\mathbb Q}
\newcommand{\cQ}{\mathcal Q}
\newcommand{\Qbar}{{\overline{\mathbb Q}}}
\newcommand{\bZ}{\mathbb Z}
\newcommand{\cO}{\mathcal{O}}
\newcommand{\cX}{\mathcal{X}}
\newcommand{\cY}{\mathcal{Y}}
\newcommand{\cZ}{\mathcal{Z}}
\newcommand{\cJ}{\mathcal{J}}
\newcommand{\bR}{\mathbb{R}}
\newcommand{\fS}{\mathfrak{S}}
\newcommand{\Tr}{\mathrm{Tr}}
\newcommand{\cK}{\mathcal K}
\newcommand{\cH}{\mathcal H}
\newcommand{\cV}{\mathcal V}
\newcommand{\red}{{\mathrm{red}}}
\newcommand{\csp}{{\mathrm{csp}}}
\newcommand{\lto}{\longrightarrow}
\renewcommand{\div}{\mathrm{div}}
\newcommand{\Kbar}{\overline{K}}
\DeclareMathOperator{\AJ}{AJ}
\DeclareMathOperator{\coker}{coker}
\DeclareMathOperator{\cD}{\mathcal{D}}
\DeclareMathOperator{\Div}{Div}
\DeclareMathOperator{\bF}{\mathbb{F}}
\DeclareMathOperator{\Gal}{Gal}
\DeclareMathOperator{\dR}{dR}
\DeclareMathOperator{\Lie}{Lie}
\DeclareMathOperator{\NS}{NS}
\DeclareMathOperator{\Res}{Res}
\DeclareMathOperator{\rank}{\mathrm{rk}}
\DeclareMathOperator{\Sel}{Sel}
\DeclareMathOperator{\Spec}{Spec}
\DeclareMathOperator{\modulo}{mod}
\DeclareMathOperator{\ord}{ord}
\DeclareMathOperator{\tf}{tf}
\DeclareMathOperator{\tors}{tors}
\DeclareMathOperator{\hol}{hol}
\title{Affine Chabauty I}
\author{Marius Leonhardt}
\address{Marius Leonhardt,
	Department of Mathematics and Statistics,
	Boston University,
	665 Commonwealth Ave,
	Boston, MA 02215,
	USA}
\email{mleonhar@bu.edu}
\author{Martin Lüdtke}
\address{Martin Lüdtke,
	Institut für Mathematik,
	Carl von Ossietzky Universität Oldenburg,
	26111 Oldenburg,
	Germany
}
\email{martin.luedtke@uol.de}
\begin{document}

\thispagestyle{empty}

\begin{abstract}
	We prove finiteness and give an explicit upper bound on the number of $S$-integral points on affine curves satisfying a certain rank-genus inequality. 
	We achieve this by developing an analogue of the Chabauty method, embedding the curve into its generalised Jacobian and bounding the Abel--Jacobi image of the $S$-integral points using arithmetic intersection theory. Our results also provide the foundations for a computational method to determine the set of $S$-integral points on affine curves which will be presented in a follow-up article.
\end{abstract}

\maketitle
\tableofcontents

\section{Introduction}
\label{sec: introduction}

Let $Y/\bQ$ be a smooth affine curve and let $\cY/\bZ_S$ be a
regular model of $Y$ over the ring of $S$-integers for some finite set of primes~$S$.
If $Y$ is hyperbolic, the set of $S$-integral points $\cY(\bZ_S)$ is finite by the theorems of Siegel, Mahler, and Faltings.
However, the natural questions of finding the points in $\cY(\bZ_S)$ or bounding the size of this set remain difficult open problems in general.
We present a new approach addressing these questions, which results in an explicit upper bound on $\#\cY(\bZ_S)$ for curves satisfying the hypothesis
\begin{equation}
\label{eq:uniform-chabauty-condition-intro}
r + \#S < g + \#|D| + n_2(D) - 1,
\end{equation}
an inequality involving the genus $g$, the Mordell--Weil rank $r$, the cardinality of~$S$, the number of closed points and the number of conjugate pairs of complex points in the boundary $D$ of $Y$.
Our method is an $S$-integral analogue of the method of Chabauty--Coleman \cite{Cha41, Col85}.
Here, we embed $Y$ into its \emph{generalised Jacobian} $J_Y$, choose an auxiliary prime $p \not\in S$ (possibly of bad reduction), and cut out $\cY(\bZ_S)$ inside $\cY(\bZ_p)$ by $p$-adic integrals of \emph{logarithmic differentials}, i.e.\ meromorphic differentials on a compactification of~$Y$ with at worst simple poles at $D$.
The existence of suitable log differentials is guaranteed by assumption~\eqref{eq:uniform-chabauty-condition-intro}, which ensures that the image of $\cY(\bZ_S)$ inside $J_Y(\bQ)$ is contained in a finite union of translates of subgroups of controllable rank.
We then bound the size of $\cY(\bZ_S)$ by estimating the number of zeros of $p$-adic integrals of log differentials.
In a follow-up article \cite{affchab2}, we present an algorithm to determine the log differentials and the zeros of their integrals in practice, thus computing a finite subset of $\cY(\bZ_p)$ containing $\cY(\bZ_S)$.

\subsection{Main results}

In this introduction, we work with curves over $\bQ$, but in the body of the paper we formulate our results over number fields.
Let $Y/\bQ$ be given as $X\smallsetminus D$ where $X/\bQ$ is a smooth projective curve and $D \neq \emptyset$ is a finite set of closed points called \emph{cusps}.
Let $S$ be a finite set of primes.
Let~$\cX$ be a regular model of $X$ over the ring~$\bZ_S$ of $S$-integers, i.e., a regular, flat, projective $\bZ_S$-scheme with an isomorphism $\cX_{\bQ} \cong X$ \cite[Definition~10.1.1]{liu2006algebraic}.
Let $\cD$ be the closure of~$D$ in~$\cX$ and set $\cY \coloneqq \cX \smallsetminus \cD$.
We use the following notation, which will be kept throughout this paper:
\begin{itemize}
	\item $r \coloneqq \rank J(\bQ)$ the Mordell--Weil rank of the Jacobian $J$ of~$X$;
	\item $g$ the genus of~$X$;
	\item $\#|D| > 0$ the number of cusps;
	\item $n \coloneqq \#D(\Qbar)>0$ the number of geometric cusps;
	\item write $n = n_1(D) + 2n_2(D)$ with $n_1(D) \coloneqq \#D(\bR)$ the number of real cusps and $n_2(D)$ the number of conjugate pairs of complex cusps.
\end{itemize}

We partition the $S$-integral points
\[
\cY(\bZ_S) = \coprod_{\Sigma} \cY(\bZ_S)_{\Sigma},
\]
where $\Sigma$ runs through the finitely many $S$-integral \emph{reduction types} of $\cY$, see \Cref{def:S-integral-reduction-type}.
Roughly, an $S$-integral reduction type $\Sigma = (\Sigma_{\ell})_{\ell}$ specifies for each prime $\ell \not\in S$ a component $\Sigma_{\ell}$ of the mod-$\ell$ fibre~$\cX_{\ell}$, while for $\ell \in S$ it specifies either a component of the mod-$\ell$ fibre or the mod-$\ell$ reduction of a cusp on a suitable extension of~$\cX$ to a regular model over $\bZ$.
The subset $\cY(\bZ_S)_{\Sigma} \subseteq \cY(\bZ_S)$ consists of those $S$-integral points that reduce onto $\Sigma_{\ell}$ modulo $\ell$ for all~$\ell$.
Our main result is:

\begin{thmABC}
	\label{thm:existence-of-chabauty-function-intro}
	Let $\Sigma$ be an $S$-integral reduction type and let $C(\Sigma)\subseteq S$ denote the set of those $\ell \in S$ where $\Sigma_{\ell}$ is cuspidal, see \Cref{def:C-Sigma}.
	Let $p \not\in S$ be a prime.
	Assume
	\begin{equation}
	\label{eq:chabauty-condition-intro}
	r + \#C(\Sigma) < g + \#|D| + n_2(D) - 1.
	\end{equation}
	Then there exist a non-zero log differential $0 \neq \omega \in \rH^0(X_{\bQ_p}, \Omega^1(D))$ and a constant $c \in \bQ_p$ such that the function $\rho\colon \cY(\bZ_p) \to \bQ_p$ given by
	\begin{equation}
	\label{eq:rho-function-intro}
	\rho(P) \coloneqq \int_{P_0}^P \omega - c
	\end{equation}
	vanishes on $\cY(\bZ_S)_{\Sigma}$.
\end{thmABC}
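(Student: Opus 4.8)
The plan is to realise $\rho$ as a linear functional on the $p$-adic logarithm of the Abel--Jacobi image and to produce the required log differential by a dimension count. Write $\iota\colon \cY \to J_Y$, $P \mapsto [P - P_0]$, for the Abel--Jacobi map into the generalised Jacobian, which sits in an exact sequence $0 \to T \to J_Y \to J \to 0$ with $T$ a torus of dimension $n-1$, so that $\dim J_Y = g + n - 1$. Its cotangent space at the identity is canonically the space of log differentials $\rH^0(X_{\bQ_p}, \Omega^1(D))$, and the $p$-adic integral factors as $\int_{P_0}^P \omega = \langle \log_{J_Y}(\iota(P)), \omega\rangle$ through the $p$-adic logarithm $\log_{J_Y}\colon J_Y(\bQ_p) \to \Lie(J_Y)(\bQ_p)$. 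Thus finding $(\omega, c)$ with $\rho \equiv 0$ on $\cY(\bZ_S)_{\Sigma}$ amounts to finding a nonzero $\omega$ on which the function $P \mapsto \langle \log_{J_Y}(\iota(P)), \omega\rangle$ is constant over the reduction class.

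First I would reduce the statement to a rank bound. Let $\Gamma \subseteq J_Y(\bQ)$ be the subgroup generated by the differences $\iota(P) - \iota(P')$ for $P, P' \in \cY(\bZ_S)_{\Sigma}$; it is finitely generated since $J(\bQ)$ is (Mordell--Weil) and the torus part lands in a group of $S$-units. A log differential $\omega$ is constant on $\log_{J_Y}(\iota(\cY(\bZ_S)_{\Sigma}))$ precisely when it annihilates $\log_{J_Y}(\Gamma)$, and the space of such $\omega$ has dimension at least $\dim J_Y - \rank \Gamma$, because $\log_{J_Y}(\Gamma)$ spans a $\bQ_p$-subspace of dimension at most $\rank \Gamma$. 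Hence a nonzero $\omega$ exists as soon as $\rank \Gamma < \dim J_Y = g + n - 1$; one then sets $c \coloneqq \langle \log_{J_Y}(\iota(P_1)), \omega\rangle$ for any fixed $P_1 \in \cY(\bZ_S)_{\Sigma}$ (taking $c = 0$ if the class is empty), and $\rho$ vanishes on the class by construction.

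The heart of the argument is the estimate
\[
\rank \Gamma \;\le\; r + \#C(\Sigma) + \bigl(n_1(D) + n_2(D) - \#|D|\bigr),
\]
which I would establish using the exact sequence for $J_Y$. Projecting to $J$ bounds the contribution of the abelian part by the Mordell--Weil rank $r$. For the torus part, the key observation is that for two points of the \emph{same} reduction type $\Sigma$ the torus coordinate of $\iota(P) - \iota(P')$ is a unit at every finite place outside $C(\Sigma)$, and at each $\ell \in C(\Sigma)$ may be supported only at the single place cut out by the specified cusp. Writing $T = (\Res_{D/\bQ}\bG_m)/\bG_m$ and applying Dirichlet's $S$-unit theorem place by place over the residue fields $k(P)$ of the cusps, the archimedean places contribute $\sum_{P}(r_1(P) + r_2(P) - 1) = n_1(D) + n_2(D) - \#|D|$, where $r_1(P), r_2(P)$ are the numbers of real and complex places of $k(P)$, while the finite cuspidal places contribute at most $\#C(\Sigma)$; passing to the quotient by the diagonal $\bG_m$ can only decrease the rank, so the displayed upper bound holds. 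Substituting $n = n_1(D) + 2n_2(D)$ shows that this bound is $< g + n - 1$ exactly when the hypothesis $r + \#C(\Sigma) < g + \#|D| + n_2(D) - 1$ holds, which completes the proof.

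The main obstacle is the torus rank computation: one must track carefully how the diagonal $\bG_m$ interacts with the $S$-unit groups of the possibly non-trivial residue fields $k(P)$, so that the finite cuspidal places genuinely contribute at most $\#C(\Sigma)$ rather than the a priori larger count coming from places splitting in the $k(P)$, and that the archimedean regulator rank is exactly $n_1(D) + n_2(D) - \#|D|$. A secondary technical point is the well-definedness of $\log_{J_Y}$ and of the integral $\int_{P_0}^P\omega$ when $p$ is a prime of bad reduction; here one uses that $J_Y$ is semi-abelian, so that the logarithm is defined on a finite-index subgroup of $J_Y(\bQ_p)$ and extends to the whole group modulo torsion into the $\bQ_p$-vector space $\Lie(J_Y)(\bQ_p)$.
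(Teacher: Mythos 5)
Your overall architecture is the same as the paper's: constrain the Abel--Jacobi image of $\cY(\bZ_S)_{\Sigma}$ to a translate of a finitely generated subgroup of $J_Y(\bQ)$ of rank $r + \#C(\Sigma) + n_1(D) + n_2(D) - \#|D|$, compare with $\dim_{\bQ_p}\rH^0(X_{\bQ_p},\Omega^1(D)) = g+n-1$, and run the usual Chabauty logic through $\log_{J_Y}$; your Dirichlet count of the archimedean contribution and the final arithmetic agree with \Cref{thm:ker-sigma} and \Cref{thm:rank-of-selmer-set}. However, the heart of the argument --- your ``key observation'' on the torus coordinates --- is asserted rather than proved, and as stated it is not quite correct. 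First, the ``torus coordinate of $\iota(P)-\iota(P')$'' is not defined: the class $[P-P']$ need not lie in $T_D(\bQ)$, so the statement must be made for $\bZ$-linear combinations $\sum n_i([P_i]-[P_i'])$ that become principal on $X$, say equal to $\div(f)$, and the claim becomes a statement about the valuations $\ord_x(f(Q))$ for places $x$ of the residue fields $k(Q)$. Second, at a prime $\ell \notin S$ of \emph{bad} reduction this valuation vector is not zero (``a unit'') in general, even for points of a fixed reduction type: what is true is that it is proportional to the ramification vector $(e_x)_{x\mid\ell}$, hence trivial only after quotienting by the diagonal $\bQ^\times$. Proving even this requires exactly the machinery you skipped: one extends $f$ to a divisor $\div_{\cX}(f)$ on a regular model, observes that its vertical part has $\ell$-intersection number zero with all vertical divisors, and uses that all points of the fixed type reduce onto a \emph{common component} so that the reduction component of the horizontal part vanishes, forcing the vertical part to be a $\bQ$-multiple of the entire fibre (corank~1 of the intersection matrix). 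This is the content of \Cref{lem:sigma-of-principal}, \Cref{lem:phi-formula} and \Cref{thm:abel-jacobi-integral-reduction-type} in the paper, packaged into the $D$-intersection map $\sigma$.

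A second, related gap is at the cuspidal primes $\ell \in C(\Sigma)$: your claim that the support is ``only at the single place cut out by the specified cusp'' silently requires the model to be $D$-transversal at the reduction point $\Sigma_\ell$. Without that hypothesis, several places of $k(Q)$ (i.e.\ several points of $\tilde\cD$) can lie over $\Sigma_\ell$, and the contribution at $\ell$ would only be bounded by the rank of the subgroup they generate, which can exceed~$1$ --- precisely the failure mode noted in \Cref{rem:nc-condition-relevance}; this is why the paper works with $D$-transversal models and why \Cref{thm:abel-jacobi-rational-reduction-type}\ref{item:cuspidal-red-type} yields $\bZ\cdot[\Sigma_\ell]$ and no more. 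Your closing paragraph identifies the diagonal-$\bG_m$ bookkeeping and the splitting of places as the main obstacle, but the genuinely hard points are the two just named: the vertical-divisor correction at bad primes outside $S$, and transversality at cuspidal primes. With those two steps supplied (in effect, reconstructing \S\ref{sec:local-theory}--\S\ref{sec:global-theory} of the paper), your dimension count and the derivation of the inequality $r + \#C(\Sigma) < g + \#|D| + n_2(D) - 1$ go through exactly as in the paper's proof of \Cref{thm:existence-of-chabauty-function-nf}.
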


\Cref{thm:existence-of-chabauty-function-intro} is a special case of \Cref{thm:existence-of-chabauty-function-nf}, which holds over general number fields. As \eqref{eq:uniform-chabauty-condition-intro} implies \eqref{eq:chabauty-condition-intro} for every $S$-integral reduction type, we deduce:

\begin{thmABC}
	\label{thm:fin-many-Chab-functions}
	If the inequality \eqref{eq:uniform-chabauty-condition-intro} holds, 
	then $\cY(\bZ_S)$ is contained in a finite union of zero loci of $p$-adic analytic functions on $\cY(\bZ_p)$. %
\end{thmABC}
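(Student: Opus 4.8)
The plan is to deduce this statement directly from \Cref{thm:existence-of-chabauty-function-intro} by running it over all reduction types at once. The starting point is the finite partition
\[
\cY(\bZ_S) = \coprod_{\Sigma} \cY(\bZ_S)_{\Sigma},
\]
so it suffices to cover each piece $\cY(\bZ_S)_{\Sigma}$ by the zero locus of a single $p$-adic analytic function and then take the union over all $\Sigma$. Here I would invoke the fact, built into the definition of the reduction types, that there are only finitely many $S$-integral reduction types $\Sigma$, so that the resulting union is finite.

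The key observation is that the uniform hypothesis \eqref{eq:uniform-chabauty-condition-intro} implies the per-reduction-type hypothesis \eqref{eq:chabauty-condition-intro} for every $\Sigma$. Indeed, by \Cref{def:C-Sigma} the set $C(\Sigma)$ of cuspidal places of $\Sigma$ is a subset of $S$, whence $\#C(\Sigma) \le \#S$. Combining this with \eqref{eq:uniform-chabauty-condition-intro} gives
\[
r + \#C(\Sigma) \le r + \#S < g + \#|D| + n_2(D) - 1,
\]
which is precisely \eqref{eq:chabauty-condition-intro}. Thus the hypotheses of \Cref{thm:existence-of-chabauty-function-intro} are met for each $\Sigma$. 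Fixing the auxiliary prime $p \notin S$ and applying that theorem to each reduction type then produces, for every $\Sigma$, a non-zero log differential $\omega_{\Sigma} \in \rH^0(X_{\bQ_p}, \Omega^1(D))$ and a constant $c_{\Sigma} \in \bQ_p$ such that the $p$-adic analytic function
\[
\rho_{\Sigma}(P) = \int_{P_0}^P \omega_{\Sigma} - c_{\Sigma}
\]
vanishes on $\cY(\bZ_S)_{\Sigma}$. Since each $\rho_{\Sigma}$ is analytic on $\cY(\bZ_p)$ — being the $p$-adic integral of a differential shifted by a constant — the piece $\cY(\bZ_S)_{\Sigma}$ lies in the zero locus of $\rho_{\Sigma}$, and therefore
\[
\cY(\bZ_S) = \bigcup_{\Sigma} \cY(\bZ_S)_{\Sigma} \subseteq \bigcup_{\Sigma} \{\, \rho_{\Sigma} = 0 \,\},
\]
a finite union of zero loci of $p$-adic analytic functions, as claimed.

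I do not expect any serious obstacle here: all the real work is carried by \Cref{thm:existence-of-chabauty-function-intro}, and the deduction itself is a formal bookkeeping argument. The only two points needing care are the finiteness of the index set $\{\Sigma\}$ and the inclusion $C(\Sigma) \subseteq S$, both of which are already encoded in the definitions of the reduction types and of $C(\Sigma)$.
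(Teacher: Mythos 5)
Your proof is correct and matches the paper's own deduction exactly: the paper likewise obtains Theorem~\ref{thm:fin-many-Chab-functions} from Theorem~\ref{thm:existence-of-chabauty-function-intro} by observing that $C(\Sigma) \subseteq S$ gives $\#C(\Sigma) \le \#S$, so \eqref{eq:uniform-chabauty-condition-intro} implies \eqref{eq:chabauty-condition-intro} for every $\Sigma$, and then applying Theorem~\ref{thm:existence-of-chabauty-function-intro} to each of the finitely many $S$-integral reduction types in the partition $\cY(\bZ_S) = \coprod_{\Sigma} \cY(\bZ_S)_{\Sigma}$. Both the finiteness of the index set and the analyticity of the functions $\rho_{\Sigma}$ are established in the paper just as you cite them, so there is nothing to add.
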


The $p$-adic analytic functions~\eqref{eq:rho-function-intro} are integrals of log differentials plus a constant, and by analysing their Newton polygons we arrive at bounds for the number of $S$-integral points.

\begin{thmABC}
	\label{thm:bound}
	If \eqref{eq:uniform-chabauty-condition-intro} holds and $p>2g+n$, then
	\[ \# \cY(\bZ_S) \leq (\#\cY^{\sm}_{p}(\bF_p) + 2g - 2 + n) \prod_{\ell\in S} (n_\ell+\# (\cX_{{\ell}}^{\sm} \cap \cD)(\bF_\ell)) \prod_{\ell \not\in S \cup \{p\}} n_{\ell}, \]
	where $n_\ell$ denotes the number of components of $\cX_{{\ell}}\smallsetminus \cD$ that contain a smooth $\bF_\ell$-point and $\mathcal{A}^{\sm}_{{\ell}}$ denotes the smooth locus of the special fibre of $\mathcal{A}\in\{\cX,\cY\}$ at $\ell$.
\end{thmABC}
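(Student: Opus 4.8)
The plan is to marry the Chabauty functions produced by \Cref{thm:existence-of-chabauty-function-intro} with a Newton polygon count of their zeros in each residue disc at $p$. I start from the partition $\cY(\bZ_S)=\coprod_{\Sigma}\cY(\bZ_S)_{\Sigma}$ into $S$-integral reduction types. Because $C(\Sigma)\subseteq S$, the uniform inequality \eqref{eq:uniform-chabauty-condition-intro} forces the pointwise inequality \eqref{eq:chabauty-condition-intro} for every $\Sigma$, so \Cref{thm:existence-of-chabauty-function-intro} furnishes, for each $\Sigma$, a nonzero log differential $\omega_{\Sigma}\in\rH^0(X_{\bQ_p},\Omega^1(D))$ and a constant $c_{\Sigma}$ whose function $\rho_{\Sigma}$ vanishes on $\cY(\bZ_S)_{\Sigma}$. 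Since this construction depends only on $C(\Sigma)\subseteq S$ and the local conditions at the primes $\ell\neq p$—the vanishing of the $p$-adic integral being precisely the Chabauty condition, which is uniform over the residue discs at $p$—we may and do regard $\Sigma$ as prescribing components only at the primes $\ell\neq p$; a single $\rho_{\Sigma}$ then annihilates the full fibre $\cY(\bZ_S)_{\Sigma}$ over such a datum. The number of these reduction types carrying $S$-integral points is $\prod_{\ell\in S}\bigl(n_\ell+\#(\cX^{\sm}_\ell\cap\cD)(\bF_\ell)\bigr)\prod_{\ell\notin S\cup\{p\}}n_\ell$: for $\ell\notin S\cup\{p\}$ an $S$-integral point reduces to one of the $n_\ell$ components of $\cX_\ell\smallsetminus\cD$ meeting the smooth locus, while for $\ell\in S$ it may instead specialise onto one of the $\#(\cX^{\sm}_\ell\cap\cD)(\bF_\ell)$ smooth reductions of the cusps. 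It thus remains to bound the number of zeros of one such $\rho_{\Sigma}$ on $\cY(\bZ_p)$ by $\#\cY^{\sm}_p(\bF_p)+2g-2+n$.

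For this local bound I would decompose $\cY(\bZ_p)$ into the residue discs indexed by $\bar P\in\cY^{\sm}_p(\bF_p)$. On the disc attached to $\bar P$ choose a uniformiser $t$, normalised so that the $\bZ_p$-points are $t\in p\bZ_p$; since $\bZ_p$-points of $\cY$ avoid the cusps, $\omega_{\Sigma}$ is holomorphic here and, after scaling $\omega_{\Sigma}$ and $c_{\Sigma}$ by a power of $p$, we may write $\omega_{\Sigma}=\bigl(\sum_{i\geq0}a_i t^i\bigr)\,\rd t$ with $a_i\in\bZ_p$. The Coleman integral defining $\rho_{\Sigma}$ is then the power series $f(t)=b_0+\sum_{i\geq0}\tfrac{a_i}{i+1}\,t^{i+1}$, whose zeros on the disc are read off from its Newton polygon. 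Writing $m_{\bar P}$ for the number of zeros of $\omega_{\Sigma}$ (equivalently of $\sum_i a_i t^i$) on the disc, the hypothesis $p>2g+n$ guarantees that the denominators $i+1$ relevant to these zeros are $p$-adic units, hence do not alter the lower convex hull, so that $f$ has at most $m_{\bar P}+1$ zeros there. Summing over all discs gives
\[
\#\{\rho_{\Sigma}=0\}\;\leq\;\sum_{\bar P\in\cY^{\sm}_p(\bF_p)}(m_{\bar P}+1)\;=\;\#\cY^{\sm}_p(\bF_p)+\sum_{\bar P}m_{\bar P}.
\]
The last sum counts zeros of the single nonzero log differential $\omega_{\Sigma}$ lying in smooth residue discs, so it is at most $\deg\div_0\omega_{\Sigma}=2g-2+\deg\div_\infty\omega_{\Sigma}\leq 2g-2+n$, as $\omega_{\Sigma}$ has at worst simple poles at the $n$ geometric cusps. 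Multiplying the resulting per-type bound by the number of reduction types yields the claim.

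The main obstacle is the Newton polygon estimate itself: one must show that integrating a log differential adds at most one zero per residue disc, i.e.\ that $f$ has at most $m_{\bar P}+1$ zeros. This is exactly where $p>2g+n$ is used—the number of zeros of $\omega_{\Sigma}$ in any disc is bounded by $2g-2+n<p$, so every denominator $i+1$ that could affect the relevant slopes of the Newton polygon is a $p$-adic unit and therefore invisible to the valuation. A secondary technical point is the passage to smooth residue discs: one has to check, using regularity of $\cX$ together with $p>2g+n$, that the contribution of $\bZ_p$-points specialising into the singular locus of $\cY_p$ is already subsumed, so that summing over $\cY^{\sm}_p(\bF_p)$ accounts for all $S$-integral points.
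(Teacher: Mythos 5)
Your skeleton matches the paper's proof: partition by $S$-integral reduction types, invoke \Cref{thm:existence-of-chabauty-function-intro} per type, count zeros disc-by-disc at $p$ via a Newton polygon lemma (the paper's \Cref{lem:newtpolyg}, used through \Cref{thm:integral-bound-component}), bound the total vanishing of the reduced differential by $2g-2+n$, and multiply by the number of prime-to-$p$ reduction types, which you count correctly. One misstatement first: your claim that a \emph{single} $\rho_\Sigma$ — in particular a single constant — annihilates the whole fibre over a prime-to-$p$ datum is false when $\cX_p$ has several components. \Cref{thm:existence-of-chabauty-function-nf} makes only the \emph{differential} uniform across reduction types with the same cuspidal part; the constant is the value of $\langle \log_{J_Y}(\cdot),\omega\rangle$ on the translate $\Sel(P_0,\Sigma)$, and the local condition \eqref{eq:Sm-integral} at $p$ depends on $\Sigma_p$ through $\Phi(\Sigma_p - \cpt_p(P_0))$, so $c$ genuinely varies with the mod-$p$ component (cf.\ \Cref{selmer-sets-constant} for when it does not). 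This slip happens to be harmless numerically, because your per-disc bound $m_{\bar P}+1$ does not see the constant; the paper accordingly writes $\omega = \omega(\Sigma')$ with constants $c_{(\Sigma',C)}$ varying over components $C$ of $\cX_p$.

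The genuine gap is the step $\sum_{\bar P} m_{\bar P} \leq \deg \div_0\omega_\Sigma \leq 2g-2+n$. You justify it by a degree count on the generic fibre, but $m_{\bar P}$ is the vanishing order of the \emph{mod-$p$ reduction} of $\omega_\Sigma$, and the theorem expressly allows $p$ of bad reduction. There $\Omega^1_{\cX/\bZ_p}$ is not a line bundle, so "the reduction of $\omega$" does not yet make sense: one must view $\omega$ as a meromorphic section of the canonical sheaf $\omega_{\cX/\bZ_p}$, rescale by a power of $p$ \emph{separately on each multiplicity-$1$ component} $C$ so that $C$ does not occur in $\div_{\cX}(\omega^C)$, and then identify $m_{\bar P}$ with the intersection number $i_{\bar P}(\cH_e,C)$ of the effective horizontal part of the canonical divisor. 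Since the rescaling factors differ from component to component, the mod-$p$ zeros on different components are not specialisations of one fixed generic zero set, and your one-line bound only works verbatim at good-reduction primes; the paper closes exactly this gap with \Cref{lem:A3} and \Cref{lem:A4}, where $\sum_C n_C \leq \cH_e\cdot\cX_p = (\cK + \cH_p - \cV)\cdot\cX_p = 2g-2+n$ follows from adjunction, $\cV\cdot\cX_p = 0$, and $\cH_p\cdot\cX_p \leq n$ (note this recovers your $\deg\div_0\omega$ heuristic, but only after the reduction theory is in place). Finally, your "secondary technical point" is a non-issue: $\bZ_p$-points of a regular model always reduce into the smooth locus of the special fibre (see \Cref{def:reduction-map} and the reference given there), so nothing needs to be "subsumed" — the actual work at bad primes is the canonical-sheaf argument above, which your proposal asserts rather than proves.
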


In fact, different reduction types might give rise to the same Chabauty function $\rho$ in \eqref{eq:rho-function-intro}, so the factor in \Cref{thm:bound} coming from summing over all reduction types can be improved, see \Cref{rem:improved-bound}.
We demonstrate this in \Cref{thm:even-deg-hyperell-bound} by exhibiting a large class of even degree hyperelliptic curves $Y\colon y^2 = f(x)$ for which the rank condition $r=g$ implies the simple bound
\[
	\#\cY(\bZ)\leq \#\cY(\bF_p)+2g
\]
for any prime $p > 2g + 2$ of good reduction.
We also include examples where the bound is sharp.

\begin{rem}\label{rem:circle}
	The results in this article are logically independent from, but inspired by the following circle of ideas:
	
	\begin{enumerate}
		\item Chabauty's method for projective curves \cite{Cha41, Col85, mcpoonen};
		\item the ``linear Chabauty'' (or \emph{depth~1}) part of Kim's nonabelian Chabauty method for affine curves \cite{Kim05, Kim09, BDCKW};
		\item the Chabauty--Skolem method \cite{skolem, poonen:skolem, triantafillou:ROS-Chabauty} for affine curves.
	\end{enumerate}
	
	Our approach is closest to (1), but replaces the Jacobian by the generalised Jacobian, holomorphic differentials by logarithmic differentials, and uses arithmetic intersection theory to constrain the Abel--Jacobi image, see \Cref{sec:strategy}.
	
	Using approach (2), we previously showed in joint work with Steffen Müller that hypothesis \eqref{eq:uniform-chabauty-condition-intro} implies finiteness of the depth-1 Chabauty--Kim locus \cite[Theorem~A(1)+Remark~1.4]{LLM:LQChabAffine}, and obtained bounds under a more restrictive hypothesis \cite[Remark 1.2]{LLM:LQChabAffine}.
	The present manuscript improves upon this in several aspects: 
	the bound in \Cref{thm:bound} is smaller, we only need hypothesis \eqref{eq:uniform-chabauty-condition-intro} to prove both finiteness and a bound, we allow $p$ to be a prime of bad reduction for~$\cX$, our method works over general number fields, and we determine the type of Coleman functions on $\cY(\bZ_p)$ vanishing on $S$-integral points.
	Crucial for these improvements is the geometry and arithmetic of the generalised Jacobian.
	Moreover, the functions $\rho$ in \Cref{thm:existence-of-chabauty-function-intro} can be algorithmically determined in practice, and by computing their zeros one finds a finite subset of $\cY(\bZ_p)$ containing $\cY(\bZ_S)$. In the case of an even-degree hyperelliptic curve over $\bQ$ and $S = \emptyset$, such a method was recently developed in \cite{GM:LinearQuadraticChabauty} using $p$-adic Coleman--Gross heights. This ``Linear Quadratic Chabauty'' method arises as a special case of our method, see Section~\ref{sec:recovering-linear-quadratic-chabauty}.
	
	Our method can also be seen as a refined version of (3) in the sense of \cite{BD:refined}.
	Namely, the idea of (3) is to embed $\cY$ into 
	a suitable model $\cJ/\bZ_S$ of the generalised Jacobian $J_Y$. Finiteness of $\cY(\bZ_S)$ is ensured by 
	\begin{equation*}
	\rank \cJ(\bZ_S) < \dim J_Y,
	\end{equation*}
	which is usually stricter than our Affine Chabauty Condition~\eqref{eq:uniform-chabauty-condition-intro}, as can be seen by looking at the complement in~$\bP^1$ of $n \geq 2$ rational points, for example. The reason is ultimately that we get a more refined control on the Abel--Jacobi image in the generalised Jacobian by also incorporating local conditions at primes inside~$S$ via the notion of $S$-integral reduction type.
\end{rem}

\begin{rem}
	This article does not touch upon the ``quadratic Chabauty'' results in \cite{LLM:LQChabAffine}.
	There, the hypothesis for finiteness of the depth-2 locus $\cY(\bZ_p)_{S,2}^{\mathrm{BD}}$ has an additional term of $\rho_f = \rank \NS(J) + \rank \NS(J_{\Qbar})^{\sigma=-1}$ on the right hand side of~\eqref{eq:uniform-chabauty-condition-intro}. %
	We believe the methods of this article to be a first step in the direction of making affine quadratic Chabauty explicit as well, and aim to pursue this line of investigation in the future.
\end{rem}

\subsection{Strategy}
\label{sec:strategy}

Let $J_Y$ denote the generalised Jacobian of $(X,D)$, see \cite[Chapter V]{serrealggp}.
We have an Abel--Jacobi map $\AJ_{P_0}\colon Y\to J_Y$ associated to the base point $P_0$, which yields the Affine Chabauty diagram
\begin{equation}
\label{eq:chabauty-diagram0}
\begin{tikzcd}
\cY(\bZ_S) \dar["\AJ_{P_0}"] \rar[hook] & \cY(\bZ_p) \dar["\AJ_{P_0}"] \drar["\int_{P_0}"]& \\
J_Y(\bQ) \rar[hook] & J_Y(\bQ_p) \rar["\log_{J_Y}"] & \rH^0(X_{\bQ_p}, \Omega^1(D))^\vee,
\end{tikzcd}
\end{equation}
where $\Omega^1(D)$ is the module of logarithmic differentials on $(X,D)$.
Unlike the situation for projective curves, where we have the Jacobian $J$ in the bottom row whose Mordell--Weil group $J(\bQ)$ is finitely generated, the generalised Jacobian $J_Y$ is only a semi-abelian variety and the group $J_Y(\bQ)$ is almost never finitely generated.
The crucial observation is that if we only look at $S$-integral points of a given reduction type $\Sigma$, then the Abel--Jacobi image of $\cY(\bZ_S)_{\Sigma}$ inside $J_Y(\bQ)$ does in fact lie in a translate of a certain finitely generated subgroup.
The hypothesis \eqref{eq:chabauty-condition-intro} ensures that the rank of this subgroup is strictly less than $\dim_{\bQ_p} \rH^0(X_{\bQ_p}, \Omega^1(D)) = g+n-1$, and from this point onwards we deduce \Cref{thm:existence-of-chabauty-function-intro} by the ``usual Chabauty logic''.
Namely, the image of this subgroup under $\log_{J_Y}$ is contained in a proper $\bQ_p$-subspace of $\rH^0(X_{\bQ_p}, \Omega^1(D))^\vee$, so there is a non-trivial log differential $\omega$ vanishing on this image. The map $\cY(\bZ_p) \to \bQ_p$, $P \mapsto \int_{P_0}^P \omega$ is then constant on $\cY(\bZ_S)_{\Sigma}$.

It remains to explain how we restrict the Abel--Jacobi image of $\cY(\bZ_S)_{\Sigma}$ inside $J_Y(\bQ)$. We achieve this by constructing a homomorphism $\sigma\colon J_Y(\bQ) \to V_D$ to a certain $\bQ$-vector space~$V_D$ with the property that $\ker(\sigma)$ has finite rank and the image of $\cY(\bZ_S)_{\Sigma}$ in~$V_D$ is contained in a translate of a subgroup of small rank. The map~$\sigma$ is constructed using arithmetic intersection theory on a regular model. For simplicity, assume that~$\cX$ is a regular model of~$X$ over~$\bZ$ such that the closure~$\cD$ of~$D$ in~$\cX$ is normal.
Consider the group $Z_0(\cD)$ of zero-cycles on~$\cD$ and let $\pi$ denote the map $\cD \to\Spec(\bZ)$. We define the \emph{$D$-intersection map}
\[
\sigma \colon \Div^0(Y) \to V_D \coloneqq Z_0(\cD)/\pi^* Z_0(\Spec(\bZ)) \otimes_{\bZ} \bQ, \quad F \mapsto \Psi(F).\cD.
\]
Here we have $\Psi(F)=\cF+\Phi(F)$ with $\cF$ the extension of $F$ to a horizontal divisor on $\cX$ obtained by taking the closure of points, $\Phi(F)$ is a sum of vertical $\bQ$-divisors on $\cX$ such that $\Psi(F)$ has $\ell$-intersection number~$0$ with all vertical divisors above~$\ell$ for all primes~$\ell$, and 
\[\Psi(F).\cD = \sum_x i_x(\Psi(F),\cD) [x]\]
denotes the intersection cycle of the divisors $\Psi(F)$ and~$\cD$ on~$\cX$. 
We show that $\sigma$ descends to $J_Y(\bQ)$ and is independent of the choice of regular model $\cX$. The map $\sigma$ naturally decomposes as a direct sum of local $D$-intersection maps $\sigma_{\ell}\colon J_Y(\bQ_{\ell}) \to V_{D,\ell}$. The construction is inspired by the Arakelov-theoretic interpretation of local Néron symbols due to Faltings and Hrijlac \cite{faltings:calculus, hriljac} which appear as the local components of the Néron--Tate height and of the $p$-adic Coleman--Gross height at places away from~$p$ \cite{CG89}.

In the local setting, we show that the $D$-intersection map provides a $\bQ$-form of the $p$-adic étale Kummer map of the generalised Jacobian (\Cref{kummer-comparison}). This can be seen as an ``independence of $p$'' result and suggests an interpretation of $\sigma_{\ell}$ as a \emph{motivic Kummer map} for $J_Y$, which may be of independent interest.

In a sense, the intersection cycle $\sigma(P-P_0)$ records where and how badly the points $P$ and $P_0$ reduce to the cusps.
If we restrict to $S$-integral points $P$ of a fixed reduction type, we can show that the composition $\sigma_{\ell}\circ\AJ_{P_0}\colon \cY(\bZ_S)_\Sigma \to V_{D,\ell}$ lands in a certain subset $\fS_{\ell}(P_0,\Sigma_{\ell}) \subseteq V_{D,\ell}$, which is either a point or a translate of a rank $1$ subgroup (\Cref{thm:abel-jacobi-rational-reduction-type}).
Putting things together globally, we show in \Cref{thm:map-to-selmer-set} that the Abel--Jacobi image of $\cY(\bZ_S)_\Sigma$ lies in the \emph{global Selmer set}
\[
\Sel(P_0,\Sigma) \coloneqq \sigma^{-1}\left(\prod_{\ell} \fS_{\ell}(P_0,\Sigma_{\ell})\right) \subseteq J_Y(\bQ).
\]
By determining the rank of the kernel of $\sigma$ (\Cref{thm:ker-sigma}), we show that $\Sel(P_0,\Sigma)$ is a translate of a subgroup of a certain rank (\Cref{thm:rank-of-selmer-set}).
Hypothesis \eqref{eq:chabauty-condition-intro} then precisely ensures that this rank is strictly less than $\dim_{\bQ_p} \rH^0(X_{\bQ_p}, \Omega^1(D)) = g+n-1$.

\subsection{Structure of the paper}

In \Cref{sec:local-theory}, we define the $D$-intersection map $\sigma_{\fm}$ in a local setting, namely for affine curves over discretely valued fields. 
We introduce $D$-transversal models, integral and rational reduction types $\Sigma_{\fm}$ and show that the composition $\sigma_{\fm}\circ\AJ_{P_0}$ restricted to points of a given reduction type lands in the subset $\fS_{\fm}(P_0,\Sigma_{\fm})$.
We also compare the $D$-intersection map with the local Kummer map.
In \Cref{sec:global-theory}, we define the global $D$-intersection map $\sigma$ for an affine curve over a number field, and we determine the rank of $\ker(\sigma)$. We define $S$-integral reduction types $\Sigma$ and the global Selmer set $\Sel(P_0,\Sigma)$.
We then show that the Abel--Jacobi images of $S$-integral points of reduction type $\Sigma$ are contained in $\Sel(P_0,\Sigma)$ and bound the rank of $\Sel(P_0,\Sigma)$.
In \Cref{sec:integration}, we summarise the theory of $p$-adic abelian integrals of log differentials. %
In \Cref{sec:chabauty-functions} we prove \Cref{thm:existence-of-chabauty-function-intro} and describe an $S$-integral version of Siksek's Restriction of Scalars Chabauty \cite{siksek:number-field-chabauty}.
We then proceed to prove \Cref{thm:bound} in \Cref{sec:bounds}, %
and end this paper in \Cref{sec:examples} with three applications of our methods to even degree hyperelliptic curves: recovering the Linear Quadratic Chabauty method of~\cite{GM:LinearQuadraticChabauty}, an improved bound on the number of integral points of such curves, and the complete determination of the $\bZ[\zeta_3]$-points of a curve of genus~2 and Mordell--Weil rank~2.

\subsection*{Acknowledgements}

We thank Jennifer Balakrishnan, Alex Betts, Stevan Gajovi\'c, Sachi Hashi\-moto, Enis Kaya, Bjorn Poonen, Pim Spelier, and Elie Studnia for discussions on this paper. We thank Steffen Müller for helpful dicussions and comments on a previous version of this paper. 
The first author is supported by a Walter Benjamin Scholarship from the Deutsche Forschungsgemeinschaft (DFG, German Research Foundation), project number LE 5634/1, and also acknowledges support from the DFG through TRR 326 Geometry and Arithmetic of Uniformized Structures, project number 444845124. The second author is supported by a Minerva Fellowship of the Minerva Stiftung Gesellschaft für die Forschung mbH and also acknowledges support through a guest postdoc fellowship at the Max Planck Institute for Mathematics in Bonn and through an NWO Grant, project number VI.Vidi.192.106.

\section{Local theory: The $D$-intersection map}
\label{sec:local-theory}

We recall the definition of the generalised Jacobian and analyse the image of the Abel--Jacobi map for an affine curve over a discretely valued field. The key innovation is the construction of the so-called \emph{$D$-intersection map} from $J_Y(K)$ to a finite-dimensional $\bQ$-vector space $V_{D,\fm}$ in \Cref{sec:intersection map}. This map is used to constrain the image of integral and rational points of fixed reduction type under the Abel--Jacobi map.

\subsection{The generalised Jacobian}
\label{sec:generalised-jacobian}

Let $K$ be a field, let $X/K$ be a smooth, projective curve, let $D \neq \emptyset$ be a finite set of closed points of~$X$, and set $Y = X \smallsetminus D$. The Jacobian of~$X$ is denoted by~$J$. The \emph{generalised Jacobian}~$J_Y$ of~$Y$ \cite[Ch.~V]{serrealggp} is a semi-abelian variety over~$K$ sitting in a short exact sequence
\begin{equation}
	\label{eq:generalised-jacobian-exact-sequence}
	0 \to T_D \to J_Y \to J \to 0
\end{equation}
with the Jacobian of~$X$ on the right and a torus $T_D$ on the left. The toric part is given by the Weil restriction $T_D = (\Res_{D/K} \bG_{m,D}) / \bG_m$, whose $K$-points are given by
\[ T_D(K) = \cO(D)^\times / K^\times = \bigl(\prod_{Q \in |D|} k(Q)^\times \bigr) \bigm/ K^\times. \]
Here $k(Q)$ denotes the residue field of the cusp~$Q$ and the group $K^\times$ is embedded diagonally. %
The $K$-points of the generalised Jacobian $J_Y$ have the following interpretation: two divisors $E, E'$ on~$Y$ are called \emph{$D$-equivalent} if there is a rational function $f \in k(X)^\times$ with $f\vert_D = 1$ such that $E - E' = \div(f)$. Now $J_Y(K)$ is the group of $D$-equivalence classes of divisors of degree zero on~$Y$:
\[ J_Y(K) = \frac{\Div^0(Y)}{\left\{\div(f) \bigm \vert f \in k(X)^\times, f\vert_D = 1 \right\}}. \]
The map $J_Y \to J$ in the exact sequence~\eqref{eq:generalised-jacobian-exact-sequence} is given by viewing degree zero divisors on~$Y$ as divisors on~$X$ and replacing $D$-equivalence with the finer notion of linear equivalence. 
The inclusion $T_D \to J_Y$ is given on $K$-points as follows: given $a = (a_Q)_Q \in \prod_{Q \in |D|} k(Q)^\times$, let~$f$ be any rational function on~$X$ realising the values $f(Q) = a_Q$ for $Q \in |D|$. Such a function exists by approximation. Then $a \mapsto \div(f)$ in $J_Y(K)$, which is well-defined and descends to $T_D(K)$. 
Given a $K$-rational point $P_0 \in Y(K)$, we have the \emph{Abel--Jacobi map}
\[ \AJ_{P_0}\colon Y \to J_Y. \]
On $K$-points, it is given by mapping $P \in Y(K)$ to the $D$-equivalence class of $[P] - [P_0]$.

\subsection{Intersection numbers}
\label{sec:intersection-numbers}

For the remainder of this section, let $(R,\fm,k)$ be a discrete valuation ring with field of fractions~$K$, let $X$ be a smooth projective curve over~$K$, let $D \neq \emptyset$ be a finite set of closed points of $X$ and set $Y = X \smallsetminus D$. 
In order to construct the $D$-intersection map $\sigma_{\fm}\colon J_Y(K) \to V_{D,\fm}$, we choose a regular model $\cX$ of~$X$ over~$R$, i.e.\ an integral, regular, projective and flat $R$-scheme with an isomorphism $\cX_K \cong X$ \cite[Definition~10.1.1]{liu2006algebraic}. We will show later that the map $\sigma_{\fm}$ does not depend on the chosen model.
Recall from \cite[§III.2]{lang:arakelov} the definition of intersection numbers on the arithmetic surface~$\cX$. If $E,F$ are effective divisors on~$\cX$ without common component and $x$ is a closed point of~$\cX_{\fm}$, the \emph{intersection number} of $E$ and $F$ at~$x$ is defined as
\[ i_x(E,F) \coloneqq \length_{\cO_{\cX,x}}(\cO_{\cX,x}/(f,g)), \]
where~$f$ and~$g$ are defining equations for $E$ and $F$ in a neighbourhood of~$x$. The symbol $i_x$ is bilinear, hence can be extended to not necessarily effective divisors and to $\bQ$-divisors as long as they have no common component in their support. 
The \emph{intersection cycle} of such divisors is defined as the zero-cycle
\[ E . F \coloneqq \sum_x i_x(E,F) [x], \]
on~$\cX_{\fm}$. We also have the $\fm$-intersection number
\begin{equation}
	\label{eq:m-intersection-number}
	i_{\fm}(E,F) \coloneqq E\cdot F \coloneqq \deg_k(E.F) = \sum_x i_x(E,F) [k(x) : k],
\end{equation}
where the sum is over all closed points of~$\cX_{\fm}$. By replacing a divisor with a linearly equivalent one, one can also define the self-intersection number $i_{\fm}(E,E)$ of a vertical prime divisor~$E$. Hence, the symbol $i_{\fm}$ extends to all pairs of divisors on~$\cX$ with no \emph{horizontal} common component \cite[§III.3]{lang:arakelov}.

\begin{defn}
	\label{def:normalisation}
	Let $\tilde \cD \to \Spec(R)$ denote the \emph{normalisation} of $\Spec(R)$ in~$\cO(D)$. Concretely, if $D = \{Q_1,\ldots,Q_r\}$ and if $R_i$ is the integral closure of~$R$ in the residue field~$k(Q_i)$, then 
	\[ \tilde \cD = \coprod_{i=1}^r \Spec(R_i). \]
	For $Q \in |D|$, we write $\tilde \cQ$ for the normalisation of $\Spec(R)$ in~$k(Q)$.
\end{defn}

We need a generalisation of intersection numbers that allows us to ``intersect'' divisors on $\cX$ with $\tilde{\cD}$.
By the valuative criterion of properness, the inclusion $D \hookrightarrow \cX$ extends to a morphism $\nu\colon \tilde \cD \to \cX$. 
Its image is the closure $\cD$ of $D$ in~$\cX$.
If $\cD$ is normal, i.e.\ the Zariski closures of the points $Q \in |D|$ in~$\cX$ are normal and pairwise disjoint, then $\nu$ is just the inclusion of the horizontal divisor $\cD \hookrightarrow \cX$. In general, however, $\nu$ is not a closed immersion. Nevertheless, by a slight abuse of notation, for any closed point $x \in \tilde \cD$ and effective divisor~$E$ on~$\cX$ without common component with~$\cD$, we write
\begin{equation}
	\label{eq:intersection-Dtilde}
	i_x(E,\tilde \cD) \coloneqq \ord_x(\nu^* f)
\end{equation}
with $f$ a defining equation for~$E$ in a neighbourhood of $\nu(x)$. From \cite[Lemma~III.2.2]{lang:arakelov} it follows that this agrees with the ordinary intersection number on~$\cX$ when $\tilde \cD = \cD$. Again, the definition extends to not necessarily effective divisors and to~$\bQ$-divisors~$E$. We also have the intersection cycle
\[ E.\tilde \cD \coloneqq \sum_x i_x(E,\tilde \cD) [x] \in Z_0(\tilde \cD), \]
with $x$ running over all closed points of~$\tilde \cD$.

\subsection{The $D$-intersection map}
\label{sec:intersection map}

We now define the vector space $V_{D,\fm}$ and construct the $D$-intersection map $\sigma_{\fm}\colon J_Y(K) \to V_{D,\fm}$.
Each of the rings $\cO(\tilde \cQ)$ for $Q \in |D|$ is a Dedekind domain by the Krull--Akizuki theorem \cite[Theorem~11.7]{matsumura}. Moreover, the special fibre $\tilde \cQ_{\fm} = \cQ \otimes_R k$ is finite over~$k$. Its points are in bijection with discrete valuations on $k(Q)$ extending the valuation on~$K$. For a closed point $x$ of $\tilde \cQ$, we write $\ord_x\colon k(Q)^\times \twoheadrightarrow \bZ$ for the associated normalised discrete valuation. Denote by $Z_0(\tilde \cD)$ the group of zero-cycles on~$\tilde \cD$, i.e.\ formal linear combinations $\sum n_x [x]$ with $n_x \in \bZ$ and $x$ running through the closed points of~$\tilde \cD$. The special fibre $\tilde \cD_{\fm}$ defines an element 
\[ [\tilde \cD_{\fm}] \coloneqq \sum_x e_x [x] \]
of $Z_0(\tilde \cD)$, with $e_x$ being the ramification index of $\tilde \cD \to \Spec(R)$ at~$x$.

\begin{defn}
	\label{defn:Vm}
	Define the $\bQ$-vector space $V_{D,\fm}$ by
	\[ V_{D,\fm} \coloneqq Z_0(\tilde \cD)/[\tilde \cD_{\fm}] \otimes_{\bZ} \bQ. \]
\end{defn}

We first construct $\sigma_{\fm}$ on the level of degree-0 divisors. Given $F \in \Div^0(Y)$, by \cite[Theorem III.3.6]{lang:arakelov}, there exists a vertical $\bQ$-divisor $\Phi(F)$ on~$\cX$ such that $\Psi(F) \coloneqq \cF + \Phi(F)$ has $\fm$-intersection number zero with every vertical divisor. Here, $\cF$ denotes the extension of~$F$ to a horizontal divisor on~$\cX$ obtained by taking Zariski closures of points. %
Now we define
\begin{align}\label{eq:def-sigma}
\sigma_{\fm}(F) \coloneqq \Psi(F).\tilde \cD = \sum_x i_x(\Psi(F), \tilde \cD) [x] \in V_{D,\fm},
\end{align}
where $x$ runs over all closed points of~$\tilde \cD$. 
The $\bQ$-divisor~$\Phi(F)$ is uniquely determined up to adding $\bQ$-multiples of the complete special fibre~$\cX_{\fm}$, therefore $\sigma_{\fm}(F) = \Psi(F).\tilde \cD$ is well-defined modulo $\cX_{\fm} . \tilde \cD = [\tilde \cD_{\fm}]$, i.e.\ as an element of~$V_{D,\fm}$. The bilinearity of the intersection pairing implies that $\sigma_{\fm}$ is a homomorphism. On principal divisors, $\sigma_{\fm}$ can be computed in terms of evaluation at the cusps and taking orders:

\begin{lemma}
	\label{lem:sigma-of-principal}
	Let $F = \div(f)$ be a principal divisor with support in~$Y$. Then we have
	\[ \sigma_{\fm}(F) = \sum_{Q \in |D|} \sum_{x \in |\tilde \cQ_{\fm}|} \ord_x(f(Q)) [x]. \]
	Here, $f(Q) \in k(Q)^\times$ denotes the evaluation of $f$ at the cusp~$Q$ and $x$ runs over the points of the special fibre of the normalisation~$\tilde \cQ$ of $R$ in $k(Q)$. %
\end{lemma}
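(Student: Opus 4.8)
The plan is to recognise the auxiliary divisor $\Psi(F)$ from~\eqref{eq:def-sigma} as the global principal divisor of~$f$ on~$\cX$, up to a harmless multiple of the special fibre, and then to read off its intersection with~$\tilde\cD$ directly from the definition~\eqref{eq:intersection-Dtilde}. First I would view $f \in k(X)^\times$ as a rational function on~$\cX$ via $\cX_K \cong X$ and split its divisor as $\div_{\cX}(f) = \cF + V_f$, where the horizontal part is exactly the closure $\cF = \overline{\div_X(f)}$ (using that the support of $F$ lies in $Y$, so the multiplicity of $f$ along each horizontal prime meeting $\cD$ is zero) and $V_f$ is vertical. A principal divisor on a regular arithmetic surface has $\fm$-intersection number zero with every vertical divisor \cite[§III.3]{lang:arakelov}; thus $\div_{\cX}(f)$ has this property, which means that $V_f$ is itself a legitimate choice for the correction term $\Phi(F)$. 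By the uniqueness of $\Phi(F)$ up to $\bQ$-multiples of the full fibre $\cX_{\fm}$ \cite[Theorem~III.3.6]{lang:arakelov}, I obtain $\Psi(F) = \div_{\cX}(f) + c\,\cX_{\fm}$ for some $c \in \bQ$.

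Next I would evaluate $\sigma_{\fm}(F) = \Psi(F).\tilde\cD$ in $V_{D,\fm}$. The contribution $c\,\cX_{\fm}.\tilde\cD = c\,[\tilde\cD_{\fm}]$ vanishes in the quotient defining $V_{D,\fm}$, so $\sigma_{\fm}(F) = \div_{\cX}(f).\tilde\cD$ there. Since the support of $\cF$ is disjoint from $D$ in the generic fibre and $V_f$ is vertical, the divisor $\div_{\cX}(f)$ shares no component with the horizontal $\cD$, and $f$ is a global defining equation for it. Hence the definition~\eqref{eq:intersection-Dtilde}, extended by linearity to non-effective divisors, gives $i_x(\div_{\cX}(f),\tilde\cD) = \ord_x(\nu^* f)$ for every closed point $x$ of $\tilde\cD$.

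Finally I would identify $\nu^* f$ with evaluation at the cusps. Writing $\tilde\cD = \coprod_{Q \in |D|} \tilde\cQ$, fix a closed point $x$ lying on the component $\tilde\cQ = \Spec \cO(\tilde\cQ)$, whose fraction field is $k(Q)$. The morphism $\nu$ sends the generic point of $\tilde\cQ$ to the codimension-one point $Q$ of $\cX$, at which $f$ is a unit (again because the support of $F$ avoids $D$); its image under the induced local homomorphism $\cO_{\cX,Q} \to k(Q)$ is precisely the evaluation $f(Q) \in k(Q)^\times$. Therefore $\nu^* f$ equals the element $f(Q)$ of $k(Q) = \mathrm{Frac}(\cO(\tilde\cQ))$, so that $\ord_x(\nu^* f) = \ord_x(f(Q))$. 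Summing over the components $\tilde\cQ$ and their closed points yields exactly the claimed formula.

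The only nontrivial input is the orthogonality of principal divisors to vertical divisors, which is what lets me bypass the explicit construction of $\Phi(F)$ entirely; the rest is an unwinding of the intersection definition together with the observation that pulling $f$ back along $\nu$ coincides with evaluation at the cusp. I expect the main point to verify carefully is this last identification $\nu^* f = f(Q)$, namely that $\nu$ restricts on each $\tilde\cQ$ to a map whose generic point hits $Q$ and whose residual map on $k(Q)$ is the identity, so that no spurious contribution from the vertical part $V_f$ enters.
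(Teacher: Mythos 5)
Your proof is correct and follows essentially the same route as the paper's: both take $\Psi(F)=\div_{\cX}(f)$ (you phrase this via the uniqueness of $\Phi(F)$ modulo $\bQ$-multiples of $\cX_{\fm}$, invoking \cite[Theorem~III.3.1]{lang:arakelov} for the orthogonality of principal divisors to vertical divisors, exactly as the paper does) and then unwind definition~\eqref{eq:intersection-Dtilde}. Your careful verification that $\nu^* f$ equals the evaluation $f(Q)$ on each component $\tilde\cQ$ --- using that $f$ is a unit in the discrete valuation ring $\cO_{\cX,Q}$ because the support of $F$ avoids $D$ --- is a point the paper leaves implicit in the phrase ``splitting the sum over the components,'' and it is correctly handled.
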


\begin{proof}
	Let $\div_{\cX}(f)$ be the divisor on~$\cX$ defined by the rational function~$f$. It has horizontal part~$\cF$ and by \cite[Theorem III.3.1]{lang:arakelov} it has $\fm$-intersection number zero with all vertical divisors, so we can take $\Psi(F) \coloneqq \div_{\cX}(f)$ in the definition of $\sigma_{\fm}(F)$. By definition of the intersection numbers and \eqref{eq:intersection-Dtilde} this implies
	\[ \sigma(F) = \div_{\cX}(f).\tilde \cD = \sum_{x \in |\tilde \cD_{\fm}|} \ord_x(\nu^* f) [x], \]
	and the claim follows by splitting the sum over the components of $\tilde \cD_{\fm} = \coprod_{Q \in |D|} \tilde \cQ_{\fm}$.
\end{proof}

\begin{lemma}
	\label{thm:sigma-well-def}
	The map $\sigma_{\fm}\colon J_Y(K) \to V_{D,\fm}$ defined by \eqref{eq:def-sigma} is well-defined.
\end{lemma}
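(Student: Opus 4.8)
The plan is to show that $\sigma_{\fm}$, which the preceding discussion already establishes as a well-defined homomorphism on the group $\Div^0(Y)$ of degree-zero divisors (the value $\Psi(F).\tilde\cD$ being well-defined in $V_{D,\fm}$ modulo $\cX_{\fm}.\tilde\cD = [\tilde\cD_{\fm}]$), descends along the quotient map defining $J_Y(K)$. Recall from \Cref{sec:generalised-jacobian} that
\[
J_Y(K) = \Div^0(Y) \bigm/ \left\{\div(f) \bigm\vert f \in k(X)^\times,\ f\vert_D = 1\right\}.
\]
Since $\sigma_{\fm}$ is additive, it therefore suffices to verify that $\sigma_{\fm}$ annihilates every principal divisor $F = \div(f)$ arising from a rational function $f$ with $f\vert_D = 1$.

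First I would check that such an $F$ actually lies in the domain of \Cref{lem:sigma-of-principal}, i.e.\ that $\div(f)$ has support in $Y$. The condition $f\vert_D = 1$ means $f(Q) = 1 \in k(Q)^\times$ for every cusp $Q \in |D|$; in particular $f$ is a unit in the local ring of $X$ at each such $Q$, so $\div(f)$ has no component supported on $D$ and hence lies in $\Div^0(Y)$. With this hypothesis in place, \Cref{lem:sigma-of-principal} applies and gives
\[
\sigma_{\fm}(F) = \sum_{Q \in |D|} \sum_{x \in |\tilde\cQ_{\fm}|} \ord_x(f(Q))\,[x].
\]

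The conclusion is then immediate: as $f(Q) = 1$ for every $Q \in |D|$, each inner term $\ord_x(f(Q)) = \ord_x(1) = 0$, so the entire cycle vanishes and $\sigma_{\fm}(F) = 0$ in $V_{D,\fm}$. Thus $\sigma_{\fm}$ factors through $J_Y(K)$, as claimed. I do not expect any genuine obstacle here, since the computational heart of the argument—expressing $\sigma_{\fm}$ on principal divisors through evaluation at the cusps and taking orders—has already been carried out in \Cref{lem:sigma-of-principal}; the only point meriting a moment's care is the verification that $D$-trivial principal divisors satisfy the ``support in $Y$'' hypothesis of that lemma, after which the statement reduces to the trivial evaluation $\ord_x(1) = 0$.
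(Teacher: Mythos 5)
Your proof is correct and follows essentially the same route as the paper: reduce well-definedness to vanishing on principal divisors $\div(f)$ with $f\vert_D = 1$ and apply \Cref{lem:sigma-of-principal} together with $\ord_x(1) = 0$. Your extra verification that $f\vert_D = 1$ forces $\div(f)$ to have support in $Y$ is a point the paper leaves implicit, but it does not change the argument.
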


\begin{proof}
	To see that~$\sigma_{\fm}$ is well-defined on $D$-equivalence classes, assume that $F = \div(f)$ is the principal divisor on~$X$ defined by a function $f \in k(X)^\times$ with $f\vert_D = 1$. Then we have $\ord_x(f(Q)) = \ord_x(1) = 0$ for all $Q \in |D|$ and $x \in |\tilde \cQ_{\fm}|$, hence $\sigma_{\fm}(F) = 0$ by \Cref{lem:sigma-of-principal}.
\end{proof}

\begin{defn}
	\label{def:sigma}
	We call the homomorphism $\sigma_{\fm}\colon J_Y(K) \to V_{D,\fm}$, $[F] \mapsto \Psi(F).\tilde \cD$ of~\Cref{thm:sigma-well-def} the \emph{$D$-intersection map}.
\end{defn}

\begin{prop}
	\label{thm:sigma-independent-of-model}
	The $D$-intersection map $\sigma_{\fm}\colon J_Y(K) \to V_{D,\fm}$ is independent of the choice of regular model~$\cX$.
\end{prop}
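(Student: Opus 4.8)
The target $V_{D,\fm}$ depends only on $\tilde\cD$, which is built from $D$ and $R$ alone, so the issue is purely to show that the \emph{value} $\sigma_{\fm}(F) \in V_{D,\fm}$ is unchanged when we replace $\cX$ by another regular model. The plan is to reduce to a birational comparison and then invoke the projection formula. First I would use that any two regular models $\cX,\cX'$ of $X$ over $R$ are dominated by a common regular model: one takes the closure of the diagonal image of $X$ in $\cX\times_R\cX'$ and resolves its singularities, which is possible for excellent two-dimensional schemes. Composing with the two projections reduces the statement to the case of a single proper birational morphism $\phi\colon\cX'\to\cX$, for which I must show that the map $\sigma_{\fm}'$ built from $\cX'$ agrees with $\sigma_{\fm}$ built from $\cX$.

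Fix $F\in\Div^0(Y)$ and consider the $\bQ$-Cartier pullback $\phi^*\Psi(F)$ on $\cX'$, which is legitimate since $\cX'$ is regular. Its horizontal part is the strict transform of $\cF$, which is exactly the closure $\cF'$ of $F$ on $\cX'$, while the remaining components are vertical. The key point is that $\phi^*\Psi(F)$ is again orthogonal to all vertical divisors: for a vertical prime $V'$ on $\cX'$ the projection formula gives $\phi^*\Psi(F)\cdot V' = \Psi(F)\cdot\phi_*V'$, and $\phi_*V'$ is either zero (if $V'$ is $\phi$-exceptional) or a vertical divisor on $\cX$, so this vanishes by the defining property of $\Psi(F)$. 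Since the vertical correction making $\cF'$ orthogonal to all verticals is unique up to $\bQ$-multiples of the full special fibre $\cX'_{\fm}$ (by the construction recalled before \eqref{eq:def-sigma}), I conclude $\phi^*\Psi(F) = \Psi'(F) + c\,\cX'_{\fm}$ for some $c\in\bQ$.

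It then remains to intersect with $\tilde\cD$ and to see that the correction term disappears in $V_{D,\fm}$. The normalisation maps are compatible, $\nu=\phi\circ\nu'$, since both extend the inclusion $D\hookrightarrow X$ and are determined by the valuative criterion; hence for a local equation $f$ of $\Psi(F)$ near $\nu(x)$ one has $\nu'^*(\phi^*f) = \nu^*f$, giving $\phi^*\Psi(F).\tilde\cD = \Psi(F).\tilde\cD$ as cycles in $Z_0(\tilde\cD)$. On the other hand $\cX'_{\fm}.\tilde\cD = [\tilde\cD_{\fm}]$, because a uniformiser $\pi$ of $R$ is a local equation for $\cX'_{\fm}$ and $\ord_x(\nu'^*\pi)=e_x$. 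Combining these identities yields $\Psi'(F).\tilde\cD = \Psi(F).\tilde\cD - c\,[\tilde\cD_{\fm}]$, which equals $\Psi(F).\tilde\cD$ modulo $[\tilde\cD_{\fm}]$, i.e.\ $\sigma_{\fm}'(F)=\sigma_{\fm}(F)$ in $V_{D,\fm}$, as required.

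I expect the main obstacle to be bookkeeping rather than a deep difficulty: one must justify the $\bQ$-Cartier pullback and the projection formula for the mixture of horizontal and vertical $\bQ$-divisors involved, check that no forbidden common horizontal components arise in the intersection products—here this is automatic, since the horizontal part of $\Psi(F)$ is supported in $Y$ and hence disjoint from $\tilde\cD$—and confirm the existence of the common dominating regular model. All of these are standard facts about regular arithmetic surfaces, so the argument should go through cleanly once they are in place.
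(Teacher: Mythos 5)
Your proposal is correct and follows essentially the same route as the paper's proof: reduce via a common dominating regular model to a single morphism $\pi\colon\cX'\to\cX$, take $\pi^*\Psi(F)$ as the choice of $\Psi'(F)$ (your explicit bookkeeping with the correction term $c\,\cX'_{\fm}$, which dies against $[\tilde\cD_{\fm}]$ in $V_{D,\fm}$, is just a spelled-out version of the paper's appeal to well-definedness of $\sigma'_{\fm}$), verify orthogonality to vertical divisors by the projection formula, and compare intersection cycles pointwise using $\nu = \pi\circ\nu'$ and local equations. The only cosmetic difference is that the paper cites \cite[Proposition~4.2]{lichtenbaum} for the common dominating model rather than re-deriving it from resolution of excellent surfaces as you sketch.
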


\begin{proof}
	Assume that $\cX$ and $\cX'$ are two regular models of~$X$ over~$R$. We want to show that the associated homomorphisms $\sigma_{\fm}, \sigma_{\fm}'\colon J_Y(K) \to V_{D,\fm}$ agree. Since any two regular models can be dominated by a third one \cite[Proposition~4.2]{lichtenbaum}, we may assume that $\cX$ is dominated by~$\cX'$ via a morphism of models $\pi\colon \cX' \to \cX$. Let $\nu\colon \tilde \cD \to \cX$ and $\nu'\colon \tilde \cD \to \cX'$ be the morphisms obtained by extending the inclusion of $D$ into the generic fibre of~$\cX$ and $\cX'$, respectively. The uniqueness statement in the valuative criterion of properness implies $\nu = \pi \circ \nu'$. Now, let~$F$ be a degree-0 divisor on~$Y$, let $\cF$ be its extension to a horizontal divisor on~$\cX$, and let $\Phi(F)$ be a vertical $\bQ$-divisor on~$\cX$ such that $\Psi(F) \coloneqq \cF + \Phi(F)$ has $\fm$-intersection number zero with every vertical divisor. We claim that we can take $\Psi'(F) \coloneqq \pi^* \Psi(F)$ on~$\cX'$ in the definition of~$\sigma_{\fm}'(F)$. Indeed, the horizontal part of $\pi^* \Psi(F)$ is the proper transform of~$\cF$, which equals the horizontal extension~$\cF'$ of~$F$ on~$\cX'$, and for any vertical divisor~$V'$ on~$\cX'$, we have
	\[ i_{\fm}(\pi^* \Psi(F), V') = i_{\fm}(\Psi(F), \pi_* V') = 0 \]
	by the projection formula \cite[Theorem~9.2.12(b)]{liu2006algebraic} and by definition of~$\Psi(F)$. Hence, for any closed point~$x$ on~$\tilde \cD$, if $f \in k(\cX)^\times$ defines $\Psi(F)$ on~$\cX$ near~$\nu(x)$, then $\pi^*f \in k(\cX')^\times$ defines $\Psi'(F)$ near $\nu'(x)$, so we have 
	\[ i_x(\Psi'(F),\tilde \cD) = \ord_x(\nu'^* \pi^*f) = \ord_x(\nu^* f)) = i_x(\Psi(F), \tilde \cD). \]
	Consequently, $\sigma_{\fm}'(F) = \Psi'(F).\tilde \cD = \Psi(F).\tilde \cD = \sigma_{\fm}(F)$.
\end{proof}

\subsection{Computing \texorpdfstring{$\Phi(F)$}{Φ(F)}}
\label{sec:phi}

In the definition of the intersection map, given a divisor $F \in \Div^0(Y)$ with horizontal extension~$\cF$, we need to find a vertical $\bQ$-divisor $\Phi(F)$ such that $\Psi(F) \coloneqq \cF + \Phi(F)$ has $\fm$-intersection number zero with all vertical divisors. We construct $\Phi(F)$ using the intersection matrix of the special fibre.

Let $\Div_{\fm}(\cX)$ be the group of vertical divisors on~$\cX$. We have the reduction component map
\begin{align}
	\label{eq:red-cpt}
	\cpt_{\fm} \colon \Div(X) \to \Div_{\fm}(\cX), \quad F \mapsto \sum_V i_{\fm}(\cF, V) [V], 
\end{align}
where the sum is over all vertical prime divisors. Let $M = (i_{\fm}(V,W))_{V,W}$ be the intersection matrix of the (reduced) special fibre, and let $L^+$ be a generalised inverse of~$-M$, i.e.\ a matrix satisfying $(-M)L^+(-M) = -M$. For example, $L^+$ can be chosen as the Moore--Penrose pseudoinverse of~$-M$. Denote the endomorphism of $\Div_{\fm}(\cX) \otimes \bQ$ represented by~$L^+$ in the standard basis also by~$L^+$.

\begin{lemma}
	\label{lem:phi-formula}
	For $F \in \Div^0(X)$, the vertical $\bQ$-divisor $\Phi(F) \coloneqq L^+(\cpt_{\fm}(F))$ is such that $\Psi(F) \coloneqq \cF + \Phi(F)$ has $\fm$-intersection number zero with all vertical divisors.
\end{lemma}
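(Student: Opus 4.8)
The plan is to translate the required vanishing into linear algebra over the finitely many components of the special fibre and then invoke the defining property of the generalised inverse. Let $V_1,\dots,V_s$ be the vertical prime divisors (the reduced components of $\cX_{\fm}$), let $M=(i_{\fm}(V_i,V_j))_{i,j}$ be the intersection matrix, and let $m=(m_i)_i$ be the multiplicity vector, so $\cX_{\fm}=\sum_i m_i V_i$. Writing $v\coloneqq (i_{\fm}(\cF,V_i))_i$ for the coefficient vector of $\cpt_{\fm}(F)$ and $a\coloneqq L^+v$ for the coefficient vector of $\Phi(F)=L^+(\cpt_{\fm}(F))$, bilinearity of the intersection pairing and symmetry of $M$ give, for each $j$,
\[ i_{\fm}(\Psi(F),V_j)=i_{\fm}(\cF,V_j)+\sum_i a_i\, i_{\fm}(V_i,V_j)=v_j+(Ma)_j. \]
Since every vertical divisor is a $\bQ$-combination of the $V_j$, the lemma is equivalent to the identity $v+ML^+v=0$, and this is what I would aim to establish.

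Next I would record the structure of $M$. It is symmetric, and because $\cX_{\fm}=\div_{\cX}(\pi)$ for a uniformiser $\pi$ of $R$ is principal, it meets every vertical divisor in $\fm$-intersection number zero, so $Mm=0$, i.e.\ $m\in\ker M$. As $\cX$ is a regular model of the (geometrically connected) curve $X$, the special fibre $\cX_{\fm}$ is connected, and the standard negative semi-definiteness result for intersection matrices of connected fibres (see \cite[§III.3]{lang:arakelov}) gives $\ker M=\bQ\,m$. By symmetry of $M$ this yields $\operatorname{im}(M)=(\ker M)^\perp=m^\perp$.

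I would then verify $v\in\operatorname{im}(M)$, for which by the previous step it suffices to check $\langle m,v\rangle=0$. Here
\[ \langle m,v\rangle=\sum_i m_i\, i_{\fm}(\cF,V_i)=i_{\fm}(\cF,\cX_{\fm})=\deg(F), \]
using that intersecting the horizontal divisor $\cF$ with the full special fibre computes the degree on the generic fibre: the closure of a closed point $P$ is finite flat of degree $[k(P):K]$ over $R$, hence meets $\cX_{\fm}$ with multiplicity $[k(P):K]$. As $F\in\Div^0(X)$ we have $\deg(F)=0$, so $v\perp m$ and therefore $v\in\operatorname{im}(M)$. To finish, set $N\coloneqq -M$, so the defining property of the generalised inverse reads $NL^+N=N$ and $\operatorname{im}(N)=\operatorname{im}(M)$. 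Choosing $u$ with $v=Nu$ gives $NL^+v=NL^+Nu=Nu=v$, hence $ML^+v=-v$, and plugging into the formula of the first paragraph yields $i_{\fm}(\Psi(F),V_j)=v_j+(ML^+v)_j=0$ for all $j$.

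The one genuinely substantive point is the membership $v\in\operatorname{im}(M)$: it is forced by the degree-zero hypothesis on $F$ together with $\ker M=\bQ\,m$, and it is exactly this that lets the generalised inverse undo $M$ on $v$ even though $M$ is singular. This also explains why $L^+$ is taken as a generalised inverse of $-M$ rather than of $M$ — the sign is precisely what makes $v+ML^+v$ cancel instead of double — so the only care needed is to keep the bookkeeping $N=-M$ consistent throughout.
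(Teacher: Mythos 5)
Your proof is correct and takes essentially the same route as the paper: both arguments reduce the lemma to showing that $\cpt_{\fm}(F)$ lies in the image of the intersection matrix $M$ — you via $\operatorname{im}(M) = (\ker M)^\perp = m^\perp$ using symmetry together with the corank-$1$ fact from \cite[\S III.3]{lang:arakelov}, the paper via the equivalent observation that $\operatorname{im}(M)$ is exactly the degree-zero subspace for the degree function $\deg(V) = m_V$ — and then conclude with the identical generalised-inverse computation $ML^+v = -v$ for $v \in \operatorname{im}(-M)$. Your two key computations ($Mm = 0$ from $i_{\fm}(\cdot,\cX_{\fm}) = 0$, and $\langle m, v\rangle = i_{\fm}(\cF,\cX_{\fm}) = \deg(F) = 0$) are precisely the paper's statements that $M$ maps into, and $\cpt_{\fm}$ preserves, the degree.
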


\begin{proof}
	The intersection matrix~$M$ represents the endomorphism of $\Div_{\fm}(\cX) \otimes \bQ$ given by $E \mapsto \sum_V i_{\fm}(E,V) [V]$, with~$V$ running over all vertical prime divisors. Let $m_V$ be the multiplicity of~$V$ in the special fibre, so that $\cX_{\fm} = \sum m_V V$. Define a degree function on $\Div_{\fm}(\cX) \otimes \bQ$ by setting $\deg(V) \coloneqq m_V$ and extending linearly. Then the endomorphism~$M$ maps to the degree-zero subspace $\Div_{\fm}^0(\cX) \otimes \bQ$:
	\[ \deg(M(E)) = \sum_V m_V i_{\fm}(E,V) = i_{\fm}(E,\cX_{\fm}) = 0. \]
	Here we used that the complete special fibre~$\cX_{\fm}$ has $\fm$-intersection number zero with any vertical divisor \cite[Proposition~III.3.2]{lang:arakelov}. Since the intersection matrix~$M$ has corank~1 \cite[Lemma~III.3.4]{lang:arakelov}, the image of the endomorphism~$M$ is precisely the degree-zero subspace~$\Div_{\fm}^0(\cX) \otimes \bQ$. (In loc.\ cit., the intersection matrix is defined as $(m_V m_W i_{\fm}(V,W))_{V,W}$ but rescaling the rows and columns does not change the corank.) The reduction component map~$\cpt_{\fm}$ defined in \eqref{eq:red-cpt} preserves the degree:
	\[ \deg(\cpt_{\fm}(F)) = \sum_V m_V i_{\fm}(\cF, V) = i_{\fm}(\cF, \cX_{\fm}) = \deg(F). \]
	The last equality follows from the fact that the horizontal extension of a point~$Q$ of the generic fibre intersects the special fibre in $\deg(Q) = [k(Q) : K]$ many points when counted with multiplicities \cite[Proposition~III.2.5]{lang:arakelov}. 
	
	Now assume that $F$ is a divisor on~$X$ of degree~$0$. Then by the above, its reduction $\cpt_{\fm}(F)$ is contained in the degree-zero subspace of $\Div_{\fm}(\cX)$, hence in the image of the endomorphism~$M$. Since $L^+$ is a generalised inverse of~$-M$, the $\bQ$-divisor $\Phi(F) \coloneqq L^+(\cpt_{\fm}(F))$ satisfies $M(\Phi(F)) = -\cpt_{\fm}(F)$. Unfolding the definitions of~$M$ and $\cpt_{\fm}$, this means that $i_{\fm}(\Phi(F),V) = -i_{\fm}(\cF,V)$ for all vertical prime divisors~$V$. Hence, $\Psi(F) \coloneqq \cF + \Phi(F)$ has $\fm$-intersection number zero with all vertical divisors as claimed.
\end{proof}

\begin{lemma}
	\label{lem:phi-via-red}
	For $F \in \Div^0(X)$, the vertical $\bQ$-divisor $\Phi(F) \in \Div_{\fm}(\cX)/[\cX_{\fm}] \otimes \bQ$ for which $\Psi(F) \coloneqq \cF + \Phi(F)$ has $\fm$-intersection number zero with all vertical divisors depends only on the image of~$F$ under the reduction component map $\cpt_{\fm}$ defined in \eqref{eq:red-cpt}.
\end{lemma}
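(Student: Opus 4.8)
The plan is to translate the characterizing property of $\Phi(F)$ into a single inhomogeneous linear equation whose right-hand side is $\cpt_{\fm}(F)$, and then to show that this equation determines its solution uniquely once we pass to the quotient by the complete special fibre.

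First I would recall the matrix formulation already used in the proof of \Cref{lem:phi-formula}: writing $M$ for the endomorphism of $\Div_{\fm}(\cX) \otimes \bQ$ given by $E \mapsto \sum_V i_{\fm}(E,V)[V]$, the requirement that $\Psi(F) = \cF + \Phi(F)$ have $\fm$-intersection number zero with every vertical prime divisor is exactly the linear equation
\[ M(\Phi(F)) = -\cpt_{\fm}(F). \]
Its right-hand side manifestly depends on $F$ only through $\cpt_{\fm}(F)$, so the set of all $\bQ$-divisors $\Phi$ satisfying the characterizing property is a coset of $\ker(M)$ that depends on $\cpt_{\fm}(F)$ alone. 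The lemma therefore reduces to identifying $\ker(M)$ with the line spanned by~$\cX_{\fm}$.

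The key step is to prove $\ker(M) = \bQ \cdot \cX_{\fm}$. The inclusion ``$\supseteq$'' holds because the complete special fibre has $\fm$-intersection number zero with every vertical divisor \cite[Proposition~III.3.2]{lang:arakelov}, i.e.\ $M(\cX_{\fm}) = 0$; moreover $\cX_{\fm} = \sum_V m_V V \neq 0$ since all multiplicities $m_V$ are positive. For ``$\subseteq$'' I would invoke that $M$ has corank~$1$ \cite[Lemma~III.3.4]{lang:arakelov}, so $\ker(M)$ is one-dimensional and hence exhausted by~$\cX_{\fm}$. Feeding this back, any two solutions $\Phi, \Phi'$ of the characterizing equation for the same~$F$ differ by a rational multiple of~$\cX_{\fm}$ and thus define the same class in $\Div_{\fm}(\cX)/[\cX_{\fm}] \otimes \bQ$; since the equation sees only $\cpt_{\fm}(F)$, this class is a function of $\cpt_{\fm}(F)$, as claimed.

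I expect no genuine obstacle, as the argument is formal once the kernel has been computed. The only point requiring a little care is that the corank-$1$ statement of \cite[Lemma~III.3.4]{lang:arakelov} is phrased for the rescaled matrix $(m_V m_W\, i_{\fm}(V,W))_{V,W}$; but, as already noted in the proof of \Cref{lem:phi-formula}, rescaling rows and columns does not change the corank, so it applies equally to~$M$.
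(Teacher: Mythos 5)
Your proof is correct and is essentially the paper's argument unpacked: the paper simply deduces the lemma as an immediate consequence of \Cref{lem:phi-formula}, whose proof already contains the matrix equation $M(\Phi(F)) = -\cpt_{\fm}(F)$ and the corank-$1$ fact \cite[Lemma~III.3.4]{lang:arakelov} that you use. Your only deviation is stylistic — you characterize the full solution set as a coset of $\ker(M) = \bQ \cdot \cX_{\fm}$ rather than quoting the explicit solution $L^+(\cpt_{\fm}(F))$, which makes the uniqueness up to multiples of $\cX_{\fm}$ (stated without proof after \eqref{eq:def-sigma} in the paper) explicit.
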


\begin{proof}
	This is an immediate consequence of \Cref{lem:phi-formula}.
\end{proof}

In light of \Cref{lem:phi-via-red}, we also write $\Phi(\cpt_{\fm}(F))$ instead of $\Phi(F)$. More generally:

\begin{defn}
	\label{def:phi-on-components}
	Given a vertical $\bQ$-divisor $E \in \Div_{\fm}^0(\cX)$ of degree zero (with respect to the degree function in the proof of \Cref{lem:phi-formula}), define $\Phi(E) \coloneqq L^+(E) \in \Div_{\fm}(\cX) \otimes \bQ$, so that $\Phi(E)$ is a vertical $\bQ$-divisor satisfying $i_{\fm}(\Phi(E), V) = -\mathrm{mult}_V(E)$ for all vertical divisors~$V$. Note that $\Phi(E)$ is well-defined up to adding $\bQ$-multiples of $\cX_{\fm}$.
\end{defn}

Another consequence of \Cref{lem:phi-formula} is that the denominators of the coefficients of the $\bQ$-divisors $\Phi(F)$ are bounded.
Namely, let $a$ be the least common denominator of all entries of $L^+$.
For an abelian group $B$ let $B_{\tors}$ be its torsion subgroup and $B_{\tf} \coloneqq B/B_{\tors}$.
With $\Phi$ defined as in \Cref{lem:phi-formula}, the $D$-intersection map $\sigma_{\fm}$ in fact takes values in the finitely generated free abelian group
\begin{equation}
\label{def:Z-structure-VD}
	A_{D,\mathfrak{m}} \coloneqq \frac{1}{a} \left( Z_0(\tilde \cD)/[\tilde \cD_{\fm}]\right)_{\tf} \subseteq V_{D,\mathfrak{m}}.
\end{equation}
The $\bZ$-structure $A_{D,\mathfrak{m}}$ of the $\bQ$-vector space $V_{D,\fm}$ will be useful in \S\ref{sec:comparison-with-kummer-map}. 

\subsection{Integral reduction types}
\label{sec:integral-reduction-types}

Let $\cX$ be a regular model of $X$ over~$R$. Let $\cD$ be the closure of~$D$ in~$\cX$ and set $\cY = \cX \smallsetminus \cD$.
Any $K$-rational point $P \in Y(K)$ extends uniquely to an $R$-valued point of~$\cX$ by properness of~$\cX$. Reducing this point modulo~$\fm$ yields a $k$-point of the special fibre. This point is contained in the smooth locus $\cX_{\fm}^{\sm}$ of~$\cX_{\fm}$, see \cite[Lemma~3.1]{liu-tong:neron-models}.

\begin{defn}
	\label{def:reduction-map}
	The composition
	\[ \red_{\fm}\colon Y(K) \hookrightarrow X(K) = \cX(R) \to \cX_{\fm}^{\sm}(k) \]
	sending a $K$-rational point of~$Y$ to its reduction in the special fibre of~$\cX$ is called the \emph{mod-$\fm$ reduction map}.
\end{defn}

A $K$-point of~$Y$ belongs to the subset of $R$-integral points $\cY(R) \subseteq Y(K)$ if and only if its mod-$\fm$ reduction does not lie on~$\cD_{\fm}$. The \emph{reduction type} of such a point remembers the component of the special fibre onto which it reduces.

\begin{defn}
	\label{def:integral-reduction-type}
	An \emph{integral mod-$\fm$ reduction type} for $(\cX,\cD)$ is an irreducible (equivalently: connected) component of $\cX_{\fm}^{\sm} \smallsetminus \cD_{\fm}$ that contains at least one smooth $k$-point. The set of all integral mod-$\fm$ reduction types is denoted by $\cC_{\fm}^0$. We say that an $R$-integral point $P$ of~$\cY$ has reduction type~$\Sigma_{\fm}$ if its mod-$\fm$ reduction lies on~$\Sigma_{\fm}$. The set of all $R$-integral points of reduction type~$\Sigma_{\fm}$ is denoted by $\cY(R)_{\Sigma_{\fm}}$.
\end{defn}

Integral reduction types define a partition of $\cY(R)$:
\[ \cY(R) = \coprod_{\Sigma_{\fm} \in \cC^0_{\fm}} \cY(R)_{\Sigma_{\fm}}. \]

\begin{rem}
	\label{rem:integral-reduction-types-betts}
	Our notion of integral reduction type differs slightly from the reduction types considered in \cite[§6.1.1]{betts:effective}. By only admitting components of the smooth locus~$\cX_{\fm}^{\sm}$ of the special fibre we exclude all components of multiplicity larger than one. The reason for this is that only components of the smooth locus can occur as reduction types of $R$-integral points. 
\end{rem}

The reduction type of a point puts strong constraints on its image under the Abel--Jacobi map.

\begin{prop}
	\label{thm:abel-jacobi-integral-reduction-type}
	Let $P_0\in Y(K)$.
	Then the image of an $R$-integral point of~$\cY$ under the composition
	$\sigma_{\fm}\circ\AJ_{P_0}\colon \cY(R) \to J_Y(K) \to V_{D,\fm}$
	depends only on its reduction type.
More precisely, if $P \in \cY(R)$ has reduction type $\Sigma_{\fm} \in \cC_{\fm}^0$ then %
	\begin{equation}
	\label{eq:sigma-on-integral-reduction-type}
	\sigma_{\fm}(P-P_0) = -\cP_0.\tilde \cD + \Phi(\Sigma_{\fm} - \cpt_{\fm}(P_0)).\tilde\cD.
	\end{equation}
\end{prop}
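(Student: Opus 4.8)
The plan is to evaluate $\sigma_{\fm}$ directly on the degree-zero divisor $F \coloneqq [P] - [P_0] \in \Div^0(Y)$ representing $\AJ_{P_0}(P)$, and to read off that the result sees only the reduction type of $P$. Writing $\cF = \cP - \cP_0$ for the horizontal extensions and recalling $\Psi(F) = \cF + \Phi(F)$ from \eqref{eq:def-sigma}, the bilinearity of the intersection pairing gives
\begin{equation*}
	\sigma_{\fm}(P - P_0) = \cP.\tilde\cD - \cP_0.\tilde\cD + \Phi(F).\tilde\cD.
\end{equation*}
It then suffices to show that (i) the term $\cP.\tilde\cD$ vanishes, and (ii) $\cpt_{\fm}(F) = \Sigma_{\fm} - \cpt_{\fm}(P_0)$, since then \Cref{lem:phi-via-red} allows us to replace $\Phi(F).\tilde\cD$ by $\Phi(\Sigma_{\fm} - \cpt_{\fm}(P_0)).\tilde\cD$, yielding exactly \eqref{eq:sigma-on-integral-reduction-type}. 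The right-hand side of that formula depends only on $\Sigma_{\fm}$ and the fixed base point $P_0$, so the assertion that the image depends only on the reduction type follows at once.

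For (i), I would use that $P$ is an $R$-integral point of $\cY$: by definition its reduction $\red_{\fm}(P)$ lies in $\cX_{\fm}^{\sm} \smallsetminus \cD_{\fm}$, hence not on $\cD_{\fm}$. The section $\cP$ meets the special fibre only at $\red_{\fm}(P)$, and $P \notin D$ ensures $\cP$ and $\cD$ share no horizontal component. Every closed point $x$ of $\tilde\cD$ lies over $\fm$ and maps under $\nu$ into $\cD_{\fm}$, so $\nu(x) \neq \red_{\fm}(P)$; a local equation $f$ for $\cP$ near $\nu(x)$ is therefore a unit, and $i_x(\cP, \tilde\cD) = \ord_x(\nu^* f) = 0$ by \eqref{eq:intersection-Dtilde}. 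Thus $\cP.\tilde\cD = 0$ already as a cycle.

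For (ii), the point $P$ is $K$-rational, so $\cP$ is a section and $\red_{\fm}(P)$ is a $k$-rational smooth point of $\cX_{\fm}$; being smooth it lies on a single irreducible component $V$ of the special fibre, which by the smooth-locus restriction in \Cref{def:integral-reduction-type} (cf.\ \Cref{rem:integral-reduction-types-betts}) occurs with multiplicity one. Near this point $V$ is cut out by a uniformiser of $R$, so transversality of the section yields local intersection number one, and $k$-rationality of $\red_{\fm}(P)$ then gives $i_{\fm}(\cP, V) = 1$, while $i_{\fm}(\cP, W) = 0$ for every other vertical prime $W$. Hence $\cpt_{\fm}(P) = [V] = \Sigma_{\fm}$ under the identification of the reduction type with its ambient component, and by linearity of the reduction component map \eqref{eq:red-cpt} we obtain $\cpt_{\fm}(F) = \Sigma_{\fm} - \cpt_{\fm}(P_0)$ (which has degree zero as required, since $\deg \cpt_{\fm}(F) = \deg F = 0$).

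The routine ingredients are the vanishing in (i) and the linearity bookkeeping; I expect the only genuine subtlety to be the multiplicity computation $i_{\fm}(\cP, V) = 1$ in (ii). This is precisely the step that forces the restriction of reduction types to the smooth locus: it combines $K$-rationality of $P$ (so that the reduction point is $k$-rational and $\cP$ is a section), smoothness of $\red_{\fm}(P)$ (so that it lies on a unique component), and multiplicity one of that component (so that transversality yields intersection number exactly one, consistently with $\deg \cpt_{\fm}(P) = \deg(P) = 1$). Everything else is the linearity of $\sigma_{\fm}$ together with the model-independence already established in \Cref{thm:sigma-independent-of-model}.
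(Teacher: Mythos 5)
Your proposal is correct and follows essentially the same route as the paper's proof: the same three-term expansion $\sigma_{\fm}(P-P_0) = \cP.\tilde\cD - \cP_0.\tilde\cD + \Phi(P-P_0).\tilde\cD$, the vanishing of $\cP.\tilde\cD$ from $R$-integrality, and the reduction of $\Phi(P-P_0)$ to $\Phi(\Sigma_{\fm}-\cpt_{\fm}(P_0))$ via \Cref{lem:phi-via-red}. The only (harmless) difference is cosmetic: where the paper deduces $i_{\fm}(\cP,\Sigma_{\fm})=1$ from $i_{\fm}(\cP,\cX_{\fm})=[k(P):K]=1$ by citing \cite[Proposition~2.5]{lang:arakelov}, you verify the same multiplicity directly by noting that near the smooth $k$-rational reduction point the component is cut out by a uniformiser of $R$, which restricts to a uniformiser on the section $\cP \cong \Spec(R)$.
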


\begin{proof}
	Let $P$ be an $R$-integral point of reduction type $\Sigma_{\fm} \in \cC_{\fm}^0$. Let $\cP$ and $\cP_0$ be the horizontal divisors on $\cX$ given by taking the Zariski closure of~$P$ and~$P_0$, respectively. We calculate
	\begin{align*}
	\sigma_{\fm}(P-P_0) = \Psi(P-P_0).\tilde\cD = \cP.\tilde \cD - \cP_0.\tilde \cD + \Phi(P-P_0).\tilde\cD.
	\end{align*}
	Since $P$ is an $R$-integral point, its reduction does not lie on~$\cD$, so the first summand vanishes. The second summand is independent of~$P$. The third summand, by \Cref{lem:phi-via-red}, depends on~$P$ only through its reduction component~$\cpt_{\fm}(P)$. By assumption, $P$ reduces onto the component given by the reduction type~$\Sigma_{\fm}$, and the $\fm$-intersection number of $\cP$ with this component equals~1 since $i_{\fm}(\cP,\cX_{\fm}) = [k(P) : K] = 1$, see \cite[Proposition~2.5]{lang:arakelov}. So we have $\cpt_{\fm}(P) = \Sigma_{\fm}$, viewing $\Sigma_{\fm}$ as an element of~$\Div_{\fm}(\cX)$ in the obvious way. According to \Cref{lem:phi-formula}, we can take
	\[ \Phi(P-P_0) = L^+(\cpt_{\fm}(P-P_0)) = \Phi(\Sigma_{\fm} - \cpt_{\fm}(P_0)), \]
	which gives the claimed formula~\eqref{eq:sigma-on-integral-reduction-type}.
\end{proof}

\begin{defn}
	\label{def:local-selmer-set-integral}
	Let $P_0 \in Y(K)$ be a base point and let $\Sigma_{\fm} \in \cC_{\fm}^0$ be an integral mod-$\fm$ reduction type for $(\cX,\cD)$. We define $\fS_{\fm}(P_0,\Sigma_{\fm})$ to be the one-element subset of $V_{D,\fm}$ containing the right hand side of \eqref{eq:sigma-on-integral-reduction-type}, i.e.
	\begin{equation}
	\label{eq:Sm-integral}
	\fS_{\fm}(P_0,\Sigma_{\fm}) =\bigl\{ -\cP_0.\tilde \cD + \Phi(\Sigma_{\fm} - \cpt_{\fm}(P_0)).\tilde\cD \bigr\}.
	\end{equation}
\end{defn}

According to \Cref{thm:abel-jacobi-integral-reduction-type}, we have
\[ (\sigma_{\fm} \circ \AJ_{P_0})(P) \in \fS_{\fm}(P_0,\Sigma_{\fm}) \]
for all $P \in \cY(R)$ of reduction type~$\Sigma_{\fm}$.

\begin{prop}
	\label{thm:Sm-integral-zero}
	Let $P_0 \in Y(K)$ be a base point and let $\Sigma_{\fm} \in \cC_{\fm}^0$ be an integral mod-$\fm$ reduction type. If $P_0$ is itself an $R$-integral point of reduction type~$\Sigma_{\fm}$, then $\fS_{\fm}(P_0,\Sigma_{\fm}) = \{0\}$.
\end{prop}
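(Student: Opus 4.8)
The plan is to read the claim off directly from \Cref{thm:abel-jacobi-integral-reduction-type} by applying that proposition to the base point itself. By \Cref{def:local-selmer-set-integral}, the set $\fS_{\fm}(P_0,\Sigma_{\fm})$ is the singleton whose unique element is the right-hand side of the formula \eqref{eq:sigma-on-integral-reduction-type}, namely $-\cP_0.\tilde\cD + \Phi(\Sigma_{\fm} - \cpt_{\fm}(P_0)).\tilde\cD$. This element depends only on $P_0$ and $\Sigma_{\fm}$; the whole point is that the hypothesis lets us also view $P_0$ as a point $P$ of reduction type $\Sigma_{\fm}$ to which the proposition applies.

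Concretely, under the hypothesis $P_0 \in \cY(R)$ has reduction type $\Sigma_{\fm}$, so \Cref{thm:abel-jacobi-integral-reduction-type} gives
\[
\sigma_{\fm}(P_0 - P_0) = -\cP_0.\tilde\cD + \Phi(\Sigma_{\fm} - \cpt_{\fm}(P_0)).\tilde\cD.
\]
Since $\sigma_{\fm}$ is a group homomorphism and $P_0 - P_0$ represents the trivial class in $J_Y(K)$, the left-hand side is $0$. Hence the unique element of $\fS_{\fm}(P_0,\Sigma_{\fm})$ is $0$, which is exactly the assertion $\fS_{\fm}(P_0,\Sigma_{\fm}) = \{0\}$.

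If one prefers a self-contained verification not routed through the formula \eqref{eq:sigma-on-integral-reduction-type}, I would instead check that the two summands vanish separately. For the second summand, the argument in the proof of \Cref{thm:abel-jacobi-integral-reduction-type} shows $\cpt_{\fm}(P_0) = \Sigma_{\fm}$: the identity $i_{\fm}(\cP_0,\cX_{\fm}) = [k(P_0):K] = 1$ forces $\cP_0$ to meet the special fibre only along the component $\Sigma_{\fm}$ and with multiplicity one. Thus $\Sigma_{\fm} - \cpt_{\fm}(P_0) = 0$ and therefore $\Phi(\Sigma_{\fm} - \cpt_{\fm}(P_0)) = L^+(0) = 0$ by \Cref{def:phi-on-components}. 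For the first summand, $R$-integrality of $P_0$ means the horizontal divisor $\cP_0$ meets no point of $\cD_{\fm}$, so a local equation $f$ for $\cP_0$ near each $\nu(x)$ with $x \in \tilde\cD$ is a unit, whence $i_x(\cP_0,\tilde\cD) = \ord_x(\nu^* f) = 0$ and $\cP_0.\tilde\cD = 0$. Either way there is no genuine obstacle here: the entire content is that $P_0$ simultaneously plays the role of the base point and of an admissible integral point of the prescribed reduction type, so the Abel--Jacobi image of $P_0 - P_0$ must be the zero class.
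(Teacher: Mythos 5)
Your proof is correct, and your primary route is genuinely different from the paper's. The paper verifies the two summands of \eqref{eq:Sm-integral} vanish directly: $R$-integrality of $P_0$ means $\cP_0$ does not meet $\cD$, so $\cP_0.\tilde\cD = 0$, and $\cpt_{\fm}(P_0) = \Sigma_{\fm}$ allows the choice $\Phi(\Sigma_{\fm} - \cpt_{\fm}(P_0)) = \Phi(0) = 0$ --- exactly your ``self-contained'' fallback argument, down to the same two observations. Your main argument instead specialises \Cref{thm:abel-jacobi-integral-reduction-type} to $P = P_0$ (legitimate, since the proposition places no restriction $P \neq P_0$ and the hypothesis makes $P_0$ an admissible point of reduction type $\Sigma_{\fm}$) and notes that $\sigma_{\fm}(P_0 - P_0) = \sigma_{\fm}(0) = 0$ because $\sigma_{\fm}$ is a well-defined homomorphism on $J_Y(K)$ (\Cref{thm:sigma-well-def}), forcing the unique element of $\fS_{\fm}(P_0,\Sigma_{\fm})$ to be zero. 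This is slicker and more robust: it requires no fresh geometric input, only that the formula \eqref{eq:sigma-on-integral-reduction-type} has already been established, and it makes transparent that the proposition is a pure formal consequence of the earlier one. What the paper's route buys in exchange is a self-contained verification exhibiting the concrete geometric reasons ($\cP_0$ misses $\cD$; the correction divisor can be taken to be zero), which is arguably more instructive for later computations. One minor point worth making explicit in your version: the summand $\Phi(\Sigma_{\fm}-\cpt_{\fm}(P_0)).\tilde\cD$ is only well-defined up to $\bQ$-multiples of $\cX_{\fm}.\tilde\cD = [\tilde\cD_{\fm}]$, which vanishes in $V_{D,\fm}$ by definition --- this is why ``the unique element'' of the singleton is unambiguous, and your argument silently relies on it just as the paper does.
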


\begin{proof}
	If $P_0$ is $R$-integral then its closure $\cP_0$ does not meet the closure~$\cD$ of~$D$ in~$\cX$, so that we have $\cP_0.\tilde\cD = 0$. If moreover $P_0$ has reduction type~$\Sigma_{\fm}$, then $\cpt_{\fm}(P_0) = \Sigma_{\fm}$, hence we can choose $\Phi(\Sigma_{\fm} - \cpt_{\fm}(P_0)) = \Phi(0) = 0$, which implies $\fS_{\fm}(P_0,\Sigma_{\fm}) = \{0\}$ by \eqref{eq:Sm-integral}.
\end{proof}

It can happen that the composition $\sigma_{\fm}\circ\AJ_{P_0}\colon \cY(R) \to J_Y(K) \to V_{D,\fm}$
is constant on all of $\cY(R)$, not only on points of a fixed reduction type. The following proposition gives a criterion for when this occurs.

\begin{prop}
	\label{selmer-sets-constant}
	Assume that $\cD$ intersects the special fibre $\cX_{\fm}$ in only one component. Then the set $\fS_{\fm}(P_0,\Sigma_{\fm})$ is equal to $\{-\cP_0.\tilde\cD\}$, independently of the choice of integral reduction type~$\Sigma_{\fm}$.
\end{prop}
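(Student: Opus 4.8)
The plan is to reduce the statement to a single vanishing in $V_{D,\fm}$ and then exploit the defining relation $\cX_{\fm}.\tilde\cD = [\tilde\cD_{\fm}]$. By \eqref{eq:Sm-integral} the set $\fS_{\fm}(P_0,\Sigma_{\fm})$ is the singleton
\[ \fS_{\fm}(P_0,\Sigma_{\fm}) = \bigl\{ -\cP_0.\tilde\cD + \Phi(\Sigma_{\fm} - \cpt_{\fm}(P_0)).\tilde\cD \bigr\}, \]
and only the second summand depends on the reduction type~$\Sigma_{\fm}$. Hence it suffices to show that $\Phi(\Sigma_{\fm} - \cpt_{\fm}(P_0)).\tilde\cD = 0$ in $V_{D,\fm}$. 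Since $\Phi(\Sigma_{\fm} - \cpt_{\fm}(P_0))$ is a vertical $\bQ$-divisor on~$\cX$, I would in fact prove the stronger claim that \emph{every} vertical $\bQ$-divisor~$\Phi$ satisfies $\Phi.\tilde\cD = 0$ in $V_{D,\fm}$ under the hypothesis of the proposition.

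For the key computation, let $V_0$ be the unique component of~$\cX_{\fm}$ that~$\cD$ meets. The hypothesis says that $\nu(\tilde\cD)$ specialises into~$V_0$ and avoids the remaining components, so for every vertical prime divisor $V \neq V_0$ a local equation for~$V$ is a unit along $\nu(\tilde\cD)$, whence $V.\tilde\cD = 0$ by~\eqref{eq:intersection-Dtilde}. Writing $\cX_{\fm} = \sum_V m_V V$ and using the identity $\cX_{\fm}.\tilde\cD = [\tilde\cD_{\fm}]$ recalled after~\eqref{eq:def-sigma}, only the $V_0$-term survives, so
\[ m_{V_0}\,(V_0.\tilde\cD) = \cX_{\fm}.\tilde\cD = [\tilde\cD_{\fm}], \qquad\text{i.e.}\qquad V_0.\tilde\cD = \tfrac{1}{m_{V_0}}[\tilde\cD_{\fm}]. \]
Consequently, for any vertical $\bQ$-divisor $\Phi = \sum_V c_V V$ we obtain, by bilinearity, $\Phi.\tilde\cD = c_{V_0}(V_0.\tilde\cD) = \tfrac{c_{V_0}}{m_{V_0}}[\tilde\cD_{\fm}]$, a $\bQ$-multiple of~$[\tilde\cD_{\fm}]$, which vanishes in $V_{D,\fm} = Z_0(\tilde\cD)/[\tilde\cD_{\fm}] \otimes_{\bZ} \bQ$.

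Applying this to $\Phi = \Phi(\Sigma_{\fm} - \cpt_{\fm}(P_0))$ gives $\Phi(\Sigma_{\fm} - \cpt_{\fm}(P_0)).\tilde\cD = 0$ in $V_{D,\fm}$, so $\fS_{\fm}(P_0,\Sigma_{\fm}) = \{-\cP_0.\tilde\cD\}$ independently of~$\Sigma_{\fm}$, as claimed. I expect the only real subtlety to be making the hypothesis ``$\cD$ intersects $\cX_{\fm}$ in only one component'' precise enough to guarantee $V.\tilde\cD = 0$ for $V \neq V_0$, i.e.\ that~$\cD$ specialises into the interior of~$V_0$ and not into a node shared with another component; once this is granted, the vanishing is forced by bilinearity and the relation $\cX_{\fm}.\tilde\cD = [\tilde\cD_{\fm}]$.
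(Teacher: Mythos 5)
Your proof is correct and follows essentially the same route as the paper's: reduce via \eqref{eq:Sm-integral} to showing $\Phi.\tilde\cD = 0$ in $V_{D,\fm}$ for every vertical $\bQ$-divisor, observe $V.\tilde\cD = 0$ for components $V$ not meeting $\cD$, and use $\cX_{\fm}.\tilde\cD = [\tilde\cD_{\fm}]$ to get $V_0.\tilde\cD = \tfrac{1}{m_{V_0}}[\tilde\cD_{\fm}] \equiv 0$. The subtlety you flag at the end is vacuous: a point of $\cD \cap \cX_{\fm}$ lying on a node shared by two components would mean $\cD$ intersects both, contradicting the hypothesis, so the hypothesis as stated already forces $\cD$ to meet only $V_0$ and hence $V.\tilde\cD = 0$ for all $V \neq V_0$.
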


\begin{proof}
	According to \eqref{eq:Sm-integral}, $\fS_{\fm}(P_0,\Sigma_{\fm})$ is the singleton set containing an element of the form $-\cP_0.\tilde\cD + V.\tilde \cD$ where $V$ is a vertical $\bQ$-divisor depending on~$\Sigma_{\fm}$. We claim that $V.\tilde\cD = 0$ in $V_{D,\fm}$ for every vertical divisor~$V$. We may assume that $V$ is a prime divisor, i.e.\ a component of the special fibre. Let $m_V$ be the multiplicity of~$V$ in~$\cX_{\fm}$. By assumption, either $\cD$ does not intersect~$V$, or all points of the intersection $\cD \cap \cX_{\fm}$ are contained in~$V$. In the former case, we trivially have $V.\tilde \cD = 0$, while in the latter case we have
	\[ V.\tilde \cD = \tfrac1{m_V} \cX_{\fm}.\tilde\cD = \tfrac1{m_V} [\tilde\cD_{\fm}], \]
	but in $V_{D,\fm}$ this is zero as well.
\end{proof}

\subsection{$D$-transversal models}
\label{sec:normal-models}

Let $\cX$ be a regular model of~$X$ over~$R$, and let $\cD$ and $\cY$ be as before.
We also want to partition the set $Y(K)$, not just $\cY(R)$, by reduction type.
But $P\in Y(K)$ may reduce to the reduction of a cusp.
If this happens, we want to record the cusp to which $P$ reduces, so we need the model $\cX$ to be $D$\emph{-transversal}:

\begin{defn}
	\label{def:D-normal}
	\leavevmode
	\begin{enumerate}
		\item Let $x$ be a point in $(\cX_{\fm}^{\sm} \cap \cD)(k)$. We say that $\cX$ is \emph{$D$-transversal at~$x$} if $i_x(\cX_{\fm}, \cD) = 1$.
		\item We say that $\cX$ or $(\cX,\cD)$ is \emph{$D$-transversal} if it is $D$-transversal at all $x\in (\cX_{\fm}^{\sm} \cap \cD)(k)$.
	\end{enumerate}
\end{defn}

By \cite[Lemma 9.1.8(b)]{liu2006algebraic}, if the model $\cX$ is $D$-transversal at $x \in (\cX_{\fm}^{\sm} \cap \cD)(k)$ then $\cD$ is regular (equivalently, normal) at $x$. In particular, no two components of~$\cD$ intersect at~$x$, so $x$ lies in the closure of a unique point $Q \in |D|$. Note that~$\cD$ may be non-normal at points where it intersects the special fibre with higher multiplicity (e.g., singular points of~$\cX_{\fm}$) or at points which are not $k$-rational. However, $K$-points of $Y$ reduce to smooth $k$-rational points of the special fibre, so a transversality condition at these points is all we need.

\begin{rem}
	\label{rem:rnc-models}
	\cite[Appendix B]{Bet18} defines a regular normal crossings (RNC) model~$\cX$ as a regular model for which $\cD + \cX_{\fm}$ is a normal crossings divisor on~$\cX$. Every RNC model is $D$-transversal, but the converse is not true.
\end{rem}

\begin{prop}
	\label{thm:D-normal-desingularisation}
	If $R$ is excellent, then any regular model $(\cX,\cD)$ is dominated by a $D$-transversal regular model.
\end{prop}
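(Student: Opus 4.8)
The plan is to produce the required model by embedded resolution of singularities and then to invoke \Cref{rem:rnc-models}. First I would note that the locus where $(\cX,\cD)$ fails to be $D$-transversal is finite: it is contained in $\cD\cap\cX_{\fm}$, the intersection of a horizontal and a vertical divisor on the arithmetic surface, hence a finite set of closed points, all lying on the special fibre. Locally at such a point $x\in(\cX_{\fm}^{\sm}\cap\cD)(k)$ the ring $\cO_{\cX,x}$ is a two-dimensional regular local ring in which $\cX_{\fm}=V(\pi)$ for a uniformiser $\pi$ of $R$ and $\cD=V(g)$, and since $x$ is smooth on $\cX_{\fm}$ we may complete $\pi$ to a regular system of parameters $(\pi,t)$. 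Then $i_x(\cX_{\fm},\cD)=\ord_t(g\bmod\pi)$, so non-transversality means precisely that $g\bmod\pi$ vanishes to order $\geq 2$ at $x$. The goal is therefore to modify $\cX$ over these finitely many points so that the closure of $D$ meets the special fibre transversally at every smooth point.

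The main step is to apply embedded resolution to the reduced divisor $\cD\cup\cX_{\fm}$ on the regular excellent surface $\cX$. Because $R$ is excellent and $\cX$ is regular and projective over $R$, there is a finite sequence of blow-ups at closed points $\cX'\to\dots\to\cX$ such that $\cX'$ is regular and the reduced total transform of $\cD+\cX_{\fm}$ is a normal crossings divisor; see \cite[Ch.~9]{liu2006algebraic} and, for the RNC formulation, \cite[Appendix~B]{Bet18} (for excellent two-dimensional schemes this is due to Lipman). Each blow-up is centred at a closed point of a regular surface, so it preserves regularity and is projective; moreover the divisor $\cD+\cX_{\fm}$ already has normal crossings along the generic fibre (there it restricts to the reduced divisor $D$ on the smooth curve $X$), so every centre lies on the special fibre and $\cX'\to\cX$ is an isomorphism over~$X$. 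Hence $\cX'$ is again a regular model of $X$, and it dominates $\cX$. Writing $\cD'$ for the closure of $D$ in $\cX'$, the pair $(\cX',\cD')$ is an RNC model in the sense of \Cref{rem:rnc-models}, and by that remark it is $D$-transversal. This yields the desired $D$-transversal regular model dominating $\cX$.

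The real difficulty lies in the termination of the resolution, and this is exactly where excellence is needed. It is tempting to try a direct induction: blow up the finitely many non-transversal points and track the numbers $i_x(\cX_{\fm},\cD)$. On a regular surface one computes $i_x(\cX_{\fm},\cD)=m_x(\cD)+\sum_{x'\mapsto x} i_{x'}(\widetilde{\cX_{\fm}},\tilde\cD)$, from which the total defect $\sum_x(i_x(\cX_{\fm},\cD)-1)$ is seen to be non-increasing under a single blow-up. However it need \emph{not} strictly decrease: for the cuspidal closure $\cD\colon t^2=\pi^3$ one has $i_x=2$, and a single point blow-up replaces the cusp tangent to $\cX_{\fm}$ by a smooth branch tangent to the exceptional curve, again with intersection multiplicity $2$, so the defect stays equal to $1$. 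Controlling such configurations---where $\cD$ has a single tangent direction coinciding with that of $\cX_{\fm}$---requires the finer, monotone invariants of embedded resolution rather than the intersection number alone, and their termination for excellent two-dimensional schemes is precisely the content of the cited theorems. For this reason the excellence hypothesis on $R$ cannot simply be removed.
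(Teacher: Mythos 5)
Your proof is correct, but it takes a genuinely different route from the paper's. You apply embedded resolution to the full divisor $\cD + \cX_{\fm}$ to produce a model on which $\cD' + \cX'_{\fm}$ has normal crossings, i.e.\ an RNC model, and then conclude via \Cref{rem:rnc-models}; since all blow-up centres are closed points (hence lie in special fibres), regularity, projectivity and the generic fibre are preserved, so the result is a regular model dominating $\cX$, and excellence enters exactly where you say it does, through the termination of embedded resolution for excellent two-dimensional schemes. The paper extracts much less from \cite[Theorem 9.2.26]{liu2006algebraic}: it uses embedded resolution only to make the closure $\cD'$ of $D$ normal, and then performs a \emph{single} further round of blow-ups at the finitely many points $x \in ({\cX'_{\fm}}^{\!\sm} \cap \cD')(k)$ with $i_x(\cX'_{\fm}, \cD') \geq 2$. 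The punchline there is not that the intersection becomes transversal but that it becomes invisible: after blowing up such an $x$, the strict transform of the cusp closure meets the new special fibre at the point where the exceptional curve crosses the strict transform of the old fibre, which is not a point of the smooth locus of the new special fibre, so the $D$-transversality condition (which only constrains points of $({\cX''_{\fm}}^{\sm} \cap \cD'')(k)$) is vacuous there. Your route buys a stronger output (a full RNC model) from a single citation; the paper's route is more economical in blow-ups — a concern visible in their footnote about minimising centres — and its vacuity trick disposes in one step of exactly the tangency/cusp configurations you correctly identify (e.g.\ $t^2 = \pi^3$, where the defect $i_x - 1$ does not drop under a single blow-up) as the obstruction to a naive induction on intersection numbers. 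Your side remark on the Noether-type formula is tangential but harmless, and your diagnosis of why intersection multiplicity alone is not a terminating invariant matches the reason the paper avoids iterating.
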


\begin{proof}
	As $R$ is excellent, by \cite[Theorem 9.2.26]{liu2006algebraic} successive blow-ups of $\cX$ in closed points\footnote{As we only need $\cD$ to be regular at all $x\in\cX_{\fm}^{\sm}(k)$, blow-ups in those points $x \in (\cX_{\fm}^{\sm} \cap \cD)(k)$ with $i_x(\cX_{\fm}, \cD) \geq 2$ should suffice, but we did not check whether the analogue of \cite[Lemma 9.2.32]{liu2006algebraic} is true.} yield a regular model $\cX'$ of $X$ in which the closure $\cD'$ of $D$ is normal.
	Now we blow up again in every $x \in ({\cX'_{\fm}}^{\!\sm} \cap \cD')(k)$ such that $i_x(\cX'_{\fm}, \cD')\geq 2$ to obtain the model $\cX''$.
	We claim that $\cX''$ is a $D$-transversal regular model of $X$.
	Indeed, away from those $x$ the model $\cX'$ is already $D$-transversal.
	For $x \in ({\cX'_{\fm}}^{\!\sm} \cap \cD')(k)$, let $Q$ be the unique cusp such that $x$ lies in the closure $\cQ'$ of $Q$.
	If $2\leq i_x(\cX'_{\fm}, \cD') = i_x(\cX'_{\fm}, \cQ')$, let $E$ denote the exceptional divisor on $\cX''$ of the blow-up in $x$.
	Then $\cQ''$ intersects $\cX''_{\fm}$ in the intersection point of $E$ with the strict transform of ${\cX'_{\fm}}^{\!\sm}$, which is not a point in ${\cX''_{\fm}}^{\sm}$, so there is no $D$-transversality left to check.
\end{proof}

\begin{defn}
	\label{def:minimal-D-normal-model}
	A regular model $(\cX,\cD)$ of $(X,D)$ over~$R$ is the \emph{minimal regular $D$-transversal model} if it is dominated by any other such model. 
\end{defn}

It is clear that a minimal regular $D$-transversal model, if it exists, is unique up to unique isomorphism.

\begin{prop}
	\label{thm:minimal-D-normal-model}
	If $Y$ is hyperbolic and $R$ is excellent, then a minimal regular $D$-transversal model exists.%
\end{prop}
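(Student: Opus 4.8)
The plan is to rephrase the assertion in terms of the poset of regular $D$-transversal models of $(X,D)$ over $R$, ordered by domination, and to exhibit a least element; by \Cref{def:minimal-D-normal-model} such a least element is exactly the minimal regular $D$-transversal model. Call a regular $D$-transversal model \emph{relatively minimal} if no $(-1)$-curve on it can be contracted so as to yield again a regular $D$-transversal model. Existence of a least element is then equivalent to the existence of a relatively minimal model together with the statement that it is unique and dominated by every regular $D$-transversal model. The poset is nonempty: a regular model of $X$ exists because $R$ is excellent (resolution of singularities for arithmetic surfaces), and by \Cref{thm:D-normal-desingularisation} every regular model is dominated by a regular $D$-transversal one.

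First I would establish existence of relatively minimal models. Beginning with any regular $D$-transversal model, if it is not relatively minimal I contract a $(-1)$-curve whose contraction remains regular and $D$-transversal, and iterate. Since $R$ is a discrete valuation ring there is a single special fibre $\cX_{\fm}$, and each such contraction strictly decreases the finite, positive number of its irreducible components. Hence the process terminates, producing a relatively minimal model.

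The heart of the proof is uniqueness, and this is where hyperbolicity is used. When $g = g(X) \geq 1$, the curve $X$ admits a unique minimal regular model $\cX_{\min}$ that is dominated by every regular model \cite{liu2006algebraic}; in particular every regular $D$-transversal model dominates $\cX_{\min}$. I would then argue that the blow-ups needed to turn the closure of $D$ into a $D$-transversal configuration over $\cX_{\min}$ --- making $\cD$ normal and separating it transversally from $\cX_{\fm}$ at the smooth rational points, in the sense of \Cref{def:D-normal} --- are forced and canonically determined by $(\cX_{\min},D)$, just as the minimal embedded resolution of a divisor on a regular surface is unique and is dominated by every embedded resolution. This produces a distinguished model $\cX_0 \to \cX_{\min}$, and any regular $D$-transversal model $\cX$, being $D$-transversal and dominating $\cX_{\min}$, must factor as $\cX \to \cX_0$ by this minimality; thus $\cX_0$ is the least element. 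When $g = 0$, hyperbolicity forces $\#D(\Kbar) \geq 3$, so that even though $X$ has no unique minimal regular model, the at least three geometric cusps rigidify the situation: I would rule out the elementary transformations responsible for non-isomorphic relatively minimal models in the unmarked genus-$0$ case by tracking the reductions of the cusps, recovering uniqueness.

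I expect the uniqueness step to be the main obstacle, and it has two facets. First, the genus-$0$ hyperbolic case cannot be reduced to a minimal regular model of $X$ and must instead exploit the $\geq 3$ marked geometric cusps, together with hyperbolicity, to prevent collapsing or sliding of rational components of the special fibre. Second, in every case one must control the interplay between contraction of $(-1)$-curves and $D$-transversality: a curve that is contractible as a regular model need not be contractible while preserving $D$-transversality, so the crux is to verify that the transversal-izing blow-ups over the minimal regular model are canonical and are present in every regular $D$-transversal model, which is precisely what forces the factorisation $\cX \to \cX_0$ and yields the least element.
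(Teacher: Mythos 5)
Your skeleton (produce a relatively minimal $D$-transversal model by contracting, then prove uniqueness) matches the paper's, but the uniqueness step — which you correctly identify as the heart — is where your proposal has a genuine gap, and it is precisely the genus-$0$ case. The paper does not split into cases or invoke the minimal regular model of $X$ at all: it shows directly that two non-isomorphic relatively minimal $D$-transversal models $\cX_1,\cX_2$ would, via the factorisation of the birational map into blow-ups (the corrected \cite[Lemma~9.3.20]{liu2006algebraic}), produce two $D$-exceptional divisors $E$ and $\Gamma_{m-1}$ meeting on an intermediate model; then $(\Gamma_{m-1}+E)^2\geq 0$ forces equality in \cite[Theorem~9.1.23]{liu2006algebraic}, so the special fibre equals $\Gamma_{m-1}+E$, adjunction forces $g=0$, and hyperbolicity enters exactly once, in the contradiction $3\leq \#D(\Kbar)=(\Gamma_{m-1}+E)\cdot\cD\leq 2$, the last inequality coming from the classification of $D$-exceptional divisors in \Cref{lem:Dexc-div}. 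Your plan for $g=0$ — ``rule out the elementary transformations \ldots{} by tracking the reductions of the cusps'' — names the difficulty but supplies no mechanism for why three geometric cusps preclude two non-isomorphic relatively minimal models; that mechanism is exactly the intersection computation above, so as written the proposition is unproved in the genus-$0$ case.

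The $g\geq 1$ branch also rests on an assertion rather than a proof, and the analogy you invoke is not quite the right one: $D$-transversality (\Cref{def:D-normal}) is strictly weaker than normal crossings (\Cref{rem:rnc-models}) — it tests only smooth $k$-rational points of $\cX_{\fm}$, so a model where $\cD$ passes through a node of the special fibre is vacuously transversal there — hence the poset of regular $D$-transversal models is strictly larger than the poset of embedded resolutions, and its least element is in general strictly below the minimal embedded resolution. What your factorisation $\cX\to\cX_0$ actually requires is a lemma of the form: if transversality fails at $x\in(\cX_{\fm}^{\sm}\cap\cD)(k)$, then every regular $D$-transversal model dominating $\cX$ factors through the blow-up at $x$ (provable from the standard factorisation of birational morphisms of regular arithmetic surfaces), together with termination of the resulting canonical process. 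This is plausibly fixable, but it is absent from your text; and since the paper's single argument handles all genera uniformly (the $g\geq 1$ case is subsumed because the degenerate configuration already forces $g=0$), your case split buys nothing while leaving the hard case open. A minor point in the same direction: in the arithmetic setting ``contractible $(-1)$-curve'' should be ``exceptional divisor'' in the sense of Castelnuovo's criterion over a discrete valuation ring (e.g.\ $E\cong\bP^1_{k'}$ with $E^2=-[k':k]$), since components with $\rH^0(E,\cO_E)\neq k$ occur and are exactly one of the cases handled by \Cref{lem:Dexc-div}.
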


For the proof, we need a criterion characterising exceptional divisors on a $D$-transversal model whose contraction is again $D$-transversal.

\begin{defn}
	\label{def:D-exceptional}
	Let $\cX$ be a $D$-transversal model and let $E$ be an exceptional divisor on $\cX$.
	We say $E$ is \emph{$D$-exceptional} if the contraction of $E$ is again a $D$-transversal model.
\end{defn}

\begin{lemma}
	\label{lem:Dexc-div}
	Let $\cX$ be a $D$-transversal model.
	\begin{enumerate}
		\item The blow-up of $\cX$ in a closed point $x$ is again $D$-transversal.
		\item Let $E$ be an exceptional divisor on $\cX$.
		Then $E$ is $D$-exceptional if and only if (at least) one of the following conditions is satisfied:
		\begin{enumerate}[label=(\alph*),ref=(\alph*)]
			\item \label{item:E-not-defined-over-k}
			$\rH^0(E,\mathcal{O}_E)\neq k$.
			\item \label{item:E-higher-multiplicity}
			$E$ has multiplicity $\geq 2$ as a component of $\cX_{\fm}$.
			\item \label{item:E-D-dont-intersect}
			$E$ and $\cD$ do not intersect.
			\item \label{item:smooth-k-point}
			$E$ meets $\cD$ in a unique closed point $x$, which is $k$-rational and lies in $\cX_{\fm}^{\sm}$.
		\end{enumerate}
	\end{enumerate}
\end{lemma}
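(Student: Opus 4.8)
The plan is to prove both statements by localising at the center of the birational modification. For the blow-up $\pi\colon\cX'\to\cX$ in a closed point $x$, and for the contraction $\pi_0\colon\cX\to\cX_0$ of the exceptional divisor $E$ to a point $x_0$, the morphism is an isomorphism away from $E$; hence $D$-transversality is inherited at every point not lying over (respectively, not equal to) the center, and it suffices to analyse a single point. Throughout I will use two standard facts about the contraction: the residue field of $x_0$ is $k(x_0)=\rH^0(E,\cO_E)$, and since $\pi_0^*(\cX_0)_{\fm}=\cX_{\fm}$, the multiplicity of $E$ as a component of $\cX_{\fm}$ equals $\mult_{x_0}((\cX_0)_{\fm})$. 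I will also use that, at a $k$-rational point, lying in $\cX_{\fm}^{\sm}$ is equivalent to the special fibre having multiplicity $1$ there.

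For part~(1), recall that the blow-up $\cX'$ is again a regular model and that $\pi^*\cX_{\fm}=\cX'_{\fm}$. A point $x'\in({\cX'_{\fm}}^{\!\sm}\cap\cD')(k)$ lying on the exceptional divisor $E\cong\PP^1_{k(x)}$ forces $E$ to occur with multiplicity $1$ in $\cX'_{\fm}$ and $x'$ to be $k$-rational; the former gives $\mult_x(\cX_{\fm})=1$ and the latter $k(x)=k$, so $x$ is a smooth $k$-rational point of $\cX_{\fm}$ lying on $\cD$. As $\cX$ is $D$-transversal, $\cD$ meets the smooth curve $\cX_{\fm}$ transversally at $x$, and I conclude by the standard fact that the strict transform $\cD'$ then meets $E$ transversally in the single point $x'$, so that $i_{x'}(\cX'_{\fm},\cD')=i_{x'}(E,\cD')=1$.

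The engine for part~(2) is a localisation of the intersection pairing. From $\pi_0^*(\cX_0)_{\fm}=\cX_{\fm}$, $\pi_{0*}\cD=\cD_0$ and the projection formula \cite[Theorem~9.2.12(b)]{liu2006algebraic} one gets the global identity $\cX_{\fm}\cdot\cD=(\cX_0)_{\fm}\cdot\cD_0$; cancelling the matching contributions away from $E$ (where $\pi_0$ is an isomorphism) yields, when $x_0$ is $k$-rational,
\[ i_{x_0}((\cX_0)_{\fm},\cD_0)=\sum_{x'\in E\cap\cD} i_{x'}(\cX_{\fm},\cD)\,[k(x'):k]. \]
The backward implication is then immediate: \ref{item:E-not-defined-over-k} makes $x_0$ non-$k$-rational, \ref{item:E-higher-multiplicity} makes $x_0$ a point of $(\cX_0)_{\fm}$ of multiplicity $\geq 2$ (hence non-smooth), and \ref{item:E-D-dont-intersect} gives $x_0\notin\cD_0$, so in each case $x_0\notin((\cX_0)_{\fm}^{\sm}\cap\cD_0)(k)$ and $\cX_0$ is vacuously $D$-transversal at $x_0$; and if \ref{item:smooth-k-point} holds while \ref{item:E-not-defined-over-k} and \ref{item:E-higher-multiplicity} fail, the displayed sum reduces to the single term $i_x(\cX_{\fm},\cD)=i_x(E,\cD)=1$. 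For the forward implication I argue by contraposition: if all four conditions fail, then failure of \ref{item:E-not-defined-over-k} and \ref{item:E-higher-multiplicity} makes $x_0$ a smooth $k$-rational point of $(\cX_0)_{\fm}$ and failure of \ref{item:E-D-dont-intersect} puts it on $\cD_0$, so $D$-transversality would require the displayed sum to equal $1$; but the negation of \ref{item:smooth-k-point} forces it to be at least $2$, since either $E\cap\cD$ has two points (each term $\geq 1$), or its unique point $x$ has $[k(x):k]\geq 2$, or $x$ is $k$-rational with $x\notin\cX_{\fm}^{\sm}$, whence $\mult_x(\cX_{\fm})\geq 2$ and $i_x(\cX_{\fm},\cD)\geq\mult_x(\cX_{\fm})\cdot\mult_x(\cD)\geq 2$.

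The main obstacle I anticipate is establishing the displayed localisation identity rigorously — making precise that the global projection formula together with the isomorphism away from $E$ isolates exactly the contribution at $x_0$ — and verifying the multiplicity lower bound $i_x(\cX_{\fm},\cD)\geq\mult_x(\cX_{\fm})\cdot\mult_x(\cD)$ at non-smooth points of the special fibre, as this is where the distinction between genuinely non-transversal contractions and harmless ones resides.
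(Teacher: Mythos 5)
Your proposal is correct and matches the paper's proof in all essentials: your displayed localisation identity is precisely the paper's key equation, which the paper likewise derives from the projection formula (pushing forward the intersection cycle along the contraction and comparing coefficients at the image point --- the same computation as your globalise-and-cancel step), followed by the same four-case analysis using $k(x_0)=\rH^0(E,\cO_E)$ and the equivalence of multiplicity one with smoothness at a $k$-rational point, for which the paper cites \cite[Proposition~9.2.23]{liu2006algebraic}. The only minor deviations are that the paper proves part~(1) by applying the same identity to the blow-up rather than via your strict-transform tangent-direction argument, and handles the forward direction of part~(2) directly --- $D$-transversality at $x'$ gives $\sum_{x\in E} i_x(\cX_{\fm},\cD)\,[k(x):k(x')]=1$, which at once forces condition~(d) --- so it never needs the inequality $i_x(\cX_{\fm},\cD)\geq\mult_x(\cX_{\fm})\mult_x(\cD)$ that you flag as your main remaining obstacle: the trivial observation that local intersection number $1$ means the two local equations generate $\fm_x$ (hence $\cX_{\fm}$ is regular at $x$) is the simpler substitute.
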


\begin{proof}
	(1) Let $\pi\colon\cX'\to\cX$ be the blow-up of $\cX$ in $x$, let $E= \pi^{-1}(x)$ denote the exceptional locus of $\pi$ and let $\cD'$ be the closure of $D$ in $\cX'$.
	As $\pi$ is an isomorphism above~$\cX \smallsetminus \{x\}$, we only need to check that $\cX'$ is $D$-transversal at points in~$E$. 
	Now if $x \not\in (\cX_{\fm}^{\sm} \cap \cD)(k)$ then $E$ is not defined over $k$ or does not have multiplicity $1$ as a component of $\cX'_{\fm}$ or does not intersect $\cD'$, and in all cases $\cX'$ is automatically $D$-transversal.
	So assume $x \in (\cX_{\fm}^{\sm} \cap \cD)(k)$.
	The projection formula \cite[Theorem~III.4.1]{lang:arakelov} applied to the divisors $\cX_{\fm}$ on~$\cX$ and $\cD'$ on~$\cX'$ yields 
	\[ \pi_*(\pi^* \cX_{\fm}.\cD') = \cX_{\fm}.\pi_*(\cD'). \]
	Using that $\pi^*(\cX_{\fm}) = \cX_{\fm}'$ and $\pi_*(\cD') = \cD$, comparing coefficients of~$[x]$ yields
	\begin{equation}
		\label{eq:proj-formula-refined}
		\sum_{x' \in E} i_{x'}(\cX_{\fm}', \cD') [k(x') : k(x)] = i_x(\cX_{\fm}, \cD).
	\end{equation}
	As $\cX$ is $D$-transversal at~$x$, the right hand side is equal to~$1$, which implies that there is exactly one closed point~$x'$ where $E$ and $\cD'$ intersect, and this point is $k$-rational and satisfies $i_{x'}(\cX'_{\fm},\cD')=1$, i.e.\ $\cX'$ is $D$-transversal at $x'$.

	(2) Let $\pi\colon\cX\to\cX'$ be the contraction of $E$, let $x' \coloneqq \pi(E)$, and let $\cD'$ be the closure of $D$ in $\cX'$.
	As $\pi$ is an isomorphism above $\cX' \smallsetminus \{x'\}$, we see that $\cX'$ is automatically $D$-transversal except possibly at $x'$.
	Equation \eqref{eq:proj-formula-refined} applied to $\pi\colon \cX\to \cX'$ yields
	\begin{equation}\label{eq:proj-formula-refined2}
		 \sum_{x \in E} i_x(\cX_{\fm},\cD) [k(x):k(x')] = i_{x'}(\cX'_{\fm},\cD').
	\end{equation}
	
	Now in cases \ref{item:E-not-defined-over-k}, \ref{item:E-higher-multiplicity}, or \ref{item:E-D-dont-intersect}, we have $x'\not\in ({\cX'_{\fm}}^{\!\sm} \cap \cD')(k)$ and hence $\cX'$ is automatically $D$-transversal.
	In case \ref{item:smooth-k-point}, we have $x'\in ({\cX'_{\fm}}^{\!\sm} \cap \cD')(k)$, so $\cX$ is $D$-transversal at $x$ by assumption and $i_{x'}(\cX'_{\fm},\cD') = 1$ by \eqref{eq:proj-formula-refined2}, as desired.
	
	Conversely, suppose that $\cX'$ is $D$-transversal and assume we are not in cases \ref{item:E-not-defined-over-k}, \ref{item:E-higher-multiplicity}, or \ref{item:E-D-dont-intersect}.
	That is, we assume that $\rH^0(E,\mathcal{O}_E)=k(x')=k$, that $E$ has multiplicity~$1$ in $\cX_{\fm}$, which is equivalent by \cite[Proposition 9.2.23]{liu2006algebraic} to $\cX'_{\fm}$ being regular at $x'$, and that $x'$ lies on $\cD'$.
	Then $x'\in ({\cX'_{\fm}}^{\!\sm} \cap \cD')(k)$, so $i_{x'}(\cX'_{\fm},\cD') = 1$ by $D$-transversality, and \eqref{eq:proj-formula-refined2} implies condition \ref{item:smooth-k-point}.
\end{proof}

\begin{proof}[Proof of \Cref{thm:minimal-D-normal-model}]
	\Cref{thm:D-normal-desingularisation} ensures the existence of a $D$-transversal regular model.
	By repeatedly contracting all $D$-exceptional divisors, we obtain a relatively minimal $D$-transversal model, i.e.\ one that does not dominate any other such model.
	We show that any two relatively minimal $D$-transversal models $\cX_1$ and $\cX_2$ are isomorphic, and therefore minimal such models.
	Assume the contrary.
	As $\cX_1$ is relatively minimal, the birational map $f\colon \cX_1 \dashrightarrow \cX_2$ is not defined everywhere, say not at the closed point $x_0$.
	We now refine the argument of the corrected version of \cite[Lemma 9.3.20]{liu2006algebraic}, published as an erratum \cite{liu-erratum} on the author's webpage.
	Namely, there exists a morphism $g\colon\widetilde{\cZ}\to \cX_2$ that is a finite sequence of blow-ups in closed points
	\[
		g\colon \widetilde{\cZ} = \cZ_n \to \ldots \to \cZ_1 \to \cZ_0=\cX_2
	\]
	and a morphism $h\colon \widetilde{\cZ} \to \cX_1$ such that the diagram
	\begin{equation*}
		\begin{tikzcd}
		\widetilde{\cZ} \arrow[d, "g"'] \arrow[rd, "h"] &       \\
		\cX_2 \arrow[r, "f^{-1}"', dashed]                    & \cX_1
		\end{tikzcd}
	\end{equation*}
	commutes.
	Let $E$ be an exceptional divisor on $\widetilde{\cZ}$ contained in $h^{-1}(x_0)$ and let $\Gamma_i\subseteq\cZ_i$ denote the exceptional divisor of $\cZ_i\to\cZ_{i-1}$.
	By \Cref{lem:Dexc-div}, the model $\widetilde{\cZ}$ is $D$-transversal and the divisors $E$ and all $\Gamma_i$ are $D$-exceptional.
	If $E$ and $\Gamma_n$ do not meet, the image of $E$ in $\cZ_{n-1}$ is again $D$-exceptional and will also be denoted by $E$.
	Let $m\leq n$ be the smallest integer such that $\Gamma_m$ and $E$ are disjoint.
	If $m=1$, then $E$ is a $D$-exceptional divisor on $\cZ_0=\cX_2$, contradicting the relative minimality of $\cX_2$.
	If $m\geq 2$, then $E$ and $\Gamma_{m-1}$ are exceptional divisors on $\cZ_{m-1}$ such that $E\cap\Gamma_{m-1}\neq\emptyset$.
	Pick a point $x\in E\cap\Gamma_{m-1}$ and let $k_1=\rH^0(\Gamma_{m-1},\cO_{\Gamma_{m-1}})$ and $k_2=\rH^0(E,\cO_E)$.
	Then $k_i\subseteq k(x)$ and
	\[
		(\Gamma_{m-1} + E)^2 = \Gamma_{m-1}^2 + E^2 + 2\Gamma_{m-1}\cdot E \geq  -[k_1:k]-[k_2:k]+2[k(x):k] \geq 0.
	\]
	By \cite[Theorem 9.1.23]{liu2006algebraic}, we have equality above, thus $k_1=k(x)=k_2$, and we have $\cZ_{m-1,\fm}=i(\Gamma_{m-1}+E)$ for some integer $i\geq 1$. %
	The adjunction formula \cite[Propositions 9.1.35 and 9.3.10(a)]{liu2006algebraic} shows $g=0$; in particular, if $X$ has genus $g \geq 1$ we have arrived at a contradiction.
	Assume $g=0$ from now on.
	As the smooth locus of $\cZ_{m-1,\fm}$ cannot be empty, we have $i=1$.
	As $X$ is geometrically connected, so is the special fibre $\cZ_{m-1,\fm}$ by \cite[Lemma 8.3.6(b)]{liu2006algebraic} and hence $k_1=k_2=k$.
	As $\Gamma\in\{\Gamma_{m-1},E\}$ is $D$-exceptional, \Cref{lem:Dexc-div}(2) shows that $\Gamma\cdot\cD\leq 1$, where $\cD$ is the closure of $D$ inside the model $\cZ_{m-1}$.
	Now $Y$ is hyperbolic, hence $\# D(\overline{K}) \geq 3$, and using \cite[Proposition 9.1.30]{liu2006algebraic} we arrive at the contradiction
	\[
	3 \leq \# D(\overline{K}) = \sum_{Q\in|D|} [k(Q):K] = \cZ_{m-1,\fm}\cdot \cD = (\Gamma_{m-1}+E)\cdot \cD \leq 2. \qedhere
	\]
\end{proof}

\begin{rem}
	In all applications, $R$ will be the localisation of the ring of integers in a number field at a prime ideal or a completion thereof and hence excellent, so the existence of a $D$-transversal regular model is guaranteed by \Cref{thm:minimal-D-normal-model}.
	Working with one such model is usually enough, but the minimal $D$-transversal regular model minimizes the number of possible rational reduction types, compare \S\ref{sec:rational-reduction-types}.
\end{rem}

\subsection{Rational reduction types}
\label{sec:rational-reduction-types}

The $K$-rational points $Y(K)$ can also be partitioned by reduction type, according to their mod-$\fm$ reduction on a suitable regular model. Here we want to work with a $D$-transversal regular model $(\cX,\cD)$, see \Cref{def:D-normal}. 
A $K$-point of~$Y$, unlike an $R$-integral point, may reduce onto the special fibre of~$\cD$. In this case, the reduction type of the point records to which point on $\cD_{\fm}$ it reduces; otherwise, it records only the component. 

\begin{defn}
	\label{def:rational-reduction-type}
	\begin{enumerate}[label=(\alph*)]
		\item Let $(\cX,\cD)$ be a regular $D$-transversal model of $(X,D)$. A \emph{rational mod-$\fm$ reduction type} for $(\cX,\cD)$ is either an irreducible/connected component of $\cX_{\fm}^{\sm} \smallsetminus \cD_{\fm}$ containing at least one $k$-point, or a $k$-point on the special fibre of~$\cD$ at which~$\cX_{\fm}$ is smooth. 
		In the latter case, we call the reduction type \emph{cuspidal}; otherwise we call it a \emph{component reduction type}.
		The set of rational mod-$\fm$ reduction types is denoted by $\cC_{\fm} \coloneqq \cC_{\fm}^0 \sqcup \cC_{\fm}^1$ where
		\begin{align*}
			\cC_{\fm}^0 &\coloneqq \{\text{components of $\cX_{\fm}^{\sm} \smallsetminus \cD$ containing a $k$-point}\},\\
			\cC_{\fm}^1 &\coloneqq (\cX_{\fm}^{\sm} \cap \cD)(k).
		\end{align*}
		For a reduction type~$\Sigma_{\fm} \in \cC_{\fm}$, denote by $Y(K)_{\Sigma_{\fm}}$ the set of all $K$-rational points of~$Y$ whose mod-$\fm$ reduction lies on~$\Sigma_{\fm} \in \cC_{\fm}^0$, respectively, is equal to $\Sigma_{\fm} \in \cC_{\fm}^1$. 
		\item If no model of $X$ is specified and $Y$ is hyperbolic, a \emph{rational mod-$\fm$ reduction type} for $(X,D)$ is a rational mod-$\fm$ reduction type for the minimal regular $D$-transversal model $(\cX_{\min},\cD_{\min})$, which exists by \Cref{thm:minimal-D-normal-model}.
	\end{enumerate}
\end{defn}

For any regular $D$-transversal model $(\cX,\cD)$ of $(X,D)$, the rational mod-$\fm$ reduction types for $(\cX,\cD)$ define a partition of $Y(K)$:
\[ Y(K) = \coprod_{\Sigma_{\fm} \in \cC_{\fm}} Y(K)_{\Sigma_{\fm}}. \]

\begin{rem}
	\label{rem:rational-reduction-types-betts}
	Our rational mod-$\fm$ reduction types are a slight modification of the reduction types considered in \cite[§6.1.2]{betts:effective}. One difference is that we only require the model to be $D$-transversal, which is weaker than requiring the divisor $\cX_{\fm} + \cD$ to have normal crossings everywhere. Moreover, our $\cC_{\fm}^1$ contains only $k$-points rather than all closed points of $\cD_{\fm}$, and by requiring $\cX_{\fm}$ to be smooth at those points we exclude points where $\cD$ intersects the special fibre $\cX_{\fm}$ in a component of higher multiplicity. Since every $K$-point of~$Y$ reduces onto a $k$-point of the smooth locus of~$\cX_{\fm}$, our reduction types are the only ones that can occur. 
\end{rem}

For $K$-rational points, the reduction type constrains the image in $V_{D,\fm}$ to a translate of a subgroup of rank~$\leq 1$. 

\begin{prop}
	\label{thm:abel-jacobi-rational-reduction-type}
	Let $P_0 \in Y(K)$, let $(\cX,\cD)$ be a regular $D$-transversal model of $(X,D)$ and let $\Sigma_{\fm} \in \cC_{\fm}$ be a rational mod-$\fm$ reduction type for $(\cX,\cD)$. Consider the composition
	\begin{equation}
	\label{eq:rational-points-to-Vm}
	Y(K) \overset{\AJ_{P_0}}{\lto} J_Y(K) \overset{\sigma_{\fm}}{\lto} V_{D,\fm}.
	\end{equation}
	\begin{enumerate}[label=(\alph*)]
		\item \label{item:integral-red-type}
		If $\Sigma_{\fm} \in \cC_{\fm}^0$ is a component then the map~\eqref{eq:rational-points-to-Vm} is constant on~$Y(K)_{\Sigma_{\fm}}$. More precisely, if $P \in Y(K)$ has reduction type~$\Sigma_{\fm}$ then
		\begin{equation}
		\label{eq:sigma-on-C0}
		\sigma_{\fm}(P-P_0) = -\cP_0.\tilde{\cD}  + \Phi(\Sigma_{\fm} - \cpt_{\fm}(P_0)).\tilde{\cD}.
		\end{equation}
		\item \label{item:cuspidal-red-type}
		If $\Sigma_{\fm} \in \cC_{\fm}^1$ is a point in $(\cX_{\fm}^{\sm} \cap \cD)(k)$, then the image of $Y(K)_{\Sigma_{\fm}}$ under~\eqref{eq:rational-points-to-Vm} is contained in a translate of a subgroup of~$V_{D,\fm}$ of rank~$\leq 1$. More precisely, if $P \in Y(K)$ has reduction type~$\Sigma_{\fm}$ then
		\begin{equation}
		\label{eq:sigma-on-C1}
		\sigma_{\fm}(P-P_0) \in \bZ \cdot [\Sigma_{\fm}] - \cP_0.\tilde{\cD} + \Phi(\cpt(\Sigma_{\fm}) - \cpt_{\fm}(P_0)).\tilde{\cD}.
		\end{equation}
	\end{enumerate}
\end{prop}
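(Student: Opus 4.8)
The plan is to reuse the bilinear decomposition from the proof of \Cref{thm:abel-jacobi-integral-reduction-type}. For any $P \in Y(K)$ with Zariski closure $\cP$ in $\cX$, bilinearity of the intersection pairing gives
\[
\sigma_{\fm}(P - P_0) = \Psi(P - P_0).\tilde\cD = \cP.\tilde\cD - \cP_0.\tilde\cD + \Phi(P - P_0).\tilde\cD,
\]
so the task reduces to evaluating the two $P$-dependent terms $\cP.\tilde\cD$ and $\Phi(P-P_0).\tilde\cD$. For part~\ref{item:integral-red-type} I would observe that a $K$-point whose reduction type is a component $\Sigma_{\fm}\in\cC_{\fm}^0$ reduces into $\cX_{\fm}^{\sm}\smallsetminus\cD_{\fm}$, hence misses $\cD$ and is automatically $R$-integral; that is, $Y(K)_{\Sigma_{\fm}} = \cY(R)_{\Sigma_{\fm}}$. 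The formula \eqref{eq:sigma-on-C0} then coincides with \eqref{eq:sigma-on-integral-reduction-type}, so part~\ref{item:integral-red-type} is simply \Cref{thm:abel-jacobi-integral-reduction-type} restated.

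The real content is part~\ref{item:cuspidal-red-type}, where the term $\cP.\tilde\cD$ no longer vanishes because $P$ reduces onto the cuspidal point $\Sigma_{\fm}\in(\cX_{\fm}^{\sm}\cap\cD)(k)$. Here I would invoke $D$-transversality (\Cref{def:D-normal}): by \cite[Lemma~9.1.8(b)]{liu2006algebraic} the closure $\cD$ is regular, hence normal, at $\Sigma_{\fm}$, so $\nu\colon\tilde\cD\to\cX$ is a local isomorphism there and has a unique preimage of $\Sigma_{\fm}$, which I also write $[\Sigma_{\fm}]$. Since $\cP$ is a section, its special fibre is the single point $\Sigma_{\fm}$, so $\cP$ meets $\cD$ only at $\Sigma_{\fm}$; computing the intersection with $\tilde\cD$ via \eqref{eq:intersection-Dtilde} and using that $\nu$ is a local isomorphism at $\Sigma_{\fm}$, this agrees with the ordinary local intersection number on $\cX$, giving $\cP.\tilde\cD = m\,[\Sigma_{\fm}]$ with $m = i_{\Sigma_{\fm}}(\cP,\cD)\in\bZ_{\ge 1}$. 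As $m$ varies with $P$, this term ranges inside $\bZ\cdot[\Sigma_{\fm}]$, producing the rank-$\le 1$ subgroup in the statement.

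For the last term I would argue exactly as in the integral case: smoothness of $\cX_{\fm}$ at $\Sigma_{\fm}$ forces this point onto a unique, multiplicity-one component of the special fibre, and $i_{\fm}(\cP,\cX_{\fm}) = [k(P):K] = 1$ then gives $\cpt_{\fm}(P) = \cpt(\Sigma_{\fm})$, the reduction component through $\Sigma_{\fm}$. By \Cref{lem:phi-via-red} the divisor $\Phi(P-P_0)$ depends on $P$ only through $\cpt_{\fm}(P)$, so $\Phi(P-P_0) = \Phi(\cpt(\Sigma_{\fm}) - \cpt_{\fm}(P_0))$, and substituting the three evaluations into the decomposition yields the asserted containment \eqref{eq:sigma-on-C1}.

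The step I expect to be the main obstacle is the identity $\cP.\tilde\cD = m\,[\Sigma_{\fm}]$: it relies essentially on $D$-transversality to ensure that $\cD$ is regular at $\Sigma_{\fm}$, so that a single point of $\tilde\cD$ lies above it rather than several branches each contributing a separate term, and that the entire intersection is concentrated at that one point with positive integer multiplicity. The care needed is precisely in matching the definition \eqref{eq:intersection-Dtilde} of intersection against the normalisation $\tilde\cD$ with the ordinary intersection number on $\cX$, which is legitimate here only because normality of $\cD$ at $\Sigma_{\fm}$ makes $\nu$ a local isomorphism.
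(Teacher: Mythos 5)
Your proposal is correct and follows essentially the same route as the paper's proof: part~(a) reduces to \Cref{thm:abel-jacobi-integral-reduction-type} via $Y(K)_{\Sigma_{\fm}} = \cY(R)_{\Sigma_{\fm}}$, and part~(b) uses the same bilinear decomposition, $D$-transversality to pin $\cP.\tilde\cD$ inside $\bZ\cdot[\Sigma_{\fm}]$, and \Cref{lem:phi-via-red} with $\cpt_{\fm}(P)=\cpt(\Sigma_{\fm})$ for the $\Phi$-term. Your extra elaboration of why normality of $\cD$ at $\Sigma_{\fm}$ makes $\nu$ a local isomorphism, so that a single point of $\tilde\cD$ carries the whole intersection, is exactly the content the paper compresses into one sentence and expands upon in \Cref{rem:nc-condition-relevance}.
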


\begin{proof}
	In case~\ref{item:integral-red-type}, $\Sigma_{\fm}$ is an integral reduction type, so $Y(K)_{\Sigma_{\fm}} = \cY(R)_{\Sigma_{\fm}}$ and the claim follows from \Cref{thm:abel-jacobi-integral-reduction-type}. Suppose we are in case~\ref{item:cuspidal-red-type}. Let $P$ be a $K$-rational point of reduction type~$\Sigma_{\fm}$, let $\cP$ and $\cP_0$ be the horizontal extension of~$P$ and $P_0$ in~$\cX$, respectively. %
	Then
	\begin{align*}
		\sigma_{\fm}(P-P_0) = \Psi(P-P_0).\tilde{\cD} =\cP.\cD - \cP_0.\cD + \Phi(P-P_0).\tilde{\cD}.
	\end{align*}
	Since $P$ reduces to $\Sigma_{\fm} \in \cD_{\fm}$ and $\cX$ is $D$-transversal, the divisor~$\cP$ intersects~$\tilde{\cD}$ precisely in the point $\Sigma_{\fm}$, so we have $\cP.\tilde{\cD} \in \bZ \cdot [\Sigma_{\fm}]$. For the third summand, by \Cref{lem:phi-via-red} we can take
	\[ \Phi(P-P_0) = L^+(\cpt_{\fm}(P-P_0)) = \Phi(\cpt(\Sigma_{\fm}) - \cpt_{\fm}(P_0)), \]
	which shows the formula~\eqref{eq:sigma-on-C1}.
\end{proof}

\begin{rem}
	\label{rem:rank-leq1-subgroup}
	In case~\ref{item:cuspidal-red-type} of \Cref{thm:abel-jacobi-rational-reduction-type}, the image of $Y(K)_{\Sigma_{\fm}}$ under the map~\eqref{eq:rational-points-to-Vm} is contained in a translate of the group $\bZ \cdot [\Sigma_{\fm}] \subseteq V_{D,\fm}$. This is a free subgroup of rank~1 unless $D$ consists of a single $K$-rational point, in which case $V_{D,\fm}$ itself is trivial. Indeed, if $[\Sigma_{\fm}] = 0$ in~$V_{D,\fm}$, then $V_{D,\fm} = 0$, so $\Sigma_{\fm}$ is the only closed point on $\tilde{\cD}$.
	As $\cX$ is $D$-transversal at $\Sigma_{\fm}$, this shows that $D$ contains only a single point~$Q$ and that $\tilde{\cD}=\cD=\cQ$.
	The degree $[k(Q) : K]$ equals the $\fm$-intersection number $i_{\fm}(\cX_{\fm}, \cD)$, which must be equal to one as a consequence of the $D$-transversality condition and $\Sigma_{\fm} \in (\cX_{\fm}^{\sm} \cap \cD)(k)$.
\end{rem}

\begin{rem}
	\label{rem:nc-condition-relevance}
	The $D$-transversality of $\cX$ is important in the proof of \Cref{thm:abel-jacobi-rational-reduction-type}. Otherwise, knowing that $P \in Y(K)$ reduces to $\Sigma_{\fm}$ on~$\cD$ only constrains $\cP.\tilde\cD$ to be an element of the subgroup generated by all points of~$\tilde \cD$ lying over~$\Sigma_{\fm}$, which could have rank larger than one.
\end{rem}

\begin{defn}
	\label{def:local-selmer-set-rational}
	Let $P_0 \in Y(K)$ be a base point. Let $(\cX,\cD)$ be a regular $D$-transversal model of $(X,D)$ over~$R$ and let $\Sigma_{\fm} \in \cC_{\fm}$ be a rational mod-$\fm$ reduction type for $(\cX,\cD)$. We define the subset $\fS_{\fm}(P_0,\Sigma_{\fm}) \subseteq V_{D,\fm}$ as follows:
	\begin{enumerate}[label=(\alph*)]
		\item \label{item:rational-Sigma-component}
		If $\Sigma_{\fm} \in \cC_{\fm}^0$ is a component, then $\fS_{\fm}(P_0,\Sigma_{\fm})$ is defined as the one-element subset of~$V_{D,\fm}$ containing the right hand side of~\eqref{eq:sigma-on-C0}, i.e.
		\begin{equation}
			\label{eq:Sm-rational-C0}
			\fS_{\fm}(P_0,\Sigma_{\fm}) = \bigl\{-\cP_0.\tilde{\cD}  + \Phi(\Sigma_{\fm} - \cpt_{\fm}(P_0)).\tilde{\cD} \bigr\}.
		\end{equation}
		\item \label{item:rational-Sigma-cusp}
		If $\Sigma_{\fm} \in \cC_{\fm}^1$ is a point in $(\cX_{\fm}^{\sm} \cap \cD)(k)$, then $\fS_{\fm}(P_0,\Sigma_{\fm})$ is defined as the set on the right hand side of~\eqref{eq:sigma-on-C1}, i.e.
		\begin{equation}
			\label{eq:Sm-rational-C1}
			\fS_{\fm}(P_0,\Sigma_{\fm}) = \bZ \cdot [\Sigma_{\fm}] - \cP_0.\tilde{\cD} + \Phi(\cpt(\Sigma_{\fm}) - \cpt_{\fm}(P_0)).\tilde{\cD}.
		\end{equation}
	\end{enumerate}
\end{defn}

According to \Cref{thm:abel-jacobi-rational-reduction-type}, we have
\[ (\sigma_{\fm} \circ \AJ_{P_0})(P) \in \fS_{\fm}(P_0,\Sigma_{\fm}) \]
for all $P \in Y(K)$ of reduction type~$\Sigma_{\fm}$.

\begin{rem}
	\label{rem:redundant-reduction-types}
	When $\Sigma_{\fm}$ is a component reduction type, it often happens that the singleton set $\fS(P_0,\Sigma_{\fm})$ of \eqref{eq:Sm-rational-C0} is contained in the $\fS(P_0,\Sigma_{\fm}')$ of \eqref{eq:Sm-rational-C1} for a cuspidal reduction type~$\Sigma_{\fm}'$. 
	Specifically, when $x \in \cC_{\fm}^1$ is a point in $(\cX_{\fm}^{\sm} \cap \cD)(k)$ and $\cpt(x) \in \cC_{\fm}^0$ is the component of the special fibre on which it lies, then $\fS_{\fm}(P_0, \cpt(x)) \subseteq \fS_{\fm}(P_0, x)$.	Moreover, if $\cD$ intersects the special fibre $\cX_{\fm}$ in only one component, the set $\fS_{\fm}(P_0,C)$ is in fact the same for each component $C \in \cC_{\fm}^0$ by \Cref{selmer-sets-constant}, so we have $\fS_{\fm}(P_0,C) \subseteq \fS_{\fm}(P_0,x)$ for \emph{all} component reduction types $C \in \cC_{\fm}^0$. These observations can be useful in applications of the Affine Chabauty method. They imply that components containing a point in $(\cX_{\fm}^{\sm} \cap \cD)(k)$ need not be considered separately as rational reduction types; and if $\cD$ intersects $\cX_{\fm}$ in only one component and $(\cX_{\fm} \cap \cD)(k) \neq \emptyset$ (so that $\cC_{\fm}^1 \neq \emptyset$), no component reduction types need to be considered at all, as they are already covered by any cuspidal reduction type.
\end{rem}

\subsection{Comparison with the Kummer map}
\label{sec:comparison-with-kummer-map}

Assume that $K$ is a non-archimedean local field and $R$ is its valuation ring of residue characteristic $\ell$.
Let $K^s/K$ be a separable closure and let $G_K = \Gal(K^s/K)$ be the absolute Galois group of~$K$.
Let $p$ be a prime.
The short exact sequences of commutative group schemes
\[ 0 \lto J_Y[p^n] \lto J_Y \overset{\cdot p^n}{\lto} J_Y \lto 0 \]
induce the \emph{Kummer map}
\begin{equation}
	\label{eq:kummer-map}
	\kappa\colon J_Y(K) \lto \rH^1(G_K, V_p J_Y),
\end{equation}
where the cohomology on the right hand side is continuous Galois cohomology of the $\bQ_p$-linear Tate module $V_p J_Y = \bigl( \varprojlim_n J_Y(K^s)[p^n] \bigr) \otimes_{\bZ_p} \bQ_p$ of~$J_Y$.
In this section, we compare the $D$-intersection map $\sigma_{\fm}$ with the Kummer map $\kappa$.

\subsubsection{The case $\ell\neq p$}

Here we show an ``independence of $p$'' result for the Kummer map, namely that $V_{D,\fm}$ provides a $\bQ$-form of the $\bQ_p$-vector space $\rH^1(G_K, V_p J_Y)$, and that the Kummer map~\eqref{eq:kummer-map} factors through $V_{D,\fm}$ via the $D$-intersection map $\sigma_{\fm}$.
Hence one may regard $\sigma_{\fm}$ as a ``motivic'' Kummer map of $J_Y$.

\begin{thm}
	\label{kummer-comparison}
	Assume $\ell\neq p$.
	Then there is a $\bQ$-linear map $\varphi_{\bQ}\colon V_{D,\fm} \to \rH^1(G_K, V_p J_Y)$ inducing an isomorphism
	\[ V_{D,\fm} \otimes_{\bQ} \bQ_p \cong \rH^1(G_K, V_p J_Y) \]
	and fitting into the commutative diagram
	\begin{equation} 
	\label{diag:Kummer-compare}
	\begin{tikzcd}[column sep=small]
		& J_Y(K) \arrow[dl,"\sigma_{\fm}"'] \drar{\kappa} & \\
		V_{D,\fm} \arrow[rr,"\varphi_{\bQ}"] && \rH^1(G_K, V_p J_Y).
	\end{tikzcd}
	\end{equation}
\end{thm}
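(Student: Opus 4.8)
The plan is to reduce the whole comparison to the toric part $T_D$ of $J_Y$, where both $\kappa$ and $\sigma_{\fm}$ become the valuation (``tropicalisation'') map and the $\bQ$-form is manifest. Write $q = \#k$ and $I_K \subseteq G_K$ for the inertia subgroup.

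\textbf{Step 1: the target side.} From the defining sequence \eqref{eq:generalised-jacobian-exact-sequence} I take Tate modules to get $0 \to V_p T_D \to V_p J_Y \to V_p J \to 0$, and I first show the inclusion induces an isomorphism $\rH^1(G_K, V_p T_D) \xrightarrow{\sim} \rH^1(G_K, V_p J_Y)$. Since $\ell \neq p$, the group $J(K)[p^\infty]$ is finite, so $\rH^0(G_K, V_p J) = (V_p J)^{G_K} = 0$; local Tate duality combined with the Weil self-duality $V_p J \cong (V_p J)^*(1)$ gives $\rH^2(G_K, V_p J) \cong \rH^0(G_K, V_p J)^* = 0$, and the vanishing local Euler characteristic (valid because $\ell \neq p$) then forces $\rH^1(G_K, V_p J) = 0$. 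The long exact cohomology sequence gives the claimed isomorphism. For the torus part I use $V_p T_D = X_*(T_D) \otimes \bQ_p(1)$ with $X_*(T_D) = \bZ[D(K^s)]/\bZ$; since $\ell \neq p$ the twist $\bQ_p(1)$ is unramified with Frobenius acting by a power of $q \neq 1$, so Kummer theory identifies $\rH^1(G_K, V_p T_D)$ with the $p$-adic completion $T_D(K)^{\wedge} \otimes \bQ_p$, whose unit part dies, leaving the valuation lattice $\Lambda \coloneqq T_D(K)/T_D(K)^{\cpt}$. The valuation identifies $\Lambda \cong Z_0(\tilde\cD)/[\tilde\cD_{\fm}]$ — here I use that each residue field $k(Q)$ is again local with a single place, so $\tilde\cQ_{\fm}$ is one point and the ramification-index normalisation of $[\tilde\cD_{\fm}]$ matches the image of $K^\times$ — and its rationalisation is $V_{D,\fm}$ of \Cref{defn:Vm}. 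I then define $\varphi_{\bQ}$ as the $\bQ$-linear extension of $V_{D,\fm} \cong \Lambda \otimes \bQ$ followed by $\Lambda \otimes \bQ_p \cong \rH^1(G_K, V_p T_D) \cong \rH^1(G_K, V_p J_Y)$, so that $\varphi_{\bQ} \otimes \bQ_p$ is an isomorphism by construction.

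\textbf{Step 2: both maps factor through the tropicalisation.} Because $J$ is proper, $J(K)$ is compact, so the abelian quotient contributes nothing to valuations and the tropicalisation $J_Y(K) \twoheadrightarrow \Lambda$ factors through $T_D$. I claim both $\kappa$ and $\varphi_{\bQ} \circ \sigma_{\fm}$ annihilate the maximal compact subgroup $C \subseteq J_Y(K)$, i.e.\ the kernel of tropicalisation. For $\kappa$: it is continuous, $C$ is virtually pro-$\ell$, and $\Hom_{\mathrm{cont}}(\text{pro-}\ell, \bQ_p) = 0$ for $\ell \neq p$; since $\rH^1(G_K, V_p J_Y)$ is torsion-free this forces $\kappa(C) = 0$. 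For $\sigma_{\fm}$: by \Cref{thm:abel-jacobi-rational-reduction-type} the composite $\sigma_{\fm} \circ \AJ_{P_0}$ depends only on the reduction type and is therefore locally constant, and $\sigma_{\fm}$ takes values in the discrete finitely generated free group $A_{D,\fm}$ of \eqref{def:Z-structure-VD}; the image of the compact group $C$ is then a finite subgroup of a torsion-free group, hence trivial. Thus both maps descend to $\Lambda$.

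\textbf{Step 3: commutativity on the torus and conclusion.} It now suffices to compare the two maps on $\Lambda$, which is the image of $T_D(K)$. By functoriality of the Kummer map along $T_D \hookrightarrow J_Y$ and the isomorphism of Step~1, $\kappa|_{T_D(K)}$ equals $\kappa_{T_D}$ followed by $\rH^1(G_K, V_p T_D) \cong \rH^1(G_K, V_p J_Y)$; classical Kummer theory for tori — ultimately the fact that $\rH^1(G_K, \bQ_p(1)) \cong \bQ_p$ is realised by the valuation — identifies $\kappa_{T_D}$, under $\rH^1(G_K, V_p T_D) \cong \Lambda \otimes \bQ_p$, with the tropicalisation $T_D(K) \to \Lambda$. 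On the other hand, \Cref{lem:sigma-of-principal} computes $\sigma_{\fm}|_{T_D(K)}$ as $(a_Q)_Q \mapsto \sum_Q \ord_{x_Q}(a_Q)[x_Q]$, which is the same tropicalisation under $\Lambda \cong Z_0(\tilde\cD)/[\tilde\cD_{\fm}]$. Hence $\kappa = \varphi_{\bQ} \circ \sigma_{\fm}$ on $T_D(K)$, therefore on $\Lambda$, and therefore on all of $J_Y(K)$ by Step~2, which is exactly the commutativity of \eqref{diag:Kummer-compare}.

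The main obstacle I expect is Step~1: pinning down the \emph{canonical, Kummer-compatible} identification of $\rH^1(G_K, V_p T_D)$ with the $\bQ_p$-rationalisation of the valuation lattice while carrying the correct $\bQ$-structure $V_{D,\fm}$, including the ramification-index normalisation hidden in $[\tilde\cD_{\fm}]$ and the passage from geometric points $D(K^s)$ to closed points of $\tilde\cD_{\fm}$. Once that identification and the factorisation through the tropicalisation are established, the actual commutativity is the essentially formal torus computation supplied by \Cref{lem:sigma-of-principal}.
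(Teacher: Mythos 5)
Your proposal is correct in substance and has the same skeleton as the paper's proof: reduce the target to the toric part via $\rH^0(G_K,V_pJ)=\rH^1(G_K,V_pJ)=0$, identify $\rH^1(G_K,V_pT_D)$ by Kummer theory with the valuation data (which is where $\ell\neq p$ kills the units and produces the $\bQ$-form $V_{D,\fm}$), and verify commutativity on the torus using \Cref{lem:sigma-of-principal}. The packaging differs: the paper works with pro-$p$ completions, defining $\varphi=\iota\circ\kappa\circ\beta$ where $\beta\colon A_{D,\fm}\to T_D(K)^{\wedge p}_{\bQ_p}$ is a uniformiser-section, and gets the descent from $J_Y(K)$ to the torus for free from right-exactness of completion together with $J(K)^{\wedge p}_{\bQ_p}=0$; you instead work with the maximal compact subgroup and the ``tropicalisation'' quotient $\Lambda$, which is the same mechanism made explicit, since $T_D(K)^{\wedge p}_{\bQ_p}\cong\Lambda\otimes\bQ_p$ when $\ell\neq p$. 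Your version buys a self-contained proof of $\rH^1(G_K,V_pJ)=0$ (Tate duality plus the local Euler characteristic), which the paper merely asserts, and makes the role of compactness transparent; the paper's completion formalism is slicker because the structural facts about compacts are absorbed into the functor.

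Two steps of yours need patching, though neither is fatal. First, in Step 2 you assert that the tropicalisation extends to a surjection $J_Y(K)\twoheadrightarrow\Lambda$ with kernel the maximal compact $C$, i.e.\ that $J_Y(K)=T_D(K)+C$; the justification ``$J(K)$ is compact, so the abelian quotient contributes nothing'' does not prove that $C$ surjects onto the image of $J_Y(K)$ in $J(K)$. What is easy is that $T_D(K)+C$ is open of finite index (its quotient is discrete and a continuous image of a compact group, hence finite), and this suffices: both $\kappa$ and $\varphi_{\bQ}\circ\sigma_{\fm}$ land in torsion-free divisible groups, so agreement on a finite-index subgroup forces agreement on all of $J_Y(K)$ — but you should say this, since as written Step 3 uses exact surjectivity. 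Second, your argument that $\sigma_{\fm}(C)=0$ invokes local constancy of $\sigma_{\fm}\circ\AJ_{P_0}$ to claim $\sigma_{\fm}$ is continuous on $J_Y(K)$; this does not follow (elements of $J_Y(K)$ are divisor classes, not points of $Y$), and continuity of $\sigma_{\fm}$ is nowhere established. Fortunately no topology is needed: an open pro-$\ell$ subgroup $C_0\subseteq C$ is an abelian pro-$\ell$ group, hence a $\bZ_\ell$-module on which every prime $q\neq\ell$ acts invertibly, so any abstract homomorphism $C_0\to A_{D,\fm}$ has $q$-divisible image inside a finitely generated torsion-free group and is therefore zero, and the remaining finite quotient dies in a torsion-free group as you say. (For $\kappa$ the continuity you invoke is genuinely needed and is standard, since $p^n J_Y(K)$ is open when $\ell\neq p$.)
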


\begin{proof}
We work with the integral structure $A_{D,\fm}$ defined in \eqref{def:Z-structure-VD} and construct a $\bZ$-linear map $\varphi\colon A_{D,\fm} \to \rH^1(G_K, V_p J_Y)$.
The theorem then follows by setting $\varphi_{\bQ} \coloneqq \varphi\otimes\bQ$.
We construct $\varphi \coloneqq \iota\circ\kappa\circ\beta$ as follows: %
first we use that $\rH^i(G_K, V_p J) = 0$ for $i = 0,1$ so that the map induced by $T_D \hookrightarrow J_Y$ is an isomorphism $\iota \colon \rH^1(G_K, V_p T_D) \xrightarrow{~\sim~} \rH^1(G_K, V_p J_Y)$.
Secondly, Kummer theory for the extensions $k(Q)/K$ gives the Kummer isomorphism $\kappa \colon T_D(K)^{\wedge p}_{\bQ_p} \xrightarrow{~\sim~} \rH^1(G_K, V_p T_D)$.
Here, $T_D(K)^{\wedge p}_{\bQ_p} = T_D(K)^{\wedge p} \otimes_{\bZ_p} \bQ_p $ and $T_D(K)^{\wedge p} = (\varprojlim_n T_D(K)/p^n J_Y(K))$ is the pro-$p$ completion of $T_D(K)$.
Thirdly, picking uniformisers for the fields $k(Q)$ gives a map $\beta\colon A_{D,\fm} \to T_D(K)^{\wedge p}_{\bQ_p}$, which is independent of the choice of uniformisers because the characteristic of the residue field of $K$ is different from $p$, thus Kummer theory for $k(Q)$ is given by its valuation.
Note that $(A_{D,\fm})^{\wedge p}_{\bQ_p} = A_{D,\fm} \otimes_{\bZ} \bQ_p$ and that $\beta$ induces an isomorphism $\beta \colon (A_{D,\fm})^{\wedge p}_{\bQ_p} \xrightarrow{~\sim~} T_D(K)^{\wedge p}_{\bQ_p}$.

The commutativity of \eqref{diag:Kummer-compare} now follows from the functoriality of the Kummer maps and of $p$-adic completion together with the diagram
\begin{equation*}
	\begin{tikzcd}
	T_D(K) \arrow[r, hook] \arrow[d] & J_Y(K) \arrow[d, "\sigma_{\fm}"] \\
	T_D(K)^{\wedge p}_{\bQ_p}        & {A_{D,\fm}}, \arrow[l, "\beta"]  
	\end{tikzcd}
\end{equation*}
which is commutative by \Cref{lem:sigma-of-principal}.
\end{proof}

Now let $\cX$ be a $D$-transversal model of $X$ over $R$.
Let us compare the image of $\fS_{\fm}(P_0,\Sigma_{\fm})$ under $\varphi$ with the \emph{cohomological} local Selmer set $\fS_{\fm}^{\coh}(P_0,\Sigma_{\fm})$ defined in \cite[\S 6.1]{betts:effective}.
If $\Sigma_{\fm}$ is an integral (resp.\ rational) reduction type, the latter is defined as the Zariski closure of the image of $\cY(R)_{\Sigma_{\fm}}$ (resp.\ $Y(K)_{\Sigma_{\fm}}$) under the composition 
$\kappa \circ \AJ_{P_0} \colon Y(K) \to J_Y(K) \to \rH^1(G_K, V_p J_Y)$, where the vector space $\rH^1(G_K, V_p J_Y)$ is viewed as an affine $\bQ_p$-scheme in the obvious way.

\begin{cor}
	\label{lem:compare-local-Selmer}
	The set $\fS_{\fm}^{\coh}(P_0,\Sigma_{\fm})$ is equal to the Zariski closure of $\varphi_{\bQ}(\fS_{\fm}(P_0,\Sigma_{\fm}))$, so is either a singleton or an affine line.
	Moreover, inside $J_Y(K)$ we have
	\begin{align}
	\label{eq:Selmer-preimages-compare}
		\kappa^{-1}(\fS_{\fm}^{\coh}(P_0,\Sigma_{\fm})) = \sigma_{\fm}^{-1}(\fS_{\fm}(P_0,\Sigma_{\fm})).
	\end{align}
\end{cor}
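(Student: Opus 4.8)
The plan is to deduce both assertions from the commutative triangle of \Cref{kummer-comparison}, which expresses $\kappa = \varphi_{\bQ}\circ\sigma_{\fm}$ with $\varphi_{\bQ}$ injective (it becomes an isomorphism after $\otimes_{\bQ}\bQ_p$). First I would rewrite the defining image of the cohomological Selmer set: since $\kappa\circ\AJ_{P_0} = \varphi_{\bQ}\circ(\sigma_{\fm}\circ\AJ_{P_0})$, the set $\fS_{\fm}^{\coh}(P_0,\Sigma_{\fm})$ is the Zariski closure of the image of $\cY(R)_{\Sigma_{\fm}}$ (resp.\ $Y(K)_{\Sigma_{\fm}}$) under $\varphi_{\bQ}\circ\sigma_{\fm}\circ\AJ_{P_0}$. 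By \Cref{thm:abel-jacobi-rational-reduction-type} this image lies in $\varphi_{\bQ}(\fS_{\fm}(P_0,\Sigma_{\fm}))$, which at once gives the inclusion $\fS_{\fm}^{\coh}(P_0,\Sigma_{\fm})\subseteq\overline{\varphi_{\bQ}(\fS_{\fm}(P_0,\Sigma_{\fm}))}$.

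For the reverse inclusion and the singleton/line dichotomy I would split into the two cases of \Cref{def:local-selmer-set-rational}. If $\Sigma_{\fm}\in\cC_{\fm}^0$ is a component, then $\fS_{\fm}(P_0,\Sigma_{\fm})$ is a single point and $\sigma_{\fm}\circ\AJ_{P_0}$ is constant on $Y(K)_{\Sigma_{\fm}}$; as $\Sigma_{\fm}$ carries a smooth $k$-point, Hensel's lemma lifts it to a $K$-point of this reduction type, so the image is exactly this point and its closure is the singleton $\varphi_{\bQ}(\fS_{\fm}(P_0,\Sigma_{\fm}))$. If $\Sigma_{\fm}\in\cC_{\fm}^1$ is cuspidal, the key point is that $\sigma_{\fm}\circ\AJ_{P_0}$ actually realises infinitely many of the classes $m\,[\Sigma_{\fm}]+v_0$ occurring in \eqref{eq:Sm-rational-C1}. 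Using $D$-transversality at $\Sigma_{\fm}$ I would fix a local equation for $\cD$ there and, by $p$-adic approximation inside the residue disc of $\Sigma_{\fm}$, produce for each $m\geq 1$ a point $P\in Y(K)_{\Sigma_{\fm}}$ with $\cP.\tilde\cD = m\,[\Sigma_{\fm}]$. Since $\varphi_{\bQ}([\Sigma_{\fm}])\neq 0$ (the case $[\Sigma_{\fm}]=0$ being the trivial single rational cusp of \Cref{rem:rank-leq1-subgroup}), these are infinitely many distinct collinear points, hence Zariski dense in the affine line $\overline{\varphi_{\bQ}(\fS_{\fm}(P_0,\Sigma_{\fm}))}$; this yields the claimed equality together with the dichotomy.

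For the preimage identity I would again invoke $\kappa=\varphi_{\bQ}\circ\sigma_{\fm}$ and injectivity of $\varphi_{\bQ}$ to write $\kappa^{-1}(\fS_{\fm}^{\coh}) = \sigma_{\fm}^{-1}\bigl(\varphi_{\bQ}^{-1}(\fS_{\fm}^{\coh})\bigr)$. The inclusion $\sigma_{\fm}^{-1}(\fS_{\fm})\subseteq\kappa^{-1}(\fS_{\fm}^{\coh})$ is then purely formal. In the component case $\varphi_{\bQ}^{-1}$ of the singleton equals $\fS_{\fm}$ by injectivity, so both sides agree. The cuspidal case is the delicate one: because $\varphi_{\bQ}$ is $\bQ$-linear and an isomorphism after $\otimes\bQ_p$, a $\bQ$-rational class lands in the $\bQ_p$-line $\fS_{\fm}^{\coh}$ exactly when it lies on the $\bQ$-line $\bQ\cdot[\Sigma_{\fm}]+v_0$, so $\varphi_{\bQ}^{-1}(\fS_{\fm}^{\coh}) = \bQ\cdot[\Sigma_{\fm}]+v_0$, which is strictly larger than the integral coset $\fS_{\fm}=\bZ\cdot[\Sigma_{\fm}]+v_0$. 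The whole content of the reverse inclusion therefore reduces to the \emph{integrality} assertion that $\sigma_{\fm}(J_Y(K))$ meets this rational cusp-line only in $\fS_{\fm}$, i.e.\ that no class of $J_Y(K)$ maps to a non-integral multiple of $[\Sigma_{\fm}]$ over $v_0$.

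I expect this integrality to be the main obstacle. A priori $\sigma_{\fm}$ only takes values in the enlarged lattice $A_{D,\fm}=\tfrac1a(Z_0(\tilde\cD)/[\tilde\cD_{\fm}])_{\tf}$, and its vertical correction $\Phi$ genuinely acquires denominators whenever the class meets a nontrivial component group of the model (already for an $I_2$-fibre with a puncture on each component one sees a half-integral $[\Sigma_{\fm}]$-coefficient). Controlling the pure $[\Sigma_{\fm}]$-direction thus cannot be done by the naïve estimate on denominators of $\Phi$; the argument I would pursue reduces the question to the coefficient of the component $\cpt(\Sigma_{\fm})$ in $\Phi$ and then exploits the compatibility of the $\bZ$-structure $A_{D,\fm}$ with the integral ($T_p$-)Kummer map underlying \Cref{kummer-comparison}, so that the integral structure of Betts' local Selmer set matches $\bZ\cdot[\Sigma_{\fm}]$ on the nose. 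This is the step on which I would spend the most care, and where the precise integral refinement of the comparison map, rather than its rational shadow $\varphi_{\bQ}$, does the real work.
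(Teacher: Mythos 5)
Your argument for the first assertion coincides with the paper's own proof: in the component case the paper also lifts a smooth $k$-point via Hensel, and in the cuspidal case it takes a local equation $t_Q$ of $\cQ$ at $\Sigma_{\fm}$, which by $D$-transversality reduces to a uniformiser and parametrises the residue disc, so that $i_{\Sigma_{\fm}}(\cP,\cQ)=v_K(t_Q(P))$ realises exactly $\bZ_{>0}\cdot[\Sigma_{\fm}]$ plus the translate --- precisely your approximation argument. Where you part ways is the preimage identity \eqref{eq:Selmer-preimages-compare}: the paper disposes of it in one line, citing \eqref{eq:Sm-rational-C1} and \Cref{kummer-comparison}, and never engages with the integrality question you isolate. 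As you correctly observe, commutativity and injectivity of $\varphi_{\bQ}$ only yield the inclusion $\sigma_{\fm}^{-1}(\fS_{\fm})\subseteq\kappa^{-1}(\fS_{\fm}^{\coh})$ together with $\kappa^{-1}(\fS_{\fm}^{\coh})=\sigma_{\fm}^{-1}\bigl(v_0+\bQ\cdot[\Sigma_{\fm}]\bigr)$ in the cuspidal case.

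The gap in your proposal is the final, deferred integrality step, and it cannot be filled: pushed to the end, your own $I_2$ scenario refutes it. Take $K$ an $\ell$-adic field ($\ell\neq p$), $X$ an elliptic curve with split reduction of type $I_2$, minimal regular model with components $V,W$ (so $i_{\fm}(V,V)=i_{\fm}(W,W)=-2$, $i_{\fm}(V,W)=2$), two rational cusps $Q_1,Q_2$ whose closures meet $V$ resp.\ $W$ transversally in smooth $k$-points $x_1,x_2$, and integral points $P_0,P'$ reducing onto $V$ resp.\ $W$. Then one may take $\Phi(P'-P_0)=\tfrac12 W$, so $\sigma_{\fm}(P'-P_0)=\tfrac12[x_2]=-\tfrac12[\Sigma_{\fm}]$ in $V_{D,\fm}\cong\bQ$, where $\Sigma_{\fm}$ is the cuspidal type at $x_1$, $v_0=0$ and $\fS_{\fm}(P_0,\Sigma_{\fm})=\bZ\cdot[\Sigma_{\fm}]$. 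Since $\rH^1(G_K,V_pJ_Y)$ is one-dimensional here by \Cref{kummer-comparison}, the set $\fS_{\fm}^{\coh}(P_0,\Sigma_{\fm})$ is the full affine line, hence $\kappa^{-1}(\fS_{\fm}^{\coh})=J_Y(K)$ contains $[P'-P_0]$, while $\sigma_{\fm}^{-1}(\fS_{\fm})$ does not: the class you would need to exclude really occurs. Equivalently, the two cuspidal types at $x_1$ and $x_2$ have the same $\fS^{\coh}$ but disjoint nonempty sets $\sigma_{\fm}^{-1}(\fS_{\fm}(P_0,-))$ (coordinates $\bZ$ versus $\bZ+\tfrac12$ along $[\Sigma_{\fm}]$), so \eqref{eq:Selmer-preimages-compare} cannot hold for both. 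So you have located a step that the paper's proof glosses over, and that step is not merely delicate but false in general; no integral refinement of $\varphi$ can rescue it, because the failure happens already in $\sigma_{\fm}(J_Y(K))\subseteq A_{D,\fm}$. What survives of the second assertion is the inclusion $\sigma_{\fm}^{-1}(\fS_{\fm})\subseteq\kappa^{-1}(\fS_{\fm}^{\coh})$, the identity $\kappa^{-1}(\fS_{\fm}^{\coh})=\sigma_{\fm}^{-1}\bigl(v_0+\bQ\cdot[\Sigma_{\fm}]\bigr)$ that your reduction actually proves, and the stated equality in cases where the vertical corrections contribute integrally in the $[\Sigma_{\fm}]$-direction (e.g.\ good reduction, or $\cD$ meeting a single component as in \Cref{selmer-sets-constant}); note that the paper's later use of the corollary, in \eqref{eq:Selmer-compare}, only requires the unproblematic inclusion.
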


\begin{proof}
The first and last assertion follow from the commutativity of \eqref{diag:Kummer-compare} together with \Cref{thm:abel-jacobi-integral-reduction-type} and \Cref{thm:abel-jacobi-rational-reduction-type} once we know that $\sigma_{\fm}^{-1}(\fS_{\fm}(P_0,\Sigma_{\fm}))$ is sufficiently large.	
If $\Sigma_{\fm}$ is non-cuspidal, it admits a smooth $k$-point, so by Hensel's Lemma there exists an $R$-point $P \in \cY(R)$ reducing onto $\Sigma_{\fm}$, hence the singleton set $\fS_{\fm}(P_0,\Sigma_{\fm})$ (resp.\ $\fS_{\fm}^{\coh}(P_0,\Sigma_{\fm})$) contains $\sigma_{\fm}(\AJ_{P_0}(P))$ (resp.\ $\kappa(\AJ_{P_0}(P))$) as desired.
Now assume that $\Sigma_{\fm} \in (\cX_{\fm}^{\sm} \cap \cQ)(k)$ is a $k$-point on the horizontal extension of a cusp $Q$ defining a cuspidal reduction type. 
Let $t_Q$ be a local equation for~$\cQ$ near~$\Sigma_{\fm}$, then $t_Q$ reduces to a uniformiser at~$\Sigma_{\fm}$ by $D$-transversality, hence $t_Q$ defines a bijection between the residue disc of $\Sigma_{\fm}$ in $\cX$ and $\fm$.
Let $P$ be a $K$-rational point in this residue disc.
Then
\[
i_{\Sigma_{\fm}}(\cP,\cQ)  = \ord_{\Sigma_{\fm}}(t_Q\vert_{\cP}) = v_K(t_Q(P)), %
\]
by \cite[Lemma~III.2.2]{lang:arakelov}.
As $t_Q$ parametrises the residue disc, \eqref{eq:sigma-on-C1} shows that
\begin{align*}
\sigma_{\fm}(\AJ_{P_0}(Y(K)_{\Sigma_{\fm}})) &= 
\bZ_{>0} \cdot [\Sigma_{\fm}] - \cP_0.\tilde{\cD} + \Phi(\cpt(\Sigma_{\fm}) - \cpt_{\fm}(P_0)).\tilde{\cD} \subseteq \fS_{\fm}(P_0,\Sigma_{\fm}).%
\end{align*}
We conclude using \eqref{eq:Sm-rational-C1} and \Cref{kummer-comparison}, which also shows that $\fS_{\fm}^{\coh}(P_0,\Sigma_{\fm})$ is an affine line.
\end{proof}

\begin{rem}
	\Cref{lem:compare-local-Selmer} reproves and sharpens \cite[Lemma 6.1.4]{betts:effective} in the case $U = V_p J_Y$.
	Moreover, \cite[Remark 1.1.3]{BD:refined} prove an ``independence of $p$'' result similar to \Cref{kummer-comparison} for the nonabelian Kummer map.
\end{rem}

\subsubsection{The case $\ell=p$}

Now let $K/\bQ_p$ be finite. %
For a $\bQ_p$-linear representation $V$ of $G_K$, we write $\rH^1_f(G_K,V)\subseteq \rH^1(G_K,V)$ for the subspace of crystalline classes.
We abbreviate $\rH^1/\rH^1_f(G_K,V)$ for $\rH^1(G_K,V)/\rH^1_f(G_K,V)$ and $\bar{\kappa}$ for the composition of $\kappa$ with $\rH^1(G_K,V) \to \rH^1/\rH^1_f(G_K,V)$.

\begin{thm}
	\label{kummer-comparison-p}
	There is a $\bQ$-linear map $\varphi_{\bQ}\colon V_{D,\fm} \to \rH^1/\rH^1_f(G_K, V_p J_Y)$ making the diagram
	\begin{equation} 
	\label{diag:Kummer-compare-p}
	\begin{tikzcd}[column sep=small]
	& J_Y(K) \arrow[dl,"\sigma_{\fm}"'] \drar{\bar{\kappa}} & \\
	V_{D,\fm} \arrow[rr,"\varphi_{\bQ}"] && \rH^1/\rH^1_f(G_K, V_p J_Y)
	\end{tikzcd}
	\end{equation}
	commute.
\end{thm}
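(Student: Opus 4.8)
The plan is to build $\varphi_{\bQ}$ out of the toric part of $J_Y$, following the proof of \Cref{kummer-comparison} but now landing in the crystalline quotient. Applying $\rH^1/\rH^1_f(G_K,-)$ to $0\to V_pT_D\to V_pJ_Y\to V_pJ\to 0$ and writing $\bar\iota$ for the induced map $\rH^1/\rH^1_f(G_K,V_pT_D)\to\rH^1/\rH^1_f(G_K,V_pJ_Y)$, I would set $\varphi_{\bQ}\coloneqq\bar\iota\circ\psi$, where $\psi\colon V_{D,\fm}\hookrightarrow V_{D,\fm}\otimes_{\bQ}\bQ_p\xrightarrow{\sim}\rH^1/\rH^1_f(G_K,V_pT_D)$ is the toric identification. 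By Shapiro's lemma the latter reduces to $\bG_m$ over the completions of the residue fields $k(Q)$, where $\rH^1_f(G_K,\bQ_p(1))$ is the unit part and the quotient $\rH^1/\rH^1_f$ is the group of valuations; dividing by the diagonal $\bQ_p(1)$ produces exactly the relation $[\tilde\cD_{\fm}]$, so that $\rH^1/\rH^1_f(G_K,V_pT_D)\cong V_{D,\fm}\otimes\bQ_p$ and, by \Cref{lem:sigma-of-principal}, $\bar\kappa|_{T_D(K)}$ becomes $\psi\circ\sigma_{\fm}|_{T_D(K)}$. (As the residue characteristic is now $p$, the uniformiser map $\beta$ of \Cref{kummer-comparison} depends on choices, but changing a uniformiser alters it by a unit, whose class dies in $\rH^1/\rH^1_f$; thus $\bar\kappa\circ\beta$ is still well defined.) This yields commutativity of \eqref{diag:Kummer-compare-p} on the subgroup $T_D(K)$.

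To extend commutativity to all of $J_Y(K)$, I would argue by lifting, working in the $p$-adically completed, $\bQ_p$-linearised setting already used in \Cref{kummer-comparison}. Fix $F\in J_Y(K)$. Since the Kummer image of an abelian variety is crystalline, i.e.\ $\rH^1_f(G_K,V_pJ)=J(K)\otimes\bQ_p$ (Bloch--Kato, using $K/\bQ_p$ finite), the class $\kappa(F)$ projects to a crystalline class in $\rH^1(G_K,V_pJ)$. Lifting this class along $\rH^1_f(G_K,V_pJ_Y)\to\rH^1_f(G_K,V_pJ)$ and using that $\iota$ is injective and exact at $\rH^1$ — both consequences of $\rH^0(G_K,V_pJ)=0$ — I can write $\kappa(F)=c+\kappa_{J_Y}(\tau)$ with $c$ crystalline and $\tau\in T_D(K)\otimes\bQ_p$. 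Consequently $\bar\kappa(F)=\bar\kappa(\tau)=\varphi_{\bQ}(\sigma_{\fm}(\tau))$ by the toric case, and the desired identity $\bar\kappa(F)=\varphi_{\bQ}(\sigma_{\fm}(F))$ reduces to showing $\sigma_{\fm}(F)=\sigma_{\fm}(\tau)$, equivalently $\sigma_{\fm}(F-\tau)=0$ for the crystalline class $F-\tau$.

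The crux — and the step I expect to be the main obstacle — is therefore the implication $\kappa(G)\in\rH^1_f(G_K,V_pJ_Y)\Rightarrow\sigma_{\fm}(G)=0$, i.e.\ $\ker(\bar\kappa)\subseteq\ker(\sigma_{\fm})$. This is precisely where the intersection-theoretic invariant $\sigma_{\fm}$ must be matched with $p$-adic Hodge theory at the place $\fm$ itself (now $\ell=p$), rather than at an auxiliary prime as in \Cref{kummer-comparison}. The natural route is via the Néron (locally of finite type) model of $J_Y$: a class is crystalline exactly when the point reduces into the identity component, while $\sigma_{\fm}$ computes, rationally, the image in the component group, whose toric part is governed by the cusps and matches $V_{D,\fm}$ (compare Raynaud and Bosch--Lorenzini). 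Granting that $\sigma_{\fm}$ indeed equals this component-group homomorphism — which is the essential content for points with nontrivial image in $J$ — crystallinity forces the component-group class to vanish, hence $\sigma_{\fm}(G)=0$.

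The remaining ingredients are standard $p$-adic Hodge theory for the semistable representation $V_pJ_Y$: the exactness of $\rH^1(V_pT_D)\xrightarrow{\iota}\rH^1(V_pJ_Y)\to\rH^1(V_pJ)$ (from $\rH^0(V_pJ)=0$) and the surjectivity of $\rH^1_f(V_pJ_Y)\to\rH^1_f(V_pJ)$, both instances of the strict compatibility of the Bloch--Kato local conditions with the defining exact sequence. Once the component-group identification of the previous paragraph is in hand, these assemble into the commutativity of \eqref{diag:Kummer-compare-p}, and $\varphi_{\bQ}\coloneqq\bar\iota\circ\psi$ is the required $\bQ$-linear map.
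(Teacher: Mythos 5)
Your construction of $\varphi_{\bQ}$ and the commutativity on the toric part (valuations via Kummer theory for the cusp fields, with the uniformiser ambiguity dying in $\rH^1/\rH^1_f$) agree with the paper's proof. The divergence — and the genuine gap — is in your extension step. You reduce everything to the inclusion $\ker(\bar\kappa)\subseteq\ker(\sigma_{\fm})$, i.e.\ ``crystalline $\Rightarrow\sigma_{\fm}=0$'', and you justify it only by \emph{granting} two unproven claims: (a) that $\sigma_{\fm}$ agrees with the component-group homomorphism of the N\'eron lft-model of $J_Y$, and (b) that a Kummer class is crystalline exactly when the point reduces into the identity component. Claim (a) appears nowhere in the paper and is a substantive theorem in its own right ($\sigma_{\fm}$ is defined by intersection theory on a regular model of the \emph{curve}, including the vertical corrections $\Phi(F)$; matching it with the N\'eron model of $J_Y$ at the prime $p$ is of comparable difficulty to the statement being proved). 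Claim (b) is false as stated for the abelian part: by Bloch--Kato the \emph{entire} Kummer image of $J(K)$ lands in $\rH^1_f(G_K,V_pJ)$ regardless of the component group (finiteness of that group rescues you only rationally), and for the semiabelian $J_Y$ at $\ell=p$ the direction you need (crystalline $\Rightarrow$ trivial toric valuation) is precisely the content that requires proof. Note also that your target inclusion is strictly stronger than anything the paper establishes: the paper derives only the \emph{reverse} inclusion $\sigma_{\fm}^{-1}(\{0\})\subseteq\kappa^{-1}(\rH^1_f)$, and as a \emph{corollary} of the theorem (\Cref{lem:compare-local-Selmer-p}). Your inclusion is equivalent to the theorem together with injectivity of $\varphi_{\bQ}$, and that injectivity amounts to the exactness statement $\rH^1_f(G_K,V_pJ_Y)\cap\rH^1(G_K,V_pT_D)=\rH^1_f(G_K,V_pT_D)$, which is not a formal diagram chase: the obstruction $\rH^0(G_K,V_pJ\otimes\Bcris)=D_{\cris}(V_pJ)$ is nonzero, so the ``strict compatibility of Bloch--Kato conditions'' you invoke (also for the surjectivity $\rH^1_f(V_pJ_Y)\to\rH^1_f(V_pJ)$ used in your lifting step) would itself need a genuine $p$-adic Hodge-theoretic computation.

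The paper avoids this entirely by a splitting trick, which is the idea missing from your attempt. Working with the completed sequence \eqref{eq:generalised-jacobian-exact-sequence-Qp}, one chooses a \emph{single} $\bQ_p$-linear splitting $\chi\colon J(K)^{\wedge p}_{\bQ_p}\to J_Y(K)^{\wedge p}_{\bQ_p}$ whose image lies in $\ker(\sigma_{\fm})$ — possible because $\sigma_{\fm}$ is already surjective on the toric part $T_D(K)^{\wedge p}_{\bQ_p}$, so any splitting can be corrected by a map into $T_D(K)^{\wedge p}_{\bQ_p}$ — and whose image lies in $\ker(\bar\kappa)$, using Bloch--Kato's isomorphism $\kappa\colon J(K)^{\wedge p}_{\bQ_p}\xrightarrow{\sim}\rH^1_f(G_K,V_pJ)$, which makes $\bar\kappa$ vanish on the abelian quotient. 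Writing $\hat F=\hat G+\chi(\hat H)$ with $\hat G$ toric, both $\varphi\circ\sigma_{\fm}$ and $\bar\kappa$ then kill $\chi(\hat H)$ and agree on $\hat G$ by the toric case (\Cref{lem:sigma-of-principal}), giving commutativity of \eqref{diag:Kummer-compare-p} without ever needing the implication ``crystalline $\Rightarrow\sigma_{\fm}=0$'' on all of $\ker(\bar\kappa)$ — that implication is only required on the image of one well-chosen splitting, where it holds by construction. To repair your proof you would either have to import this trick or genuinely prove your claims (a) and (b), neither of which is a quotable standard fact in the form you use them.
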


\begin{proof}
	We again work with the integral structure $A_{D,\fm}$, construct a $\bZ$-linear map $\varphi\colon A_{D,\fm} \to \rH^1/\rH^1_f(G_K, V_p J_Y)$, and set $\varphi_{\bQ} \coloneqq \varphi\otimes\bQ$.
	We construct $\varphi$ as follows:
	Kummer theory for the extensions $k(Q)/K$ gives the Kummer isomorphism $\kappa \colon T_D(K)^{\wedge p}_{\bQ_p} \xrightarrow{~\sim~} \rH^1(G_K, V_p T_D)$ identifying crystalline classes with the $\kappa$-images of integral points.
	Thus $\bar{\kappa}\colon T_D(K)^{\wedge p}_{\bQ_p} \to \rH^1/\rH^1_f(G_K, V_p T_D)$ is given by taking valuations.
	Picking uniformisers for the fields $k(Q)$ gives a map $\beta\colon A_{D,\fm} \to T_D(K)^{\wedge p}_{\bQ_p}$ such that $\bar{\kappa}\circ\beta \colon A_{D,\fm} \to T_D(K)^{\wedge p}_{\bQ_p}$ is independent of the choice of uniformisers and an isomorphism after tensoring with $\bQ_p$.
	Then define $\varphi\coloneqq \iota\circ\bar{\kappa}\circ\beta$, where $\iota$ is induced from $V_p T_D \to V_p J_Y$.
	
	To show the commutativity of \eqref{diag:Kummer-compare-p}, we use that \eqref{eq:generalised-jacobian-exact-sequence} gives the short exact sequence
	\begin{equation}
	\label{eq:generalised-jacobian-exact-sequence-Qp}
	0 \to T_D(K)^{\wedge p}_{\bQ_p} \to J_Y(K)^{\wedge p}_{\bQ_p} \to J(K)^{\wedge p}_{\bQ_p} \to 0
	\end{equation}
	of finite dimensional $\bQ_p$-vector spaces.
	For the Jacobian $J$, the Kummer map $\kappa\colon J(K)^{\wedge p}_{\bQ_p} \to \rH^1_f(G_K,V_p J)$ is an isomorphism \cite[p.\ 352]{BK90} and hence $\bar{\kappa}\colon J(K)^{\wedge p}_{\bQ_p} \to \rH^1/\rH^1_f(G_K,V_p J)$ is the zero map.
	Thus the image of any splitting $\chi\colon J(K)^{\wedge p}_{\bQ_p} \to J_Y(K)^{\wedge p}_{\bQ_p}$ of \eqref{eq:generalised-jacobian-exact-sequence-Qp} is contained in the kernel of $\bar{\kappa}\colon J_Y(K)^{\wedge p}_{\bQ_p} \to \rH^1/\rH^1_f(G_K,V_p J_Y)$.
	As $\sigma_{\fm}\colon T_D(K)^{\wedge p}_{\bQ_p} \to (A_{D,\fm})^{\wedge p}_{\bQ_p}$ is surjective, we can choose a splitting $\chi$ of \eqref{eq:generalised-jacobian-exact-sequence-Qp} in such a way that its image is contained in the kernel of $\sigma_{\fm}$.
	The commutativity of \eqref{diag:Kummer-compare-p} now follows from the commutativity of 
	\begin{equation} 
	\label{diag:Kummer-compare-p-proof}
	\begin{tikzcd}[column sep=small]
	& J_Y(K)^{\wedge p}_{\bQ_p} \arrow[dl,"\sigma_{\fm}"'] \drar{\bar{\kappa}} & \\
	(A_{D,\fm})^{\wedge p}_{\bQ_p} \arrow[rr,"\varphi"] && \rH^1/\rH^1_f(G_K, V_p J_Y),
	\end{tikzcd}
	\end{equation}
	which we can show by hand:
	if $\hat{F}\in J_Y(K)^{\wedge p}_{\bQ_p}$, let $\hat{H}\in J(K)^{\wedge p}_{\bQ_p}$ denote its image, let $\hat{G} \coloneqq \hat{F} - \chi(\hat{H})\in T_D(K)^{\wedge p}_{\bQ_p}$ and write $\hat{F} = \hat{G} + \chi(\hat{H})$.
	Then
	\[
		\varphi\sigma_{\fm}(\hat{F}) = \varphi\sigma_{\fm}(\hat{G}) + \varphi\sigma_{\fm}\chi(\hat{H}) = \bar{\kappa}(\hat{G}) = \bar{\kappa}(\hat{F}) - \bar{\kappa}\chi(\hat{H}) = \bar{\kappa}(\hat{F}),
	\]
	where we used the functoriality of $\bar{\kappa}$ and that the image of $\chi$ is in the kernels of $\sigma_{\fm}$ and of $\bar{\kappa}$.
\end{proof}

The corollary below follows immediately from the commutativity of \eqref{diag:Kummer-compare-p}.

\begin{cor}
	\label{lem:compare-local-Selmer-p}
	Inside $J_Y(K)$ we have
	\begin{align*}
	\label{eq:Selmer-preimages-compare-p} \sigma_{\fm}^{-1}(\{0\}) \subseteq \kappa^{-1}(\rH^1_f(G_K,V_p J_Y)).
	\end{align*}
\end{cor}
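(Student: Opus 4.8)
The plan is to chase an arbitrary element through the commutative triangle \eqref{diag:Kummer-compare-p} furnished by \Cref{kummer-comparison-p}. First I would fix a class $[F] \in J_Y(K)$ lying in $\sigma_{\fm}^{-1}(\{0\})$, so that by definition $\sigma_{\fm}([F]) = 0$ in $V_{D,\fm}$. The goal is to show that its Kummer image $\kappa([F])$ is crystalline.

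Next I would apply the commutativity relation $\varphi_{\bQ}\circ\sigma_{\fm} = \bar\kappa$ read off from \eqref{diag:Kummer-compare-p}. Since $\varphi_{\bQ}$ is $\bQ$-linear, it sends $0$ to $0$, so $\bar\kappa([F]) = \varphi_{\bQ}(\sigma_{\fm}([F])) = \varphi_{\bQ}(0) = 0$ in $\rH^1/\rH^1_f(G_K, V_p J_Y)$. By the very definition of $\bar\kappa$ as the composite of $\kappa$ with the quotient projection $\rH^1(G_K, V_p J_Y)\to \rH^1/\rH^1_f(G_K, V_p J_Y)$, the vanishing of $\bar\kappa([F])$ means precisely that $\kappa([F])$ lies in the kernel of that projection, which is $\rH^1_f(G_K, V_p J_Y)$. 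Hence $[F] \in \kappa^{-1}(\rH^1_f(G_K, V_p J_Y))$, and as $[F]$ was arbitrary this yields the asserted inclusion.

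I do not expect any genuine obstacle in this argument: it is a pure diagram chase, and all of the substantive content—the construction of the factoring map $\varphi_{\bQ}$, the choice of a splitting $\chi$ whose image lies in the kernels of both $\sigma_{\fm}$ and $\bar\kappa$, and the identification of crystalline Kummer classes on the toric part with images of integral points—has already been carried out in the proof of \Cref{kummer-comparison-p}. The corollary simply extracts the consequence that the kernel of the ``motivic'' map $\sigma_{\fm}$ is contained in the preimage of the crystalline subspace under $\kappa$.
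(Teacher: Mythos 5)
Your proposal is correct and matches the paper exactly: the paper states that the corollary ``follows immediately from the commutativity of \eqref{diag:Kummer-compare-p}'', and your diagram chase---$\bar\kappa([F]) = \varphi_{\bQ}(\sigma_{\fm}([F])) = 0$, hence $\kappa([F]) \in \rH^1_f(G_K, V_p J_Y)$---is precisely the intended argument. Nothing further is needed.
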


\section{Global theory: Selmer subsets of the generalised Jacobian}
\label{sec:global-theory}

Let $K$ be a number field, let $X$ be a smooth projective curve over~$K$, let $D \neq \emptyset$ be a finite set of closed points of~$X$ and set $Y = X \smallsetminus D$. The ring of integers of~$K$ is denoted by~$\cO_K$. We start by defining a $\bQ$-vector space~$V_D$ and a homomorphism $\sigma\colon J_Y(K) \to V_D$. In \Cref{sec:S-integral-reduction-types} we define $S$-integral reduction types with respect to a regular model over~$\cO_K$ and use the map~$\sigma$ to constrain the image of points of a fixed reduction type under an Abel--Jacobi map $Y \to J_Y$.

\begin{defn}
	\label{def:normalisation-global}
	Let $\pi\colon \tilde\cD \to \Spec(\cO_K)$ be the normalisation of $\Spec(\cO_K)$ in~$D$. Concretely, if $D = \{Q_1,\ldots,Q_r\}$ and $\cO_{k(Q_i)}$ denotes the ring of integers of the residue field $k(Q_i)$, then
	\[ \tilde \cD = \coprod_{i=1}^r \Spec(\cO_{k(Q_i)}). \]
	We also write $\tilde\cQ_i = \Spec(\cO_{k(Q_i)})$.
\end{defn}

\subsection{The global $D$-intersection map}
\label{sec:global-D-intersection-map}

Consider the group $Z_0(\tilde \cD)$ of zero-cycles on~$\tilde \cD$. Any zero-cycle on $\Spec(\cO_K)$ (i.e.\ a fractional ideal of $\cO_K$) pulls back along $\pi\colon \tilde \cD \to \Spec(\cO_K)$ to a zero-cycle on~$\tilde \cD$.

\begin{defn}
	\label{def:V-global}
	Define the $\bQ$-vector space
	\[ V_D \coloneqq Z_0(\tilde \cD)/\pi^* Z_0(\Spec(\cO_K)) \otimes_{\bZ} \bQ. \]
\end{defn}

The local $D$-intersection map of \Cref{sec:intersection map} generalises to the global situation as follows. Choose a regular model $\cX$ of $X$ over~$\cO_K$. With the same definitions as in \Cref{sec:intersection-numbers}, we have intersection numbers $i_x(E,F)$ at closed points of~$\cX$ and we have an intersection cycle
\[ E.F = \sum_x i_x(E,F) [x] \]
for any two ($\bQ$-)divisors on~$\cX$ without common component in their support. We also have $\fq$-intersection numbers $i_{\fq}(E,F)$ for all primes~$\fq$ of $\cO_K$, defined whenever $E$ and $F$ have no common horizontal component in their support. Via the valuative criterion of properness, the inclusion $D \hookrightarrow \cX$ extends to a morphism of $\cO_K$-schemes
\[ \nu\colon \tilde \cD \to \cX, \]
and we define the generalised intersection numbers $i_x(E,\tilde \cD)$ at closed points~$x$ of~$\tilde \cD$ and the generalised intersection cycle $E.\tilde \cD = \sum_x i_x(E,\tilde \cD) [x] \in Z_0(\tilde \cD)$ just like in the local case. 

\begin{lemma}
	\label{lem:global-phi}
	For any degree-0 divisor~$F$ on~$Y$ with horizontal extension~$\cF$ on~$\cX$ there exists a vertical $\bQ$-divisor $\Phi(F)$ on~$\cX$ such that for all primes~$\fq$ of~$\cO_K$, the divisor $\Psi(F) \coloneqq \cF + \Phi(F)$ has $\fq$-intersection number zero with all vertical divisors over~$\fq$. The divisor $\Phi(F)$ is unique up to adding $\bQ$-divisors which are pulled back along $\cX \to \Spec(\cO_K)$.
\end{lemma}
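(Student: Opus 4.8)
The plan is to reduce the global statement to the local \Cref{lem:phi-formula} prime by prime, exploiting that all but finitely many fibres of $\cX \to \Spec(\cO_K)$ are irreducible.

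First I would record that the $\fq$-intersection number is computed entirely inside the fibre $\cX_\fq$: a vertical $\bQ$-divisor supported over a prime $\fq' \neq \fq$ has $\fq$-intersection number zero with every divisor over $\fq$, since the two supports lie in disjoint fibres. Hence the defining condition on $\Psi(F)$ decouples into one condition per prime, and each prime may be treated independently. Next I would note the global analogue of the degree identity from the proof of \Cref{lem:phi-formula}: for every prime $\fq$ one has $i_\fq(\cF, \cX_\fq) = \deg(F) = 0$, because $\cF$ is flat over $\cO_K$ and meets each fibre in its generic degree $\deg(F)$, cf.\ \cite[Proposition~III.2.5]{lang:arakelov}. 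In particular, at every prime $\fq$ whose fibre $\cX_\fq$ is irreducible over $k(\fq)$ — that is, has a single vertical prime component $V$ — writing $\cX_\fq = m_\fq V$ gives $m_\fq\, i_\fq(\cF, V) = 0$, so $\cF$ already has $\fq$-intersection number zero with the unique vertical prime over $\fq$, and I may take the local contribution to $\Phi(F)$ to be zero there. Since $\cX$ has geometrically integral generic fibre, only finitely many fibres $\cX_\fq$ are reducible; let $S_{\mathrm{bad}}$ denote this finite set of primes.

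Then, for each $\fq \in S_{\mathrm{bad}}$, I would apply \Cref{lem:phi-formula} over the localisation $\cO_{K,\fq}$, which is a discrete valuation ring over which $\cX$ base-changes to a regular model with special fibre $\cX_\fq$. This yields a vertical $\bQ$-divisor $\Phi_\fq(F) = L^+_\fq(\cpt_\fq(F))$ supported on $\cX_\fq$ for which $\cF + \Phi_\fq(F)$ has $\fq$-intersection number zero with every vertical prime over $\fq$. Setting $\Phi(F) \coloneqq \sum_{\fq \in S_{\mathrm{bad}}} \Phi_\fq(F)$, a finite sum of $\bQ$-divisors with pairwise disjoint vertical supports, the decoupling from the first step shows that $\Psi(F) = \cF + \Phi(F)$ satisfies the required condition at every prime.

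For uniqueness, I would observe that at each prime the ambiguity in the local construction is precisely a $\bQ$-multiple of the full fibre $\cX_\fq$, by the corank-$1$ statement invoked in \Cref{lem:phi-formula}. As $\cX_\fq = \pi^*[\fq]$ is the pullback of the point $\fq \in \Spec(\cO_K)$, the total ambiguity is exactly a $\bQ$-divisor pulled back along $\pi\colon \cX \to \Spec(\cO_K)$; conversely any such $\pi^*\delta$ has $\fq$-intersection number zero with all vertical divisors (the global version of \cite[Proposition~III.3.2]{lang:arakelov}), so adding it preserves the defining property. I expect the only point requiring care to be the clean decoupling of the intersection condition across primes together with the finiteness of $S_{\mathrm{bad}}$, which is what allows the infinite family of fibrewise conditions to be solved by a single finitely-supported $\bQ$-divisor $\Phi(F)$.
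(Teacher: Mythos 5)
Your proof is correct and follows essentially the same route as the paper: local existence of $\Phi_{\fq}(F)$ prime by prime (the paper invokes Lang's Theorem~III.3.6, whose constructive form is \Cref{lem:phi-formula}), vanishing of the local contribution at all but finitely many primes, and uniqueness up to $\bQ$-multiples of the fibres $\cX_{\fq} = \pi^*[\fq]$. The only cosmetic difference is that you justify finiteness via irreducibility of almost all fibres (using the degree identity $i_{\fq}(\cF,\cX_{\fq}) = \deg(F) = 0$), whereas the paper takes $\Phi_{\fq}(F) = 0$ at primes of good reduction --- the same degree argument underlies both.
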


\begin{proof}
	By \cite[Theorem~III.3.6]{lang:arakelov}, there exists, for each prime~$\fq$ of~$\cO_K$, a vertical $\bQ$-divisor $\Phi_{\fq}(F)$ on~$\cX$ over the prime~$\fq$ such that $\Psi_{\fq}(F) \coloneqq \cF + \Phi_{\fq}(F)$ has $\fq$-intersection number zero with every vertical divisor on~$\cX$ over~$\fq$. For primes~$\fq$ where~$\cX$ has good reduction, we can take $\Phi_{\fq}(F) = 0$. Then $\Phi(F) \coloneqq \sum_{\fq} \Phi_{\fq}(F)$ has the required properties. The uniqueness statement follows from the fact that $\Phi_{\fq}(F)$ is unique up to adding $\bQ$-multiples of the complete fibre~$\cX_{\fq}$.
\end{proof}

We first define $\sigma\colon J_Y(K) \to V_D$ on the level of divisors. Given a degree-0 divisor~$F$ on~$Y$, choose $\Phi(F)$ as in \Cref{lem:global-phi} and set $\Psi(F) \coloneqq \cF + \Phi(F)$. Then define
\begin{equation}
	\label{eq:global-sigma}
	\sigma(F) \coloneqq \Psi(F).\tilde \cD, %
\end{equation}
which is well-defined modulo $\pi^*Z_0(\Spec(\cO_K))$ and hence as an element of $V_D$ by the uniqueness statement in \Cref{lem:global-phi}.
The global variant of \Cref{lem:sigma-of-principal} reads as follows:

\begin{lemma}
	\label{lem:global-sigma-on-principal}
	Let $F = \div(f)$ be a principal divisor with support in~$Y$. Then we have
	\[ \sigma(F) = \sum_{Q \in |D|}\; \sum_{x \in |\tilde\cQ|} \ord_x(f(Q)) [x], \]
	where in the inner sum $x$ runs over the closed points of $\tilde\cQ = \Spec(\cO_{k(Q)})$. 
\end{lemma}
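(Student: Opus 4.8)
The plan is to reduce the global statement to the already-established local computation in \Cref{lem:sigma-of-principal}. The key point is that the global $D$-intersection map was constructed in \eqref{eq:global-sigma} by choosing a single vertical $\bQ$-divisor $\Phi(F)$ that simultaneously achieves $\fq$-intersection number zero with all vertical divisors over \emph{every} prime $\fq$ (\Cref{lem:global-phi}), and that the target group $V_D = Z_0(\tilde\cD)/\pi^*Z_0(\Spec(\cO_K)) \otimes_{\bZ}\bQ$ assembles the local contributions at each prime. So the strategy is: exhibit a globally valid choice of $\Psi(F)$ for a principal divisor, compute the intersection cycle $\Psi(F).\tilde\cD$ at each point, and read off the formula.

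\emph{First} I would take $\Psi(F) \coloneqq \div_{\cX}(f)$, the full divisor on $\cX$ cut out by the rational function $f \in k(X)^\times$. By \cite[Theorem III.3.1]{lang:arakelov} a principal divisor has $\fq$-intersection number zero with all vertical divisors, for every prime $\fq$, so this $\Psi(F)$ is an admissible choice in the sense of \Cref{lem:global-phi}: its horizontal part is the closure $\cF$ of $F$, and the vertical correction is automatically balanced at all primes at once. This is the crucial simplification that lets us bypass the prime-by-prime construction of $\Phi(F)$ and makes the computation purely local.

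\emph{Second}, with this choice the definition \eqref{eq:global-sigma} gives
\[
\sigma(F) = \div_{\cX}(f).\tilde\cD = \sum_{x \in |\tilde\cD|} i_x(\div_{\cX}(f),\tilde\cD)\,[x],
\]
and by the definition of the generalised intersection number \eqref{eq:intersection-Dtilde} together with $\nu^*\div_{\cX}(f) = \div(\nu^* f)$, each coefficient equals $\ord_x(\nu^* f)$. Here $\nu^* f$ restricted to the component $\tilde\cQ = \Spec(\cO_{k(Q)})$ of $\tilde\cD$ is precisely the value $f(Q) \in k(Q)^\times$. Splitting the sum over the connected components $\tilde\cD = \coprod_{Q\in|D|}\tilde\cQ$ of the normalisation then yields
\[
\sigma(F) = \sum_{Q\in|D|}\;\sum_{x\in|\tilde\cQ|}\ord_x(f(Q))\,[x],
\]
which is the claimed formula. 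This mirrors the local proof of \Cref{lem:sigma-of-principal} verbatim, replacing the single special fibre $\tilde\cD_{\fm}$ by the full arithmetic curve $\tilde\cD$ over $\Spec(\cO_K)$.

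\emph{The main point to verify}, and where I would be most careful, is that this global $\Psi(F) = \div_{\cX}(f)$ is genuinely legitimate for the definition of $\sigma$: one must confirm that it differs from the abstractly chosen $\Phi(F)$ of \Cref{lem:global-phi} only by a $\bQ$-divisor pulled back from $\Spec(\cO_K)$, so that the two give the same class in $V_D$. Since both have horizontal part $\cF$ and both satisfy the $\fq$-balancing condition at every $\fq$, their difference is vertical and, by the uniqueness clause of \Cref{lem:global-phi}, a sum of $\bQ$-multiples of complete fibres $\cX_{\fq}$; intersecting such a fibre with $\tilde\cD$ produces exactly an element of $\pi^*Z_0(\Spec(\cO_K))$, which is killed in $V_D$. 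Beyond this bookkeeping the argument is routine, and the only genuine obstacle is ensuring the value $f(Q)$ is well-defined in $k(Q)^\times$ (i.e.\ $f$ has neither zero nor pole along any cusp), which holds because $F = \div(f)$ is assumed to have support in $Y = X\smallsetminus D$.
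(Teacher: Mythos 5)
Your proposal is correct and follows exactly the paper's route: the paper's proof also takes $\Psi(F) = \div_{\cX}(f)$, citing \cite[Theorem III.3.1]{lang:arakelov} for its admissibility, and then repeats the computation of \Cref{lem:sigma-of-principal} by splitting the intersection cycle over the components $\tilde\cQ$ of $\tilde\cD$. Your extra verification that this choice agrees with the abstract $\Phi(F)$ of \Cref{lem:global-phi} modulo pullbacks from $\Spec(\cO_K)$ is a sound (and welcome) bit of bookkeeping that the paper leaves implicit.
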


\begin{proof}
	We can choose $\Psi(F) = \div_{\cX}(f)$ and proceed as in the proof of \Cref{lem:sigma-of-principal}.
\end{proof}

As in the local case, it follows from \Cref{lem:global-sigma-on-principal} that the map~\eqref{eq:global-sigma} is well-defined on $D$-equivalence classes of divisors, i.e.\ it descends to $J_Y(K)$.

\begin{defn}
	\label{def:global-sigma}
	We call the homomorphism $\sigma\colon J_Y(K) \to V_D$, $[F] \mapsto \Psi(F).\tilde\cD$ the \emph{global $D$-intersection map}.
\end{defn}

\Cref{thm:sigma-independent-of-model} easily carries over to the global situation, so the global $D$-intersection map is also independent of the choice of regular model~$\cX$.
The global map~$\sigma$ can be viewed as the direct sum of local maps~$\sigma_{\fq}$ as follows. We have a natural isomorphism $Z_0(\tilde\cD) \cong \bigoplus_{\fq} Z_0(\tilde\cD_{\fq})$, with $\fq$ running over all primes of~$\cO_K$ and $\cD_{\fq}$ denoting the fibre of~$\cD$ over~$\fq$. With $V_{D,\fq} \coloneqq Z_0(\tilde\cD_{\fq})/[\tilde \cD_{\fq}] \otimes_{\bZ} \bQ$ this induces an isomorphism
\begin{equation}
	\label{eq:V-direct-sum}
	V_D \cong \bigoplus_{\fq} V_{D,\fq}.
\end{equation}
For each prime~$\fq$ of $\cO_K$, the theory of \Cref{sec:local-theory} applied to the (uncompleted) discrete valuation ring~$\cO_{K,\fq}$ yields a local $D$-intersection map $\sigma_{\fq}\colon J_Y(K) \to V_{D,\fq}$.

\begin{prop}
	\label{thm:global-sigma-sum-of-local}
	Under the identification~\eqref{eq:V-direct-sum}, the global $D$-intersection map $\sigma\colon J_Y(K)\to V_D$ equals the sum of the local maps~$\sigma_{\fq}\colon J_Y(K) \to V_{D,\fq}$ over all primes~$\fq$ of~$\cO_K$.
\end{prop}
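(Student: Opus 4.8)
The plan is to establish the asserted equality of maps into $V_D \cong \bigoplus_{\fq} V_{D,\fq}$ one $\fq$-component at a time: I will show that for each prime $\fq$ of $\cO_K$ the $\fq$-component of $\sigma$ agrees with the local map $\sigma_{\fq}$. Both $\sigma$ and $\bigoplus_{\fq}\sigma_{\fq}$ are homomorphisms $J_Y(K) \to V_D$ computed by applying a fixed recipe to a representative $F \in \Div^0(Y)$ of a given class, so it suffices to compare their values on each such $F$.

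The first step is to exploit the model-independence of the local $D$-intersection map (\Cref{thm:sigma-independent-of-model}) in order to compute $\sigma_{\fq}$ on the base change $\cX_{\cO_{K,\fq}} \coloneqq \cX \times_{\cO_K} \cO_{K,\fq}$ of our chosen global model. This is a regular model of $X$ over the discrete valuation ring $\cO_{K,\fq}$: flatness, integrality and projectivity pass to the base change, and its local rings are localisations of those of $\cX$, hence regular. Its unique closed fibre is $\cX_{\fq}$, so its vertical divisors are exactly the vertical divisors over~$\fq$ on~$\cX$. Correspondingly, the local normalisation $\tilde\cD$ of \Cref{def:normalisation} over $\cO_{K,\fq}$ is the semilocalisation of the global $\tilde\cD$ at~$\fq$: its closed points are precisely the points of the fibre $\tilde\cD_{\fq}$, the class $[\tilde\cD_{\fm}]$ becomes $[\tilde\cD_{\fq}]$, and the extended morphism $\nu$ is obtained from the global one by base change. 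Under these identifications the local vector space matches the summand $V_{D,\fq}$ appearing in~\eqref{eq:V-direct-sum}.

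The key step is then to match the auxiliary vertical divisors. By \Cref{lem:global-phi} the global $\Phi(F)$ decomposes as $\Phi(F) = \sum_{\fq} \Phi_{\fq}(F)$ with $\Phi_{\fq}(F)$ supported over~$\fq$ and characterised by requiring $\cF + \Phi_{\fq}(F)$ to have $\fq$-intersection number zero with every vertical divisor over~$\fq$. On $\cX_{\cO_{K,\fq}}$ this is precisely the condition defining the divisor entering $\sigma_{\fq}(F)$, so $\Psi_{\fq}(F) \coloneqq \cF + \Phi_{\fq}(F)$ serves as the local $\Psi(F)$. Now intersection multiplicities are local in nature: for a closed point $x$ of $\tilde\cD$ lying over~$\fq$, the number $i_x(\Psi(F),\tilde\cD) = \ord_x(\nu^* f)$ depends only on a local equation $f$ of $\Psi(F)$ near $\nu(x) \in \cX_{\fq}$. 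The components $\Phi_{\fq'}(F)$ with $\fq' \neq \fq$ are vertical over other primes and do not pass through $\nu(x)$, so only $\Psi_{\fq}(F)$ contributes. Hence the $\fq$-component of the cycle $\sigma(F) = \Psi(F).\tilde\cD$ equals $\Psi_{\fq}(F).\tilde\cD_{\fq}$, which is exactly $\sigma_{\fq}(F)$.

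The only point requiring care is the bookkeeping of the two quotients. Globally, $\Phi(F)$ is ambiguous up to divisors pulled back from $\Spec(\cO_K)$; the $\fq$-part of such an ambiguity is a $\bQ$-multiple of the fibre $\cX_{\fq}$, and $\cX_{\fq}.\tilde\cD_{\fq} = [\tilde\cD_{\fq}]$ vanishes in $V_{D,\fq}$. This is exactly the local ambiguity, so the two well-definedness statements are compatible under the isomorphism~\eqref{eq:V-direct-sum} and no further obstruction arises. Everything else rests on the locality of intersection numbers and the compatibility of normalisation with localisation, both of which are routine.
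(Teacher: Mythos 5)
Your proof is correct and follows essentially the same route as the paper's: base-change the global regular model to $\cO_{K,\fq}$, observe that $\nu$ and the normalisation are compatible with this base change, decompose $\Phi(F) = \sum_{\fq}\Phi_{\fq}(F)$ as in \Cref{lem:global-phi}, and use locality of intersection numbers to identify the $\fq$-component of $\Psi(F).\tilde\cD$ with $\sigma_{\fq}(F)$. The paper's proof is just a terser version of yours; your extra care about the compatibility of the two quotient ambiguities is a fine (and correct) addition that the paper leaves implicit.
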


\begin{proof}
	If $\cX$ is a regular model of~$X$ over~$\cO_K$, then for each prime~$\fq$ of~$\cO_K$, the base change $\cX_{\cO_{K,\fq}} = \cX \otimes_{\cO_K} \cO_{K,\fq}$ is a regular model over~$\cO_{K,\fq}$. The maps $\nu_{\fq}\colon \tilde\cD_{\cO_{K,\fq}} \to \cX_{\cO_{K,\fq}}$ used to define the generalised intersection numbers on $\cX_{\cO_{K,\fq}}$ are obtained by base change from $\nu\colon \tilde \cD \to \cX$. For a degree-0 divisor~$F$ on $Y$ and $\Psi(F) = \cF + \sum_{\fq} \Phi_{\fq}$ as in \Cref{lem:global-phi}, the part of $\sigma(F) = \Psi(F).\tilde\cD$ belonging to the fibre over~$\fq$ is $(\cF + \Phi_{\fq}(F)).\tilde\cD_{\cO_{K,\fq}}$, which is precisely $\sigma_{\fq}(F)$.
\end{proof}

\begin{prop}
	\label{thm:ker-sigma}
	The kernel of $\sigma\colon J_Y(K) \to V_D$ is finitely generated of rank
	\[ \rank \ker(\sigma) = n_1(D) + n_2(D) - \#|D| - \rank \cO_K^\times + r, \]
	where $n_1(D) = \#D(\bR)$ is the number of real cusps, $n_2(D)$ is the number of conjugate pairs of complex cusps, and $r = \rank J(K)$ is the Mordell--Weil rank of the Jacobian of ~$X$.
\end{prop}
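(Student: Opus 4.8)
The plan is to split $\sigma$ along the defining sequence of the generalised Jacobian and treat the toric and abelian parts separately. Taking $K$-points of \eqref{eq:generalised-jacobian-exact-sequence} gives a left-exact sequence $0 \to T_D(K) \to J_Y(K) \xrightarrow{q} J(K)$ whose cokernel injects into $\rH^1(K,T_D)$. The sequence $1 \to \bG_m \to \Res_{D/K}\bG_m \to T_D \to 1$ together with Hilbert 90 and Shapiro's lemma shows $\rH^1(K,\Res_{D/K}\bG_m) = \prod_Q \rH^1(k(Q),\bG_m) = 0$, so $\rH^1(K,T_D) \hookrightarrow \mathrm{Br}(K)$ is torsion. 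As $J(K)$ is finitely generated of rank $r$, the image $q(J_Y(K))$ is of finite index in $J(K)$, hence of rank $r$. The whole proof will then hinge on the short exact sequence $0 \to \ker(\sigma)\cap T_D(K) \to \ker(\sigma) \xrightarrow{q} q(\ker\sigma) \to 0$: I would compute the rank of the left term via unit groups, show $q(\ker\sigma)$ still has rank $r$, and read off finite generation and the rank formula additively.

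First I would pin down $\sigma|_{T_D}$. By \Cref{lem:global-sigma-on-principal}, the element of $T_D(K)$ represented by $a = (a_Q)_Q \in \prod_Q k(Q)^\times$ (the class of $\div(f)$ for any $f$ with $f(Q) = a_Q$) is sent by $\sigma$ to the image in $V_D$ of $\bigoplus_Q \div_{k(Q)}(a_Q) \in Z_0(\tilde\cD)$. Thus $\sigma|_{T_D}$ is induced by the divisor map $\bigoplus_Q \div_{k(Q)}$ followed by projection to $B := Z_0(\tilde\cD)/\pi^\ast Z_0(\Spec\cO_K)$ and $\otimes\bQ$. The torsion $B_{\tors}$ is finite: fibrewise $B \cong \bigoplus_\fq Z_0(\tilde\cD_\fq)/\langle[\tilde\cD_\fq]\rangle$, and the $\fq$-summand $\bZ^m/\langle(e_x)_x\rangle$ has torsion $\bZ/\gcd_x e_x$, which is trivial at the all-but-finitely-many primes unramified in every $k(Q)$. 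Writing $\lambda\colon T_D(K) \to B$ for the integral map, $\ker(\sigma|_{T_D}) = \lambda^{-1}(B_{\tors})$ differs from $\ker\lambda$ by a finite group. Since $\pi^\ast$ is injective on ideal groups, an element $[a]\in\ker\lambda$ determines an ideal $c$ with $\pi^\ast c = \bigoplus_Q\div_{k(Q)}(a_Q)$, well-defined in $\Cl(K)$ once the diagonal $K^\times$ is divided out; this yields an exact sequence $0 \to U/E \to \ker\lambda \to \Cl(K)$ with $U = \prod_Q \cO_{k(Q)}^\times$ and $E = \cO_K^\times$. Dirichlet's unit theorem then makes $\ker\lambda$, hence $\ker(\sigma|_{T_D})$, finitely generated of rank $\rank U - \rank E = \sum_Q\bigl(r_1(k(Q)) + r_2(k(Q)) - 1\bigr) - \rank\cO_K^\times = n_1(D) + n_2(D) - \#|D| - \rank\cO_K^\times$, using $\sum_Q r_1(k(Q)) = n_1(D)$ and $\sum_Q r_2(k(Q)) = n_2(D)$.

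To handle the abelian part I would observe that $\sigma|_{T_D}$ is rationally surjective onto $V_D$: the maps $\div_{k(Q)}$ have finite cokernels $\Cl(k(Q))$, so $\bigoplus_Q\div_{k(Q)}$ has image of finite index in $Z_0(\tilde\cD)$ and spans $V_D$ after $\otimes\bQ$. Hence for any $x \in J_Y(K)$ there are $t \in T_D(K)$ and $N \geq 1$ with $\sigma(Nx - t) = 0$, giving $N\,q(x) = q(Nx - t) \in q(\ker\sigma)$; thus $q(\ker\sigma)$ has finite index in $q(J_Y(K))$ and rank $r$. Feeding the finitely generated groups $\ker(\sigma)\cap T_D(K)$ and $q(\ker\sigma) \subseteq J(K)$ into the short exact sequence of the first paragraph yields that $\ker(\sigma)$ is finitely generated with $\rank\ker\sigma = \bigl(n_1(D) + n_2(D) - \#|D| - \rank\cO_K^\times\bigr) + r$, as claimed.

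I expect the main obstacle to be precisely the tension that $T_D(K)$ is far from finitely generated while $\ker(\sigma)\cap T_D(K)$ is: the infinitely many ``ideal'' directions of $T_D(K)$ are exactly what $\sigma$ detects, and only the unit part $U/E$ survives. Making this rigorous is what forces the two finiteness inputs above, namely that $B_{\tors}$ is finite (so passing to $\otimes\bQ$ costs nothing in rank) and that the residual ambiguity in the ideal $c$, after quotienting by the diagonal $K^\times$, lands in the finite group $\Cl(K)$. The remaining care is the archimedean bookkeeping identifying the total real and complex places of the residue fields $k(Q)$ with $n_1(D)$ and $n_2(D)$.
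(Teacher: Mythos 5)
Your proposal is correct and follows essentially the same route as the paper: you split $\sigma$ along $0 \to T_D \to J_Y \to J \to 0$, identify $\sigma$ restricted to $T_D(K)$ with the divisor map via \Cref{lem:global-sigma-on-principal}, obtain the toric kernel rank $n_1(D)+n_2(D)-\#|D|-\rank\cO_K^\times$ from Dirichlet's unit theorem plus finiteness of $\Cl(K)$, and add the rank $r$ using torsionness of the cokernel of the divisor map — the paper packages exactly these steps into two applications of the snake lemma, working rationally in $V_D$ where you track the integral lattice $B$ and its finite torsion. The one refinement on your side is that you avoid assuming surjectivity of $J_Y(K) \to J(K)$ (which the paper asserts by saying the sequence ``remains exact on the level of $K$-points'') and instead bound the cokernel through $\rH^1(K,T_D) \hookrightarrow \mathrm{Br}(K)$, a torsion group; this is slightly more careful and changes nothing in the rank count.
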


\begin{proof}
	Recall from \Cref{sec:generalised-jacobian} that the generalised Jacobian fits into a short exact sequence
	\[ 0 \to T_D \to J_Y \to J \to 0, \]
	which remains exact on the level of $K$-points. The $K$-points of the torus $T_D$ are $T_D(K) = \cO(D)^\times/K^\times$. An element of $\cO(D)^\times = \prod_{Q \in |D|} k(Q)^\times$ defines a zero-cycle on each $\tilde \cQ$, hence a zero-cycle on~$\tilde\cD$. Elements of~$K^\times$ give rise to zero-cycles which are pulled back from~$\Spec(\cO_K)$. So we get a well-defined divisor map $\div\colon T_D(K) \to V_D$.
	This gives rise to the following diagram with exact rows:
	\[
	\begin{tikzcd}
		0 \rar & T_D(K)	\dar["\div"] \rar & J_Y(K) \dar["\sigma"] \rar & J(K) \rar \dar & 0 \\
		0 \rar & V_D \rar[equal] & V_D \rar & 0 & 
	\end{tikzcd}
	\]
	The left square is commutative. This follows from \Cref{lem:global-sigma-on-principal} and the fact that the map $T_D(K) \to J_Y(K)$ sends a tuple $(a_Q)_Q \in \prod k(Q)^\times$ to the class of the principal divisor $\div(f)$ defined by any rational function~$f$ on~$X$ that interpolates the values $f(Q) = a_Q$ for $Q \in |D|$. The snake lemma yields an exact sequence
	\begin{equation}
		\label{eq:ker-sigma-snake-lemma}
		0 \to \ker(\div) \to \ker(\sigma) \to J(K) \to \coker(\div) \to \coker(\sigma) \to 0.
	\end{equation}
	We claim that $\coker(\div)$ is torsion. Let $c$ be a zero-cycle on~$\tilde \cD$ with $\bQ$-coefficients, representing an element of~$V_D$. Let $m$ be a positive integer such that $mc$ has $\bZ$-coefficients. Write $mc = \sum_{Q\in|D|} c_Q'$ with $c_Q'$ a zero-cycle on $\tilde\cQ = \Spec \cO_{k(Q)}$. Let $h$ be the least common multiple of the class numbers $\#\Cl(k(Q))$ for $Q \in |D|$, so that $h c_Q'$ is a principal divisor on~$\tilde\cQ$, say $\div(a_Q)$ with $a_Q \in k(Q)^\times$. Then $mhc$ is the image of $(a_Q)_{Q}$ under $\div\colon T_D(K) \to V_D$, hence~$c$ is killed by $mh$ in $\coker(\div)$. This shows that the cokernel is torsion. Since $J(K)$ is finitely generated, its image in $\coker(\div)$ is finite, so the exact sequence~\eqref{eq:ker-sigma-snake-lemma} implies
	\begin{equation}
		\label{eq:ker-sigma-sum}
		\rank \ker(\sigma) = \rank \ker(\div) + \rank J(K).
	\end{equation}
	We are reduced to determining the rank of $\ker(\div)$. Consider the following commutative diagram with exact rows
	\[
	\begin{tikzcd}
		0 \rar & K^\times \dar \rar & \cO(D)^\times \rar \dar & T_D(K) \dar["\div"] \rar & 0 \\
		0 \rar & Z_0(\Spec(\cO_K)) \rar & Z_0(\tilde \cD) \rar & V_D
	\end{tikzcd}
	\]
	The snake lemma yields an exact sequence
	\[ 0 \to \cO_K^\times \to \cO(\tilde \cD)^\times \to \ker(\div) \to \Cl(K). \]
	Since the class group $\Cl(K)$ is finite, we get
	\begin{equation}
		\label{eq:ker-div}
		\rank \ker(\div) = \rank \cO(\tilde \cD)^\times - \rank \cO_K^\times.
	\end{equation}
	Finally, the rank of $\cO(\tilde \cD)^\times$ can be computed via Dirichlet's unit theorem. For $Q \in |D|$, let $n_1(Q)$ and $n_2(Q)$ be the number of real embeddings and the number of conjugate pairs of complex embedddings of $k(Q)$, respectively. Then we have
	\begin{align}
		\label{eq:rank-units}
		\rank \cO(\tilde\cD)^\times = \sum_{Q \in |D|} \rank \cO_{k(Q)}^\times = \sum_{Q \in |D|} (n_1(Q) + n_2(Q) - 1) = n_1(D) + n_2(D) - \#|D|. 
	\end{align}
	The claimed rank of $\ker(\sigma)$ is obtained by combining \eqref{eq:ker-sigma-sum}, \eqref{eq:ker-div}, \eqref{eq:rank-units}.
\end{proof}

\subsection{$S$-integral reduction types}
\label{sec:S-integral-reduction-types}

Let $S$ be a finite set of primes of~$\cO_K$. Let $\cX$ be a regular model of~$X$ over the ring of integers $\cO_K$, let $\cD$ be the closure of~$D$ in~$\cX$ and set $\cY = \cX \smallsetminus \cD$. We assume that $\cX$ is $D$-transversal over primes in~$S$, i.e.\ the base change
$\cX_{\cO_{K,\fq}}$ to the local ring~$\cO_{K,\fq}$ is $D$-transversal for all $\fq \in S$ (see \Cref{def:D-normal}).
The set of $S$-integral points $\cY(\cO_{K,S})$ can be partitioned according to their mod-$\fq$ reductions for all primes~$\fq$ as follows. 

\begin{defn}
	\label{def:S-integral-reduction-type}
	An \emph{$S$-integral reduction type} for $(\cX,\cD)$ is a collection $\Sigma = (\Sigma_{\fq})_{\fq}$ consisting of
	\begin{itemize}
		\item for each prime $\fq \not\in S$: an integral mod-$\fq$ reduction type $\Sigma_{\fq}$ for $(\cX,\cD)$, see \Cref{def:integral-reduction-type}; %
		\item for each prime $\fq \in S$: a rational mod-$\fq$ reduction type $\Sigma_{\fq}$ for $(\cX,\cD)$, see \Cref{def:rational-reduction-type}. %
	\end{itemize}
	An $S$-integral point $P \in \cY(\cO_{K,S})$ has reduction type~$\Sigma = (\Sigma_{\fq})_{\fq}$ if for all primes~$\fq$ of~$\cO_K$, the image of~$P$ under the mod-$\fq$ reduction map
	$ \red_{\fq}\colon \cY(\cO_{K,S}) \hookrightarrow \cX(\cO_{K,S}) = \cX(\cO_K) \to \cX(\bF_{\fq}) $
	lies on the component resp.\ is equal to the point~$\Sigma_{\fq}$. The set of all $S$-integral points of reduction type~$\Sigma$ is denoted by $\cY(\cO_{K,S})_{\Sigma}$.
\end{defn}

This defines a partition
\[ \cY(\cO_{K,S}) = \coprod_{\Sigma} \cY(\cO_{K,S})_{\Sigma} \]
with $\Sigma$ running over all $S$-integral reduction types for $(\cX,\cD)$. For a prime~$\fq$ where $\cX$ has good reduction and which is not contained in~$S$, there is only one possible choice for $\Sigma_{\fq}$, namely the unique component of the mod-$\fq$ fibre $\cX_{\fq} \smallsetminus \cD$. In particular, there are only finitely many $S$-integral reduction types for $(\cX,\cD)$.

\subsection{Selmer sets}
\label{sec:selmer-sets}

Let $\cX/\cO_K$ be as before and fix an $S$-integral reduction type $\Sigma = (\Sigma_{\fq})_{\fq}$ for $(\cX,\cD)$. We also fix a base point $P_0 \in Y(K)$.

Recall from §\ref{sec:global-D-intersection-map} the $\bQ$-vector space $V_D = Z_0(\tilde \cD)/\pi^* Z_0(\Spec(\cO_K)) \otimes_{\bZ} \bQ$ and its direct sum decomposition into the local vector spaces $V_{D,\fq} = Z_0(\tilde \cD_{\fq})/[\tilde \cD_{\fq}] \otimes_{\bZ} \bQ$ with $\fq$ running over all primes $\fq$ of~$\cO_K$:
\begin{equation*}
	\label{eq:V-sum-of-Vq}
	V_D \cong \bigoplus_{\fq} V_{D,\fq}.
\end{equation*}
For all primes~$\fq$ we have constructed a subset $\fS_{\fq}(P_0,\Sigma_{\fq}) \subseteq V_{D,\fq}$, see \Cref{def:local-selmer-set-integral} (if $\fq \not \in S$) and \Cref{def:local-selmer-set-rational} (if $\fq \in S$). They provide the local conditions to define a global subset~$\fS(P_0,\Sigma)$ of~$V_D$.

\begin{defn}
	\label{def:S-Sigma-global}
	Define the subset $\fS(P_0,\Sigma)$ of~$V_D$ by
	\[ \fS(P_0,\Sigma) \coloneqq \prod_{\fq} \fS_{\fq}(P_0,\Sigma_{\fq}). \]
	In other words, an element $c = (c_{\fq})_{\fq}$ of~$V_D$ belongs to $\fS(P_0,\Sigma)$ if and only $c_{\fq} \in \fS_{\fq}(P_0,\Sigma_{\fq})$ for all primes~$\fq$.
\end{defn}

Note that $\prod_{\fq} \fS_{\fq}(P_0,\Sigma_{\fq})$ is indeed contained in the direct sum of the vector spaces~$V_{D,\fq}$ since it follows from \Cref{thm:Sm-integral-zero} that $\fS_{\fq}(P_0,\Sigma_{\fq}) = \{0\}$ for all primes $\fq \not\in S$ such that $\cX$ has good reduction at~$\fq$ and $P_0$ is integral at~$\fq$.

\begin{defn}
	\label{def:C-Sigma}
	Denote by $C(\Sigma)$ the set of primes $\fq \in S$ for which $\Sigma_{\fq}$ is cuspidal, i.e.\ an element of $\cC_{\fq}^1 = (\cX_{\fq}^{\sm} \cap \cD)(\bF_{\fq})$. The \emph{cuspidal part} $\Sigma^{\csp}$ of~$\Sigma$ is its image under the projection to $\prod_{\fq \in C(\Sigma)} \cC_{\fq}^1$.
\end{defn}

\begin{prop}
	\label{thm:rank-of-S-Sigma}
	Assume that $D$ does not consist of a single $K$-rational point. Then $\fS(P_0,\Sigma)$ is a translate of a subgroup $U(\Sigma^{\csp})$ of $V_D$ which is free of rank~$\#C(\Sigma)$. 
\end{prop}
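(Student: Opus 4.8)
The plan is to unwind the definition $\fS(P_0,\Sigma) = \prod_{\fq} \fS_{\fq}(P_0,\Sigma_{\fq})$ one factor at a time, using the explicit local descriptions from \Cref{def:local-selmer-set-integral} and \Cref{def:local-selmer-set-rational} together with the direct sum decomposition $V_D \cong \bigoplus_{\fq} V_{D,\fq}$ of~\eqref{eq:V-direct-sum}. The argument is essentially bookkeeping, with a single genuine geometric input needed at the very end.

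First I would split the primes according to whether $\Sigma_{\fq}$ is cuspidal. For $\fq \notin C(\Sigma)$ the reduction type $\Sigma_{\fq}$ is either integral (if $\fq\notin S$) or a component type in $\cC_{\fq}^0$ (if $\fq\in S$), so by \eqref{eq:Sm-integral} resp.~\eqref{eq:Sm-rational-C0} the local Selmer set is a singleton $\{c_{\fq}\}$. For $\fq\in C(\Sigma)$ the type $\Sigma_{\fq}\in\cC_{\fq}^1$ is cuspidal, and \eqref{eq:Sm-rational-C1} gives
\[
\fS_{\fq}(P_0,\Sigma_{\fq}) = c_{\fq} + \bZ\cdot[\Sigma_{\fq}], \qquad c_{\fq} \coloneqq -\cP_0.\tilde\cD + \Phi(\cpt(\Sigma_{\fq}) - \cpt_{\fq}(P_0)).\tilde\cD.
\]
As recorded after \Cref{def:S-Sigma-global}, \Cref{thm:Sm-integral-zero} makes $c_{\fq}=0$ for all but finitely many $\fq$, so $c\coloneqq(c_{\fq})_{\fq}$ is a well-defined element of $V_D$. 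Taking the product over all $\fq$ then yields $\fS(P_0,\Sigma) = c + U(\Sigma^{\csp})$ with $U(\Sigma^{\csp}) \coloneqq \bigoplus_{\fq\in C(\Sigma)} \bZ\cdot[\Sigma_{\fq}]$. This both exhibits $\fS(P_0,\Sigma)$ as a translate of $U(\Sigma^{\csp})$ and makes visible that the subgroup depends only on the cuspidal part~$\Sigma^{\csp}$, as the notation suggests.

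It then remains to identify $\rank U(\Sigma^{\csp})$. Because the generators $[\Sigma_{\fq}]$ for distinct $\fq\in C(\Sigma)$ lie in distinct summands $V_{D,\fq}$ of $V_D$, the sum is automatically direct and $\rank U(\Sigma^{\csp}) = \sum_{\fq\in C(\Sigma)}\rank(\bZ\cdot[\Sigma_{\fq}])$; freeness is automatic, $U(\Sigma^{\csp})$ being a subgroup of a $\bQ$-vector space. The hard part---the only step beyond formal manipulation---is to show each generator is nonzero, i.e.\ $[\Sigma_{\fq}]\neq 0$ in $V_{D,\fq}$. Here I would invoke the local analysis of \Cref{rem:rank-leq1-subgroup}: since $C(\Sigma)\subseteq S$ and $\cX$ is $D$-transversal over~$S$ by hypothesis, that remark applies at each $\fq\in C(\Sigma)$ and shows that $[\Sigma_{\fq}]=0$ would force $D$ to consist of a single $K$-rational point. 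As this is excluded by assumption, every $\bZ\cdot[\Sigma_{\fq}]$ is free of rank~$1$, giving $\rank U(\Sigma^{\csp}) = \#C(\Sigma)$ as claimed.
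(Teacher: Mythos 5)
Your proof is correct and takes essentially the same route as the paper's: its (much terser) proof likewise observes that $\fS_{\fq}(P_0,\Sigma_{\fq})$ is a singleton for $\fq \notin C(\Sigma)$ and a translate of the rank-$1$ subgroup $\bZ\cdot[\Sigma_{\fq}] \subseteq V_{D,\fq}$ for $\fq \in C(\Sigma)$, invoking \Cref{rem:rank-leq1-subgroup} (which uses the $D$-transversality over~$S$ and the assumption on~$D$) for the nonvanishing of $[\Sigma_{\fq}]$. Your extra bookkeeping --- the vanishing $c_{\fq}=0$ for almost all $\fq$ via \Cref{thm:Sm-integral-zero}, and the directness of the sum because the generators lie in distinct summands of $V_D \cong \bigoplus_{\fq} V_{D,\fq}$ --- simply makes explicit what the paper leaves implicit.
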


\begin{proof}
	The assumption on~$D$ ensures that the vector spaces $V_D$ and $V_{D,\fq}$ are non-trivial. For $\fq \in C(\Sigma)$, the set $\fS_{\fq}(P_0,\Sigma_{\fq})$ is a translate of the rank~1 subgroup $\bZ\cdot [\Sigma_{\fq}] \subseteq V_{D,\fq}$, see \Cref{rem:rank-leq1-subgroup}; for all other~$\fq$ it is a singleton.
\end{proof}

\begin{defn}
	\label{def:selmer-set}
	Define the \emph{Selmer set} associated to the reduction type~$\Sigma$ and base point~$P_0$ as the preimage of $\fS(P_0,\Sigma)$ in $J_Y(K)$ under the $D$-intersection map:
	\[ \Sel(P_0,\Sigma) \coloneqq \sigma^{-1}(\fS(P_0,\Sigma)) \subseteq J_Y(K). \]
\end{defn}

\begin{prop}
	\label{thm:map-to-selmer-set}
	The Abel--Jacobi map $\AJ_{P_0}\colon Y(K) \to J_Y(K)$ maps all points of reduction type~$\Sigma$ into $\Sel(P_0,\Sigma)$:
	\[ \AJ_{P_0}(\cY(\cO_{K,S})_{\Sigma}) \subseteq \Sel(P_0,\Sigma). \]
\end{prop}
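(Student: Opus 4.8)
The plan is to deduce this global statement from the local results by exploiting the decomposition of the global $D$-intersection map into its local components. Fix an $S$-integral point $P \in \cY(\cO_{K,S})_{\Sigma}$. By \Cref{def:selmer-set} we have $\Sel(P_0,\Sigma) = \sigma^{-1}(\fS(P_0,\Sigma))$, so it suffices to show that $\sigma(\AJ_{P_0}(P)) = \sigma(P - P_0)$ lies in $\fS(P_0,\Sigma) = \prod_{\fq} \fS_{\fq}(P_0,\Sigma_{\fq})$.

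First I would invoke \Cref{thm:global-sigma-sum-of-local}: under the identification $V_D \cong \bigoplus_{\fq} V_{D,\fq}$, the global map $\sigma$ is the sum of the local maps $\sigma_{\fq}$, so the $\fq$-component of $\sigma(P - P_0)$ is precisely $\sigma_{\fq}(P - P_0)$. Consequently it is enough to prove, for each prime $\fq$ individually, that $\sigma_{\fq}(\AJ_{P_0}(P)) \in \fS_{\fq}(P_0,\Sigma_{\fq})$; membership in the product then follows componentwise. (The product lands in the direct sum because $\fS_{\fq}(P_0,\Sigma_{\fq}) = \{0\}$ for almost all $\fq$, as noted after \Cref{def:S-Sigma-global}.)

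The core of the argument is then a case distinction according to whether $\fq$ belongs to $S$. For $\fq \notin S$, the point $P$ is in particular $\cO_{K,\fq}$-integral and, by \Cref{def:S-integral-reduction-type}, reduces onto the integral reduction type $\Sigma_{\fq} \in \cC_{\fq}^0$; applying \Cref{thm:abel-jacobi-integral-reduction-type} to the regular model $\cX_{\cO_{K,\fq}}$ yields $\sigma_{\fq}(\AJ_{P_0}(P)) \in \fS_{\fq}(P_0,\Sigma_{\fq})$. For $\fq \in S$, I instead regard $P$ simply as a $K$-rational point of $Y$ of rational reduction type $\Sigma_{\fq} \in \cC_{\fq}$; since $\cX$ is assumed $D$-transversal over $S$, the base change $\cX_{\cO_{K,\fq}}$ is a regular $D$-transversal model, and \Cref{thm:abel-jacobi-rational-reduction-type} gives the same conclusion in both the component and the cuspidal cases.

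Combining the two cases shows that every component of $\sigma(\AJ_{P_0}(P))$ lies in the corresponding local set, hence $\sigma(\AJ_{P_0}(P)) \in \fS(P_0,\Sigma)$ and $\AJ_{P_0}(P) \in \Sel(P_0,\Sigma)$. This is essentially a gluing of the local inputs, so I do not expect a serious obstacle; the only delicate points are bookkeeping ones—verifying that the local theory over the uncompleted ring $\cO_{K,\fq}$ is the one computed by the base-changed model $\cX_{\cO_{K,\fq}}$, so that the local $\sigma_{\fq}$ and sets $\fS_{\fq}(P_0,\Sigma_{\fq})$ agree with the components arising in the global $\sigma$ and in $\fS(P_0,\Sigma)$, and remembering to invoke the $D$-transversality hypothesis at the primes of $S$, which is exactly what \Cref{thm:abel-jacobi-rational-reduction-type} requires.
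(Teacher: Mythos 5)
Your proposal is correct and follows essentially the same route as the paper: both reduce to the local statements via \Cref{thm:global-sigma-sum-of-local} and then invoke \Cref{thm:abel-jacobi-integral-reduction-type} at primes outside $S$ and \Cref{thm:abel-jacobi-rational-reduction-type} (using $D$-transversality of $\cX_{\cO_{K,\fq}}$) at primes in $S$. Your extra bookkeeping remarks, e.g.\ that almost all local sets are $\{0\}$ so the product lies in the direct sum, are accurate and consistent with the paper's setup.
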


\begin{proof}
	If $P$ has reduction type~$\Sigma = (\Sigma_{\fq})_{\fq}$ then, by definition, its mod-$\fq$ reduction type is $\Sigma_{\fq}$, for all primes~$\fq$ of~$\cO_K$. By the local theory, specifically \Cref{thm:abel-jacobi-integral-reduction-type} and \Cref{thm:abel-jacobi-rational-reduction-type}, this implies $\sigma_{\fq}(\AJ_{P_0}(P)) \in \fS_{\fq}(P_0,\Sigma_{\fq})$ for all~$\fq$. Since $\sigma_{\fq}$ equals the $\fq$-component of the global map~$\sigma$ (see \Cref{thm:global-sigma-sum-of-local}), the claim follows.
\end{proof}

\begin{prop}
	\label{thm:rank-of-selmer-set}
	Assume that $D$ does not consist of a single $K$-rational point. The Selmer set~$\Sel(P_0,\Sigma)$ is either empty or a translate of a subgroup of $J_Y(K)$ of rank $n_1(D) + n_2(D) - \#|D| - \rank \cO_K^\times + r + \#C(\Sigma)$.
\end{prop}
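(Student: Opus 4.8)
The plan is to show that, whenever $\Sel(P_0,\Sigma)$ is non-empty, it is a translate of the subgroup $\sigma^{-1}(U(\Sigma^{\csp}))$ of $J_Y(K)$, and then to compute the rank of this subgroup by combining \Cref{thm:ker-sigma} with a ``surjectivity up to torsion'' property of the image of~$\sigma$ on the torus part.

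First I would dispose of the translate structure, which is purely formal. By \Cref{thm:rank-of-S-Sigma} we may write $\fS(P_0,\Sigma) = c_0 + U$ for some $c_0 \in V_D$, where $U \coloneqq U(\Sigma^{\csp})$ is free of rank $\#C(\Sigma)$. If $\Sel(P_0,\Sigma) = \sigma^{-1}(\fS(P_0,\Sigma))$ is empty there is nothing to prove; otherwise choose $x_0$ in it. Then $\sigma(x_0) \in c_0 + U$, so $c_0 + U = \sigma(x_0) + U$, and a point $y$ lies in $\Sel(P_0,\Sigma)$ if and only if $\sigma(y - x_0) \in U$. Hence $\Sel(P_0,\Sigma) = x_0 + \sigma^{-1}(U)$, exhibiting the Selmer set as a translate of the subgroup $\sigma^{-1}(U)$.

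Next I would compute $\rank \sigma^{-1}(U)$. Restricting $\sigma$ gives a homomorphism $\sigma^{-1}(U) \to U$ with kernel $\ker(\sigma)$ and image $\sigma(J_Y(K)) \cap U$; since $\ker(\sigma)$ is finitely generated of the rank computed in \Cref{thm:ker-sigma} and $U$ is free of finite rank, the group $\sigma^{-1}(U)$ is finitely generated and rank is additive along this sequence, so
\[
\rank \sigma^{-1}(U) = \rank \ker(\sigma) + \rank\bigl(\sigma(J_Y(K)) \cap U\bigr).
\]
It therefore suffices to prove that $\sigma(J_Y(K)) \cap U$ has full rank $\#C(\Sigma)$ in~$U$, for then adding $\#C(\Sigma)$ to the rank of $\ker(\sigma)$ from \Cref{thm:ker-sigma} yields exactly the asserted value $n_1(D) + n_2(D) - \#|D| - \rank \cO_K^\times + r + \#C(\Sigma)$.

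The main point is this last rank statement, and I would establish it using the torus inclusion $T_D \hookrightarrow J_Y$. Recall from the proof of \Cref{thm:rank-of-S-Sigma} that $U = \bigoplus_{\fq \in C(\Sigma)} \bZ\cdot[\Sigma_{\fq}]$, where each $\Sigma_{\fq} \in (\cX_{\fq}^{\sm}\cap\cD)(\bF_{\fq})$ corresponds, by $D$-transversality, to a unique prime $\mathfrak{P}_{\fq}$ lying over~$\fq$ in the ring of integers $\cO_{k(Q)}$ of the cusp $Q \in |D|$ on whose closure $\Sigma_{\fq}$ lies. Fix $\fq \in C(\Sigma)$ and let $h_{\fq} \geq 1$ be the order of $[\mathfrak{P}_{\fq}]$ in the class group $\Cl(k(Q))$, so that $\mathfrak{P}_{\fq}^{h_{\fq}} = (a)$ for some $a \in k(Q)^\times$. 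The element of $T_D(K) = \cO(D)^\times/K^\times$ with value~$a$ at~$Q$ and~$1$ at all other cusps maps under $T_D(K) \hookrightarrow J_Y(K)$ to a class whose image under $\sigma$ is, by \Cref{lem:global-sigma-on-principal}, exactly $h_{\fq}[\mathfrak{P}_{\fq}] = h_{\fq}[\Sigma_{\fq}]$, supported on the $\fq$-component of $V_D \cong \bigoplus_{\fq} V_{D,\fq}$. By \Cref{rem:rank-leq1-subgroup} (using that $D$ is not a single $K$-rational point) this is a non-zero multiple of the $\fq$-th generator of~$U$. Letting $\fq$ range over $C(\Sigma)$ produces $\#C(\Sigma)$ elements of $\sigma(J_Y(K)) \cap U$ that are $\bQ$-linearly independent, since they lie in distinct summands of the direct sum; hence $\sigma(J_Y(K)) \cap U$ has full rank, completing the argument. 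The only non-formal ingredient is the finiteness of the class groups $\Cl(k(Q))$, which is exactly what guarantees that each cuspidal generator of~$U$ is reached, up to a non-zero integer multiple, by a genuine global point of $J_Y(K)$.
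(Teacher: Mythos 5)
Your proof is correct, and its skeleton coincides with the paper's: both first observe that a non-empty $\Sel(P_0,\Sigma)$ is a translate of $\sigma^{-1}(U)$ for $U = U(\Sigma^{\csp})$, and both then compute $\rank \sigma^{-1}(U)$ by rank additivity over $\ker(\sigma)$, quoting \Cref{thm:ker-sigma}. The one place where you diverge is the key step that the image of $\sigma_U\colon \sigma^{-1}(U) \to U$ has full rank $\#C(\Sigma)$. The paper gets this for free: it observes that $\coker(\sigma_U)$ embeds into $\coker(\sigma)$, which was already shown to be torsion in the proof of \Cref{thm:ker-sigma} (via the torsion-ness of $\coker(\div)$), so $\coker(\sigma_U)$ is finite and $\rank \operatorname{im}(\sigma_U) = \rank U$. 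You instead reprove the needed instance constructively: for each $\fq \in C(\Sigma)$ you use $D$-transversality to identify $\Sigma_{\fq}$ with a prime $\mathfrak{P}_{\fq}$ of $\cO_{k(Q)}$, take $a$ generating $\mathfrak{P}_{\fq}^{h_{\fq}}$, and verify via \Cref{lem:global-sigma-on-principal} that the corresponding torus element maps to $h_{\fq}[\Sigma_{\fq}]$, which is non-torsion by \Cref{rem:rank-leq1-subgroup} under the hypothesis on~$D$. This is the same arithmetic input (finiteness of the class groups $\Cl(k(Q))$) that underlies the paper's torsion-cokernel argument, so the two routes are mathematically equivalent; yours is longer but self-contained and makes explicit \emph{which} global points of $J_Y(K)$ realize the cuspidal directions of~$U$ (a finite-index subgroup of $U$ is hit exactly), whereas the paper's is shorter because the torsion statement was already on record. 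A minor bonus of your write-up is that you spell out the identification of a cuspidal reduction type with a closed point of $\tilde\cD$ via $D$-transversality, which the paper uses implicitly in \Cref{def:local-selmer-set-rational}.
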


\begin{proof}
	By \Cref{thm:rank-of-S-Sigma}, the set $\fS(P_0,\Sigma)$ is a translate of a subgroup $U$ of $V_D$ which has rank~$\#C(\Sigma)$. If $\Sel(P_0,\Sigma)$ is not empty, then it is a translate of the subgroup $\sigma^{-1}(U)$ of $J_Y(K)$. It remains to determine the rank of $\sigma^{-1}(U)$. Let $\sigma_U: \sigma^{-1}(U) \to U$ be the restriction of~$\sigma$ and consider the short exact sequence
	\begin{equation}
		\label{eq:sigmaU-sequence}
		0 \lto \ker(\sigma) \lto \sigma^{-1}(U) \overset{\sigma_U}{\lto} U \lto \coker(\sigma_U) \lto 0.
	\end{equation}
	It was shown in the proof of \Cref{thm:ker-sigma} that the cokernel of $\div\colon T_D(K) \to V_D$ and thus also the cokernel of $\sigma$ are torsion. Since $\coker(\sigma_U)$ embeds into $\coker(\sigma)$, the cokernel of~$\sigma_U$ is torsion as well. As a quotient of~$U$ it is also finitely generated, hence finite. Now the exact sequence~\eqref{eq:sigmaU-sequence} shows
	\[ \rank \sigma^{-1}(U) = \rank \ker(\sigma) + \rank U \]
	and the claim follows from $\rank U = \#C(\Sigma)$ and the computation of the rank of $\ker(\sigma)$ in \Cref{thm:ker-sigma}.
\end{proof}

\begin{cor}
	\label{cor:image-of-AJ}
	Let $\cX/\cO_{K,S}$ be a regular model of $X$ over the ring of $S$-integers. Fix a base point $P_0 \in Y(K)$. The image of the set of $S$-integral points $\cY(\cO_{K,S})$ under the Abel--Jacobi embedding $\AJ_{P_0}\colon Y(K) \to J_Y(K)$ is contained in a finite union of translates of subgroups of rank at most $n_1(D) + n_2(D) - \#|D| - \rank \cO_K^\times + r + \#S$.
\end{cor}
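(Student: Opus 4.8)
The plan is to assemble the corollary from the local-to-global results already in hand, partitioning the $S$-integral points by reduction type and controlling each piece through its Selmer set. The rank bookkeeping is entirely carried out by \Cref{thm:ker-sigma} and \Cref{thm:rank-of-selmer-set}, so the task is mostly one of correctly reducing to their hypotheses.

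First I would reduce to the setting of \Cref{sec:S-integral-reduction-types}, where the model lives over the full ring of integers~$\cO_K$ and is $D$-transversal at the primes in~$S$. Given the regular model $\cX/\cO_{K,S}$, I extend it to a regular model over~$\cO_K$ by gluing in arbitrary regular fibres over the primes of~$S$ and resolving (using that each $\cO_{K,\fq}$ is excellent), and then, by \Cref{thm:D-normal-desingularisation} applied locally at each $\fq \in S$, further blow up so that the model becomes $D$-transversal over~$S$. Since the fibres away from~$S$ are left untouched, the set $\cY(\cO_{K,S})$ is unchanged, and since the global $D$-intersection map is independent of the regular model, the map~$\sigma$ is unchanged as well. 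I would also dispose of the degenerate case where $D$ consists of a single $K$-rational point separately: there $T_D = 0$, so $J_Y = J$ and $J_Y(K) = J(K)$ is finitely generated of rank~$r$; hence the whole image of $\AJ_{P_0}$ lies in the single subgroup $J(K)$, whose rank $r$ is bounded by the claimed quantity, which in this case equals $r + \#S$ by Dirichlet's unit theorem applied to~$k(Q) = K$.

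Assuming now that $D$ is not a single $K$-rational point, the main step is the finite partition
\[
\cY(\cO_{K,S}) = \coprod_{\Sigma} \cY(\cO_{K,S})_{\Sigma},
\]
with $\Sigma$ ranging over the finitely many $S$-integral reduction types for $(\cX,\cD)$. For each~$\Sigma$, \Cref{thm:map-to-selmer-set} gives the inclusion $\AJ_{P_0}(\cY(\cO_{K,S})_{\Sigma}) \subseteq \Sel(P_0,\Sigma)$, and \Cref{thm:rank-of-selmer-set} shows that each non-empty Selmer set is a translate of a subgroup of $J_Y(K)$ of rank exactly $n_1(D) + n_2(D) - \#|D| - \rank \cO_K^\times + r + \#C(\Sigma)$. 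Since $C(\Sigma) \subseteq S$ by \Cref{def:C-Sigma}, we have $\#C(\Sigma) \leq \#S$, so each of these ranks is at most $n_1(D) + n_2(D) - \#|D| - \rank \cO_K^\times + r + \#S$. Taking the union over the finitely many reduction types (discarding those whose Selmer set is empty) exhibits the image of $\cY(\cO_{K,S})$ as contained in a finite union of translates of subgroups of the desired rank.

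I expect the only genuine subtlety to be the bookkeeping in the second paragraph: checking that the model extension over~$S$ and the imposition of $D$-transversality there leave both $\cY(\cO_{K,S})$ and~$\sigma$ intact, and verifying that the degenerate single-point case fits the same numerical bound. Everything of mathematical substance—the rank of $\ker(\sigma)$ and of $\Sel(P_0,\Sigma)$—is already established, so once these routine reductions are in place the corollary is immediate.
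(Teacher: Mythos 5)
Your proposal is correct and follows essentially the same route as the paper's proof: dispose of the case where $D$ is a single $K$-rational point (where $J_Y = J$ has rank $r$), extend the model to a regular one over $\cO_K$ that is $D$-transversal over~$S$ as in \Cref{sec:normal-models}, partition $\cY(\cO_{K,S})$ by $S$-integral reduction types, and apply \Cref{thm:map-to-selmer-set} and \Cref{thm:rank-of-selmer-set} together with $\#C(\Sigma) \leq \#S$. Your explicit verifications—that the degenerate case satisfies the numerical bound via Dirichlet's unit theorem and that the model extension leaves $\cY(\cO_{K,S})$ and $\sigma$ unchanged—are correct details the paper leaves implicit, but they do not constitute a different argument.
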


\begin{proof}
	We can assume that the set of cusps~$D$ does not consist only of a single $K$-rational point, for otherwise the generalised Jacobian of~$Y$ coincides with the Jacobian of the compactification~$X$, whose group of $K$-points has rank~$r$ by definition. Let $\cX$ be a regular model of $X$ and extend it to a regular model over~$\Spec(\cO_K)$ that is $D$-transversal over primes in~$S$, compare \Cref{sec:normal-models}. The set $\cY(\cO_{K,S})$ is partitioned into finitely many subsets $\cY(\cO_{K,S})_{\Sigma}$ by reduction types (\Cref{def:S-integral-reduction-type}), and each non-empty $\cY(\cO_{K,S})_{\Sigma}$ is mapped under the Abel--Jacobi map to a translate of a subgroup of rank $n_1(D) + n_2(D) - \#|D| - \rank \cO_K^\times + r + \#C(\Sigma)$ by \Cref{thm:rank-of-selmer-set}. Here, $C(\Sigma)$ is the set of primes~$\fq \in S$ at which the local reduction type~$\Sigma_{\fq}$ is a point on the closure of a cusp. In particular, $\#C(\Sigma)$ is at most~$\#S$.
\end{proof}

\begin{rem}
	We compare the Selmer set $\Sel(P_0,\Sigma)\subseteq J_Y(K)$ with the \emph{cohomological} Selmer set $\Sel_{\Sigma,U_1} \subseteq \rH^1(G_K,V_p J_Y)$ of \cite[\S 6.2]{betts:effective}, where $U_1=V_p J_Y$ is the abelianisation of the $\bQ_p$-prounipotent fundamental group of $Y$. Assume that all primes $\fq|p$ are primes of good reduction for $(\cX,\cD)$, that $P_0$ is integral at those primes, and that they are not contained in~$S$. The set $\Sel_{\Sigma,U_1}$ is defined as the set of those global cohomology classes whose restriction at primes $\fq|p$ is crystalline, and whose restriction lies in $\fS^{\coh}_{\fq}(P_0,\Sigma_{\fq}) \subseteq \rH^1(G_{K_{\fq}},V_p J_Y)$ for all primes $\fq\nmid p$; see §\ref{sec:comparison-with-kummer-map} for the definition of $\fS_{\fq}^{\coh}(P_0,\Sigma_{\fq})$.
	(\cite[\S 6]{betts:effective} also assumes that the base point $P_0$ is $S$-integral and that $K=\bQ$ but the definition makes sense more generally.)
	Our assumptions ensure that there is only one reduction type $\Sigma_{\fq}$ at primes $\fq\mid p$ and $\fS_{\fq}(P_0,\Sigma_{\fq}) = \{0\}$. %
	Let $\kappa\colon J_Y(K) \to \rH^1(G_K, V_p J_Y)$ denote the global Kummer map, which is defined in the same way as in \eqref{eq:kummer-map} and compatible with the local Kummer maps.
	Now \Cref{lem:compare-local-Selmer} and \Cref{lem:compare-local-Selmer-p} show
	\begin{align}\label{eq:Selmer-compare}
	\Sel(P_0,\Sigma) \subseteq \kappa^{-1}(\Sel_{\Sigma,U_1}).
	\end{align}
	The Selmer set $\Sel(P_0,\Sigma)$ can thus be viewed as a geometric incarnation of the cohomological Selmer set. 
	In this sense our method refines the abelian affine case of \cite{betts:effective} that was also considered in \cite[\S 4]{LLM:LQChabAffine}. In particular, our method is at least as strong as Kim's non-abelian Chabauty method in depth~1 in its refined version by Betts--Dogra \cite{BD:refined}.
\end{rem}

\subsection{Reduction types for more general models}
\label{sec:general-models}

Let~$S$ be a finite set of primes of~$\cO_K$. Let $Y = X \smallsetminus D$ as before and let $\cX$ be a model of $X$ over~$\cO_K$ \cite[Definition~10.1.1]{liu2006algebraic} which is not necessarily regular and not necessarily $D$-transversal over primes in~$S$. Let~$\cD$ be the closure of~$D$ in~$\cX$ and set $\cY = \cX \smallsetminus \cD$. We can still partition the set of $S$-integral points~$\cY(\cO_{K,S})$ into subsets whose images under an Abel--Jacobi map $Y(K) \to J_Y(K)$ are strongly constrained, but we have to use reduction types on a suitably desingularised model.
Let $\pi\colon \cX' \to \cX$ be a morphism of models over~$\cO_K$ where $\cX'$ is regular and $D$-transversal over primes in~$S$.
Such a desingularisation can be obtained as follows: by repeatedly blowing up in the singular locus and normalising, one obtains a regular model \cite[Cor.~8.3.51]{liu2006algebraic}, and then one can use \Cref{thm:D-normal-desingularisation}.
Let $\cD'$ be the closure of~$D$ in~$\cX'$ and set $\cY' = \cX' \smallsetminus \cD'$. We have the inclusion $\cY(\cO_{K,S}) \subseteq \cY'(\cO_{K,S})$, which can be strict, for example if $\pi$ contracts a vertical component onto a point lying on~$\cD$. But by allowing only certain reduction types, we can find $\cY(\cO_{K,S})$ inside $\cY'(\cO_{K,S})$ and thereby reduce the computation of $S$-integral points on~$\cY$ to the case of a model which is regular and $D$-transversal over primes in~$S$. 

\begin{defn}
	\label{def:S-integral-reduction-type-on-desingularisation}
	An \emph{$S$-integral reduction type for $(\cX,\cD)$} (specified on~$\cX'$) is a collection $\Sigma = (\Sigma_{\fq})_{\fq}$ where $\Sigma_{\fq}$ is:
	\begin{itemize}
		\item for primes $\fq \not\in S$: an irreducible (equivalently: connected) component of ${\cX'_{\fq}}^{\sm} \smallsetminus \pi^{-1}(\cD)$ containing an $\bF_{\fq}$-point;
		\item for primes $\fq \in S$: either a component of ${\cX'_{\fq}}^{\sm} \smallsetminus \cD'$ containing an $\bF_{\fq}$-point, or a point in $({\cX'_{\fq}}^{\sm} \cap \cD')(\bF_{\fq})$.
	\end{itemize}
\end{defn}

For a prime~$\fq$, the composition
\[ Y(K) \overset{\red_{\fq}}{\lto} {\cX'_{\fq}}^{\sm}(\bF_{\fq}) \overset{\pi}{\lto} \cX_{\fq}(\bF_{\fq}) \]
of the mod-$\fq$ reduction map for the model~$\cX'$ with the morphism $\pi\colon \cX' \to \cX$ equals the mod-$\fq$ reduction map for~$\cX$, taking values in possibly singular $\bF_{\fq}$-points of~$\cX_{\fq}$. 
It is clear from the definition that the $S$-integral points of~$\cY$ are exactly those $K$-points which have an $S$-integral reduction type for $(\cX,\cD)$ as defined above.
Thus, computing $\cY(\cO_{K,S})$ is equivalent to computing $\cY'(\cO_{K,S})_{\Sigma}$ for all reduction types $\Sigma$ which are $S$-integral for $(\cX,\cD)$.

\section{$p$-adic integrals of logarithmic differentials}
\label{sec:integration}

Let $K$ be a complete subfield of $\bC_p$ (such as~$\bQ_p$ or a finite extension thereof), let $X$ be a smooth projective curve of genus~$g$ over~$K$, let $D \neq \emptyset$ be a finite set of closed points of~$X$ and set $Y = X \smallsetminus D$. Denote by $n \coloneqq D(\Kbar)$ the number of geometric cusps. A \emph{logarithmic differential} on $(X,D)$ is a meromorphic differential on~$X$ which is everywhere regular except for possibly simple poles at the points of~$D$. The $K$-vector space of logarithmic differentials on~$(X,D)$ is $\rH^0(X, \Omega^1(D))$. By the Riemann--Roch theorem, its dimension is given by
\begin{equation}
	\label{eq:dimension-of-log-differentials}
	\dim_K \rH^0(X, \Omega^1(D)) = g + n - 1 .
\end{equation}

\subsection{The abelian integral}
\label{sec:abelian-integral}

We recall the construction of $p$-adic integrals of log differentials. There are different possible approaches; for our purposes it is most convenient to work with the abelian integral constructed by Zarhin \cite{zarhin} and Colmez \cite{colmez}. With this approach it is not necessary to assume that~$X$ has good reduction.

The abelian integral is first constructed on the generalised Jacobian~$J_Y$ using the logarithm map of the $p$-adic Lie group $J_Y(K)$, and then pulled back to~$Y(K)$. The construction by Zarhin \cite{zarhin} requires the choice of a branch of the $p$-adic logarithm $\log\colon K^\times \to K$. This choice is uniquely determined by specifying $\log(p) \in K$. The construction by Colmez \cite{colmez} avoids such a choice by treating $\log(p)$ as a formal symbol, so that the abelian integrals have values in $K_\mathrm{st} \coloneqq K[\log(p)]$. This specialises to Zarhin's definition upon choosing a value for $\log(p) \in K$. For simplicity, we choose to work with the \emph{Iwasawa logarithm}, defined by $\log(p) = 0$. Once this choice is made, the logarithm map of the Lie group $J_Y(K)$, which a priori is only defined on an open neighbourhood of the identity, can be extended to all of $J_Y(K)$, resulting in a homomorphism
\[ \log_{J_Y} \colon J_Y(K) \to \Lie(J_Y). \]
The Lie algebra of $J_Y$ is the dual of the space of invariant differential forms on~$J_Y$. Assume that $Y(K) \neq \emptyset$. Pullback along an Abel--Jacobi map $Y \to J_Y$ defines an isomorphism between invariant differentials on~$J_Y$ and log differentials on $(X,D)$. This isomorphism does not depend on the base point of the Abel--Jacobi map. So the logarithm of~$J_Y$ can be viewed as a map
\begin{equation}
	\label{eq:log-JY}
	\log_{J_Y}\colon J_Y(K) \to \rH^0(X, \Omega^1(D))^\vee.
\end{equation}
This allows us to define, for any degree zero divisor~$F$ on $\Div^0(Y)$, representing an element of~$J_Y(K)$, and for any log differential~$\omega$ on $(X,D)$ the integral
\[ \int_F \omega \coloneqq \log_{J_Y}(F)(\omega) \in K. \]
Pulling this back to~$Y$, we can now define
\[ \int_P^Q \omega \coloneqq \int_{[Q]-[P]} \omega \]
for any two points $P,Q \in Y(K)$ and $\omega \in \rH^0(X, \Omega^1(D))$.

\begin{example}
	\label{ex:logarithm}
	On $Y = \bG_m = \bP^1 \smallsetminus \{0,\infty\}$ we have
	\[ \int_a^b \frac{\rd z}{z} = \log\left(\frac{b}{a}\right) \]
	for all $a,b \in K^\times$, where $\log\colon K^\times \to K$ is the chosen branch of the $p$-adic logarithm. 
\end{example}

\subsection{Tiny integrals}
\label{sec:tiny-integrals}

Assume that $K$ is a finite extension of~$\bQ_p$. Let $R$ be the valuation ring of~$K$ with maximal ideal~$\fm$ and finite residue field~$k$. Let $\cX$ be a regular model of~$X$ over $R$, let $\cD$ be the closure of~$D$ in~$\cX$ and set $\cY = \cX \smallsetminus \cD$. We have the mod-$\fm$ reduction map
\[ \red_{\fm}\colon \cY(R) \to \cY_{\fm}^{\sm}(k) \]
taking values in the smooth $k$-points of the special fibre, see \Cref{def:reduction-map}. The fibres of this map are called \emph{residue discs}. A $p$-adic integral whose endpoints lie in the same residue disc is called a \emph{tiny integral}. They can be computed by formally integrating a power series, as we now explain. 

Let $\tilde P \in \cY_{\fm}^{\sm}(k)$ be a smooth $k$-point on the special fibre of the affine curve. Let~$t$ be a rational function on~$\cX$ which is regular on a neighbourhood of~$\tilde P$ and reduces to a uniformiser of $\tilde P$ on $\cX_{\fm}$. This can always be achieved by rescaling~$t$ by an appropriate power of a uniformiser of~$K$. Let $U(\tilde P) = \red_{\fm}^{-1}(\tilde P)$ be the residue disc of~$\tilde P$. The uniformiser $t$ induces a bijection
\[ t\colon U(\tilde P) \xrightarrow{\sim} \fm. \]
Let $\omega$ be a log differential on~$(X,D)$. 
Then $\omega$ restricts to a holomorphic differential on the residue disc (in a rigid analytic sense), thus after multiplying by some constant in $K^\times$ we have $\omega = f(t) \rd t$ for some power series $f(t) = \sum_n a_n t^n \in \mathcal{O}_K[\![t]\!]$. %
For $P,Q\in U(\tilde{P})$, we can calculate the tiny integral by taking a formal antiderivative $F=\sum_n \tfrac{a_n}{n+1} t^{n+1}\in K[\![t]\!]$ of $f$:
\begin{align}\label{eq:tiny-int-power-series}
	\int_P^Q \omega = F(t(Q))-F(t(P)).
\end{align}

In particular, if $\omega\neq 0$ and if $P$ is fixed, then the function $Q\mapsto \int_P^Q \omega$ has only finitely many zeros inside the disc $U(\tilde{P})$.
Indeed, by \eqref{eq:tiny-int-power-series}, the function $Q\mapsto \int_P^Q \omega$ is given by evaluating the power series $F$ (plus a constant) at elements of $\fm$, and looking at the Newton polygon gives the result.
For a more precise version, see \Cref{lem:newtpolyg}.

\section{Construction of Chabauty functions}
\label{sec:chabauty-functions}

We now prove \Cref{thm:existence-of-chabauty-function-intro} over number fields and describe the affine variant of Restriction of Scalars Chabauty.
Let $K$ be a number field.
Let $X$ be a smooth projective curve over~$K$, let $D \neq \emptyset$ be a finite set of closed points of~$X$ and set $Y = X \smallsetminus D$.
Let $n=\# D(\overline{K})$.
Let $S$ be a finite set of primes of~$\cO_K$ and let $\cX$ be a regular model of~$X$ over~$\cO_K$ that is $D$-transversal over primes in~$S$.
Fix a base point~$P_0 \in Y(K)$ and an $S$-integral reduction type $\Sigma = (\Sigma_{\fq})_{\fq}$ for $(\cX,\cD)$ (see \Cref{def:S-integral-reduction-type}). We are interested in the set $\cY(\cO_{K,S})_{\Sigma}$ consisting of all $S$-integral points of reduction type~$\Sigma$. Recall that $C(\Sigma)$ denotes the set of primes $\fq \in S$ for which $\Sigma_{\fq}$ is cuspidal, and $\Sigma^{\csp} \coloneqq (\Sigma_{\fq})_{\fq \in C(\Sigma)}$ is called the cuspidal part of~$\Sigma$ (\Cref{def:C-Sigma}).

\subsection{Existence of Chabauty functions}
\label{sec:existence-of-chabauty-function}

\begin{thm}
	\label{thm:existence-of-chabauty-function-nf}
	With the notation as above, let~$\fp \not \in S$ be a prime of~$\cO_K$. Assume
	\begin{equation}
	\label{eq:chabauty-condition-over-number-field}
	r + \#C(\Sigma) + ([K : \bQ]-1)n < g + \rank \cO_K^\times + \#|D| + n_2(D) - 1.
	\end{equation}
	Then there exist a non-zero log differential $0 \neq \omega \in \rH^0(X_{K_{\fp}}, \Omega^1(D))$ and a constant $c \in K_{\fp}$, both depending on~$\Sigma$, such that the function $\rho\colon \cY(\cO_{\fp}) \to K_{\fp}$ given by
	\begin{equation}
	\label{eq:rho-function}
	\rho(P) \coloneqq \int_{P_0}^P \omega - c
	\end{equation}
	vanishes on $\cY(\cO_{K,S})_{\Sigma}$. The differential~$\omega$ can be chosen uniformly for all reduction types having the same cuspidal part.
\end{thm}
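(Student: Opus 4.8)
The plan is to execute the ``usual Chabauty logic'' of \Cref{sec:strategy} over $K$, tracking the number field and the single auxiliary prime $\fp$. We may assume that $D$ is not a single $K$-rational point, since otherwise $J_Y = J$ and the statement is classical Chabauty for the Jacobian. By \Cref{thm:map-to-selmer-set} the Abel--Jacobi map carries $\cY(\cO_{K,S})_\Sigma$ into the Selmer set $\Sel(P_0,\Sigma) = \sigma^{-1}(\fS(P_0,\Sigma))$, and by \Cref{thm:rank-of-S-Sigma} and \Cref{thm:rank-of-selmer-set} this set is a translate $t_0 + G_\Sigma$ of the subgroup $G_\Sigma = \sigma^{-1}(U(\Sigma^{\csp}))$ of $J_Y(K)$. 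The latter is finitely generated of rank $\rho_\Sigma = n_1(D) + n_2(D) - \#|D| - \rank\cO_K^\times + r + \#C(\Sigma)$ and depends on $\Sigma$ only through its cuspidal part $\Sigma^{\csp}$.

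Next I would localise at $\fp$ and integrate. Composing the localisation $J_Y(K)\hookrightarrow J_Y(K_\fp)$ with the logarithm $\log_{J_Y}\colon J_Y(K_\fp)\to \rH^0(X_{K_\fp},\Omega^1(D))^\vee$ of \S\ref{sec:abelian-integral} gives a homomorphism $\Lambda := \log_{J_Y}\circ\,\mathrm{loc}_\fp$, and by construction $\int_{P_0}^P\omega = \Lambda(\AJ_{P_0}(P))(\omega)$ for every log differential $\omega$. Since $\Lambda$ is a homomorphism, $\Lambda(\Sel(P_0,\Sigma)) = \Lambda(t_0) + \Lambda(G_\Sigma)$ is an affine translate of the $\bZ$-module $\Lambda(G_\Sigma)$. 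Thus, for any nonzero $\omega$ annihilating $\Lambda(G_\Sigma)$, setting $c := \Lambda(t_0)(\omega) \in K_\fp$ makes $\rho(P) = \int_{P_0}^P\omega - c$ vanish on $\cY(\cO_{K,S})_\Sigma$. Because $G_\Sigma$ depends only on $\Sigma^{\csp}$, one and the same $\omega$ annihilates $\Lambda(G_\Sigma)$ for all reduction types with the given cuspidal part, with only the constant $c$ varying; this yields the final uniformity assertion for free.

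It remains to produce a nonzero $\omega \in \rH^0(X_{K_\fp},\Omega^1(D))$ annihilating $\Lambda(G_\Sigma)$. Viewing $\omega$ as a $K_\fp$-linear functional on $\rH^0(X_{K_\fp},\Omega^1(D))^\vee$, such $\omega$ exists as soon as the $K_\fp$-span of $\Lambda(G_\Sigma)$ is a proper subspace. Since $G_\Sigma$ is generated by $\rho_\Sigma$ elements and $\log_{J_Y}$ is a homomorphism, this span has $K_\fp$-dimension at most $\rho_\Sigma$, so it suffices that $\rho_\Sigma < \dim_{K_\fp}\rH^0(X_{K_\fp},\Omega^1(D)) = g+n-1$. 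Using $n = n_1(D)+2n_2(D)$ and the rank formula of \Cref{thm:ker-sigma}, this inequality is exactly $r + \#C(\Sigma) < g + \rank\cO_K^\times + \#|D| + n_2(D) - 1$, which is implied by the hypothesis \eqref{eq:chabauty-condition-over-number-field} because $([K:\bQ]-1)n \geq 0$. Hence the required $\omega$ exists.

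The main obstacle is the dimension count of the previous paragraph and, in particular, reconciling it with the stated hypothesis: the bare existence of $\omega$ needs only $\rho_\Sigma < g+n-1$, and the extra slack $([K:\bQ]-1)n$ in \eqref{eq:chabauty-condition-over-number-field} is the price one pays when, instead of bounding the $K_\fp$-span of $\Lambda(G_\Sigma)$ by its $\bZ$-rank directly, one argues through Restriction of Scalars — the $S$-integral analogue of Siksek's method — where the toric directions, defined over the residue extensions $k(Q)$ rather than over $K$, inflate the effective obstruction dimension by $([K:\bQ]-1)n$. I expect the genuinely delicate point to be this bookkeeping over the number field, namely the interplay between the $K$-rational rank of $G_\Sigma$, the $K_\fp$-linear nature of log differentials, and the spreading of the cusps at $\fp$; the containment in the Selmer set, the homomorphism property of $\Lambda$, the choice of $c$, and the uniformity in $\Sigma^{\csp}$ are all formal once the existence of $\omega$ is secured.
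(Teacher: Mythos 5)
Your overall architecture is exactly the paper's: the containment $\AJ_{P_0}(\cY(\cO_{K,S})_{\Sigma}) \subseteq \Sel(P_0,\Sigma)$, the fact that $\Sel(P_0,\Sigma)$ is empty or a translate of $W = \sigma^{-1}(U(\Sigma^{\csp}))$ of rank $\rho_{\Sigma} = n_1(D)+n_2(D)-\#|D|-\rank\cO_K^\times+r+\#C(\Sigma)$, the usual Chabauty step requiring $\rho_{\Sigma} < \dim_{K_{\fp}}\rH^0(X_{K_{\fp}},\Omega^1(D)) = g+n-1$, and uniformity of $\omega$ in $\Sigma^{\csp}$ because $W$ depends only on the cuspidal part. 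But the one genuinely number-field-specific step — the bookkeeping — contains a real error. Over a general number field $K$ the correct identity is $n_1(D)+2n_2(D) = [K:\bQ]\,n$, not $n_1(D)+2n_2(D)=n$: indeed $n_1(Q)+2n_2(Q) = [k(Q):\bQ] = [k(Q):K]\,[K:\bQ]$, and summing over $Q \in |D|$ gives $[K:\bQ]\,n$ since $n = \#D(\Kbar) = \sum_Q [k(Q):K]$. The identity you used holds only for $K=\bQ$. With the correct identity, the rank condition $\rho_{\Sigma} < g+n-1$ unwinds to \emph{exactly} \eqref{eq:chabauty-condition-over-number-field}, including the term $([K:\bQ]-1)n$; this is precisely how the paper concludes. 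Your computation instead claims $\rho_{\Sigma} < g+n-1$ is equivalent to the weaker inequality $r+\#C(\Sigma) < g+\rank\cO_K^\times+\#|D|+n_2(D)-1$; were that true, the theorem would hold under this weaker hypothesis, which is false. (Test case: $K$ imaginary quadratic, $D$ two $K$-rational cusps, $S=\emptyset$; then $\rho_{\Sigma} = r$ and the genuine rank condition is $r < g+1$, matching \eqref{eq:chabauty-condition-over-number-field}, while your inequality demands only $r < g+3$.)

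Consequently your closing paragraph misdiagnoses the situation: there is no ``slack'' $([K:\bQ]-1)n$ in \eqref{eq:chabauty-condition-over-number-field}, and the term has nothing to do with Restriction of Scalars, which is a separate statement (\Cref{thm:ros-chabauty}) with a genuinely different inequality \eqref{eq:ros-chabauty-condition}. The term arises purely from the archimedean counting above: the toric contribution to $\rank\ker(\sigma)$ in \Cref{thm:ker-sigma} is governed by the unit ranks $n_1(Q)+n_2(Q)-1$ of the residue fields $k(Q)$, which are counts of embeddings over $\bQ$, whereas $n$ is a count of points over $\Kbar$. Since the hypothesis you were given happens to be exactly equivalent to the rank inequality you need, your final conclusion is not wrong — but your verification of it is, and as written your argument would equally ``prove'' a false strengthening of the theorem. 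One harmless loose end: you should also note, as the paper does, that $\Sel(P_0,\Sigma)$ may be empty, in which case the statement holds vacuously for any choice of $\omega$ and $c$.
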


Hypothesis \eqref{eq:chabauty-condition-over-number-field} specialises to \eqref{eq:chabauty-condition-intro} for $K=\bQ$, so \Cref{thm:existence-of-chabauty-function-intro} is a special case of \Cref{thm:existence-of-chabauty-function-nf}.
Note that the function $\rho\colon \cY(\cO_{\fp}) \to K_{\fp}$ in~\eqref{eq:rho-function} has only finitely many zeros by \S\ref{sec:tiny-integrals}.

\begin{proof}
	If $D$ is just a single $K$-rational point, Condition~\eqref{eq:chabauty-condition-over-number-field} becomes $r + \#C(\Sigma) < g$, and already the weaker condition $r < g$ is sufficient for the classical Chabauty method to apply, yielding a non-zero holomorphic differential $0 \neq \omega \in \rH^0(X_{K_{\fp}}, \Omega^1)$ such that $\int_{P_0}^P \omega = 0$ for all rational points $P \in X(K)$. Hence, from now on assume~$n > 1$. 
	In \Cref{sec:selmer-sets} we constructed the Selmer set $\Sel(P_0,\Sigma) \subseteq J_Y(K)$ with the property that the Abel--Jacobi map $\AJ_{P_0}\colon \cY(\cO_{K,S}) \to J_Y(K)$ maps all points of reduction type~$\Sigma$ into $\Sel(P_0,\Sigma)$ (see \Cref{thm:map-to-selmer-set}). 
	The set $\Sel(P_0,\Sigma)$ is the preimage under $\sigma\colon J_Y(K) \to V_D$ of a translate of a certain subgroup $U = U(\Sigma^{\csp}) \subseteq V_D$.
	Let $W \coloneqq \sigma^{-1}(U) \subseteq J_Y(K)$, so that $\Sel(P_0,\Sigma)$ is either empty or a translate $s_0 + W$ of~$W$.
	Consider the following diagram:
	\begin{equation}
		\label{eq:chabauty-diagram}
		\begin{tikzcd}
			&[-15pt] \cY(\cO_{K,S})_{\Sigma} \dar["\AJ_{P_0}"] \rar[hook] & \cY(\cO_{K,S}) \dar["\AJ_{P_0}"] \rar[hook] & \cY(\cO_{\fp}) \dar["\AJ_{P_0}"] \drar["\int_{P_0}"]& \\
			s_0 + W \rar[equal] & \Sel(P_0, \Sigma) \rar[hook] & J_Y(K) \rar[hook] & J_Y(K_{\fp}) \rar["\log_{J_Y}"] & \rH^0(X_{K_{\fp}}, \Omega^1(D))^\vee
		\end{tikzcd}
	\end{equation}
	When $\rank_{\bZ} W$ is strictly less than $\dim_{K_{\fp}} \rH^0(X_{K_{\fp}}, \Omega^1(D))$, then the image of~$W$ in $\rH^0(X_{K_{\fp}}, \Omega^1(D))^\vee$ under the logarithm map is contained in a proper $K_{\fp}$-linear subspace, so there exists a non-zero $0 \neq \omega \in \rH^0(X_{K_{\fp}}, \Omega^1(D))$ with $\langle \log_{J_Y}(w), \omega \rangle = 0$ for all $w \in W$. Then, since $\Sel(P_0,\Sigma)$ is a translate of~$W$ (or empty), the map $s \mapsto \langle \log_{J_Y}(s), \omega \rangle$ is constant on $\Sel(P_0,\Sigma)$. Pulling this back under the diagonal map in the diagram~\eqref{eq:chabauty-diagram} shows that the map $P \mapsto \int_{P_0}^P \omega$ is constant on $\cY(\cO_{K,S})_{\Sigma}$.
	By \Cref{thm:rank-of-selmer-set}, the condition that the rank of~$W$ be strictly less than the dimension of $\rH^0(X_{K_{\fp}}, \Omega^1(D))$ reads
	\[ n_1(D) + n_2(D) - \#|D| - \rank \cO_K^\times + r + \#C(\Sigma) < g + n -1, \]
	which, using $n_1(D) + 2n_2(D) = [K:\bQ] n$, simplifies to \eqref{eq:chabauty-condition-over-number-field}.
\end{proof}

Of course, when the base point~$P_0$ is itself an $S$-integral point of reduction type~$\Sigma$, then the constant~$c$ in~\eqref{eq:rho-function} is necessarily zero.

\begin{rem}\label{rem:more-red-types-than-eta-c}
	The proof above shows that the differential $\omega$, the constant $c$ and hence the function $\rho$ in \eqref{eq:rho-function} depend on~$\Sigma$ only through $\Sel(P_0,\Sigma)$.
	That is, if $\Sel(P_0,\Sigma)=\Sel(P_0,\Sigma')$ for two reduction types $\Sigma$ and $\Sigma'$, then we get a single Chabauty function $\rho$ that vanishes on $\cY(\cO_{K,S})_\Sigma \cup \cY(\cO_{K,S})_{\Sigma'}$.
	The equality $\Sel(P_0,\Sigma)=\Sel(P_0,\Sigma')$ is guaranteed if  $\fS_{\fq}(P_0,\Sigma)=\fS_{\fq}(P_0,\Sigma')$ for all $\fq$.
\end{rem}

\subsection{Affine Restriction of Scalars Chabauty}
\label{sec:number-fields}

Instead of locating $\cY(\cO_{K,S})$ inside $\cY(\cO_{\fp})$ for some suitable prime~$\fp$ of $\cO_K$ as in \Cref{sec:existence-of-chabauty-function}, it can be advantageous to consider all primes~$\fp$ of~$\cO_K$ lying over a fixed rational prime~$p$ at once and try to locate $\cY(\cO_{K,S})$ inside $\prod_{\fp|p} \cY(\cO_{\fp})$. This method is known as Restriction of Scalars Chabauty. It appeared first in \cite{siksek:number-field-chabauty} in the context of rational points on projective curves.

The relevant diagram for Restrictions of Scalars Chabauty in our setting looks as follows:
\begin{equation}
\label{eq:ros-chabauty-diagram}
\begin{tikzcd}[row sep=large]
\cY(\cO_{K,S})_{\Sigma} \dar["\AJ_{P_0}"] \rar[hook] & \cY(\cO_{K,S}) \dar["\AJ_{P_0}"] \rar[hook] & \prod_{\fp|p} \cY(\cO_{\fp}) \dar["\prod_{\fp|p} \AJ_{P_0}"] \drar["\prod_{\fp|p}\int_{P_0}"]& \\
\Sel(P_0, \Sigma) \rar[hook] & J_Y(K) \rar[hook] & \prod_{\fp|p} J_Y(K_{\fp}) \rar["\log_{J_Y}"] & \prod_{\fp|p} \rH^0(X_{K_{\fp}}, \Omega^1(D))^\vee
\end{tikzcd}
\end{equation}
Here, $p$ is a rational prime not divisible by any prime in~$S$.
Let $d \coloneqq [K : \bQ]$.
Since $\prod_{\fp|p} \cY(\cO_{\fp})$ is now $d$-dimensional as a rigid analytic space over~$\bQ_p$, at least $d$ independent functions are needed to cut out a finite subset. The following theorem shows under which condition we find the right number of functions.

\begin{thm}
	\label{thm:ros-chabauty}
	Let $p$ be a rational prime as above. Assume that the inequality
	\begin{equation}
	\label{eq:ros-chabauty-condition}
	r + \#C(\Sigma) \leq d(g-2) + n_2(D) + \#|D| + \rank \cO_K^\times
	\end{equation}
	is satisfied. Then there exist $d$ linearly independent (over~$\bQ_p$) tuples of logarithmic differentials $(\omega_{\fp}^{(k)})_{\fp} \in \prod_{\fp|p} \rH^0(X_{K_{\fp}}, \Omega^1(D))$ and $p$-adic constants $c_k \in \bQ_p$ ($k=1,\ldots,d$) such that the functions $\rho_k\colon \prod_{\fp|p} \cY(\cO_{\fp}) \to \bQ_p$ given by
	\begin{equation}
	\label{eq:ros-chabauty-function}
	\rho_k((P_{\fp})_{\fp|p}) = \sum_{\fp|p} \Tr_{K_{\fp}/\bQ_p} \left(\int_{P_0}^{P_{\fp}} \omega^{(k)}_{\fp}\right) - c_k
	\end{equation}
	vanish on the image of the diagonal embedding of $\cY(\cO_{K,S})_{\Sigma}$ in $\prod_{\fp|p} \cY(\cO_{\fp})$.
\end{thm}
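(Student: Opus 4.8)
The plan is to run the argument of \Cref{thm:existence-of-chabauty-function-nf} along the bottom row of \eqref{eq:ros-chabauty-diagram}, but to perform the final linear algebra over $\bQ_p$ rather than over the individual completions $K_\fp$; the passage from $K_\fp$-linear to $\bQ_p$-linear functionals via the trace form is exactly what produces $d$ functions out of the single ``dimension of room'' available in the one-prime case. As there, I may assume that $D$ is not a single $K$-rational point: otherwise $J_Y = J$, the map $\sigma$ is trivial, and the statement reduces to ordinary Restriction of Scalars Chabauty for the Jacobian. Composing the diagonal homomorphism $J_Y(K) \hookrightarrow \prod_{\fp\mid p} J_Y(K_\fp)$ with $\log_{J_Y}$ yields a group homomorphism $\lambda\colon J_Y(K) \to V$, where
\[ V \coloneqq \prod_{\fp\mid p} \rH^0(X_{K_\fp},\Omega^1(D))^\vee \]
is the $\bQ_p$-vector space in the lower right corner of \eqref{eq:ros-chabauty-diagram}. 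By \Cref{thm:map-to-selmer-set} the Abel--Jacobi map sends $\cY(\cO_{K,S})_\Sigma$ into $\Sel(P_0,\Sigma)$, which by \Cref{thm:rank-of-selmer-set} is empty or a translate $s_0 + W$ of the subgroup $W = \sigma^{-1}(U)$ of rank $n_1(D)+n_2(D)-\#|D|-\rank\cO_K^\times+r+\#C(\Sigma)$.

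Next I would set up the dimension count. By \eqref{eq:dimension-of-log-differentials} each factor $\rH^0(X_{K_\fp},\Omega^1(D))$ has $K_\fp$-dimension $g+n-1$, so $\dim_{\bQ_p} V = \sum_{\fp\mid p}[K_\fp:\bQ_p](g+n-1) = d(g+n-1)$. Since $\lambda$ is a homomorphism and $\log_{J_Y}$ kills torsion, the image $\lambda(W)$ spans a $\bQ_p$-subspace $\overline{W} \subseteq V$ with $\dim_{\bQ_p}\overline{W} \le \rank_\bZ W$. Using the identity $n_1(D)+2n_2(D)=dn$ exactly as in the proof of \Cref{thm:existence-of-chabauty-function-nf}, the hypothesis \eqref{eq:ros-chabauty-condition} is equivalent to
\[ \rank_\bZ W \le d(g+n-2) = \dim_{\bQ_p} V - d, \]
so that $\dim_{\bQ_p}\overline{W} \le \dim_{\bQ_p} V - d$.

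The heart of the argument is then to realise $\bQ_p$-functionals on $V$ as tuples of log differentials. The $\bQ_p$-bilinear trace pairing
\[ V \times \prod_{\fp\mid p}\rH^0(X_{K_\fp},\Omega^1(D)) \to \bQ_p, \qquad \bigl((v_\fp)_\fp,(\omega_\fp)_\fp\bigr) \mapsto \sum_{\fp\mid p}\Tr_{K_\fp/\bQ_p}\bigl(v_\fp(\omega_\fp)\bigr), \]
is non-degenerate, because each $\Tr_{K_\fp/\bQ_p}$ induces a non-degenerate form on $K_\fp$, and hence identifies $\prod_{\fp\mid p}\rH^0(X_{K_\fp},\Omega^1(D))$ with the $\bQ_p$-dual $V^\vee$. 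The orthogonal complement $\overline{W}^{\perp}$ therefore satisfies $\dim_{\bQ_p}\overline{W}^{\perp} = \dim_{\bQ_p} V - \dim_{\bQ_p}\overline{W} \ge d$, and I would pick $d$ linearly independent tuples $(\omega_\fp^{(k)})_\fp \in \overline{W}^{\perp}$ for $k=1,\dots,d$; these are the required tuples of log differentials. Each defines a $\bQ_p$-linear functional $\ell_k$ on $V$ vanishing on $\overline{W} \supseteq \lambda(W)$, so $\ell_k\circ\lambda$ is constant on $s_0+W=\Sel(P_0,\Sigma)$; setting $c_k$ equal to this constant value and unravelling the definition of the abelian integral from \Cref{sec:abelian-integral} gives $\rho_k\bigl((P)_{\fp\mid p}\bigr) = \ell_k(\lambda(\AJ_{P_0}(P))) - c_k = 0$ for every $P \in \cY(\cO_{K,S})_\Sigma$, as desired.

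The main obstacle I anticipate is the correct handling of the restriction of scalars in the linear algebra: one must measure $\overline{W}$ as a $\bQ_p$-subspace and cut it out by $\bQ_p$-linear functionals, and it is precisely the non-degeneracy of the trace form --- rather than $K_\fp$-linearity --- that upgrades the single functional of the one-prime case to $d$ of them. The accompanying points to get right are the verification that \eqref{eq:ros-chabauty-condition} is equivalent to the codimension bound $\dim_{\bQ_p}\overline{W} \le \dim_{\bQ_p} V - d$ (a bookkeeping exercise relying on $n_1(D)+2n_2(D)=dn$ and the rank formula of \Cref{thm:rank-of-selmer-set}) and the compatibility, under the trace pairing, between the chosen tuples $(\omega_\fp^{(k)})_\fp$ and the trace-of-integrals form of $\rho_k$ in \eqref{eq:ros-chabauty-function}.
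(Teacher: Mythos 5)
Your proposal is correct and follows essentially the same route as the paper's proof: the same special-casing of a single $K$-rational cusp, the same rank/codimension bookkeeping via \Cref{thm:rank-of-selmer-set} and the identity $n_1(D)+2n_2(D)=dn$, and the same duality step, since your non-degenerate trace pairing is just the paper's canonical isomorphism $\Res_{K_{\fp}/\bQ_p}(H^\vee)\cong\Res_{K_{\fp}/\bQ_p}(H)^\vee$, $\phi\mapsto\Tr_{K_{\fp}/\bQ_p}\circ\phi$, phrased as a bilinear form. There are no gaps; the only cosmetic omission is the trivial remark that when $\Sel(P_0,\Sigma)=\emptyset$ the set $\cY(\cO_{K,S})_{\Sigma}$ is empty and any constants $c_k$ work, which the paper glosses over in the same way.
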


\begin{proof}
	If $D$ consists just of a single $K$-rational point, Condition~\eqref{eq:ros-chabauty-condition} is equivalent to $r + \#C(\Sigma) \leq d(g-1)$, and already the weaker condition $r \leq d(g-1)$ is enough to employ the classical Restriction of Scalars Chabauty method for the $K$-rational points on~$X$. So we can assume that $n > 1$. Consider the diagram~\eqref{eq:ros-chabauty-diagram}. The Selmer set $\Sel(P_0,\Sigma)$ constructed in \Cref{sec:selmer-sets} is the preimage under $\sigma\colon J_Y(K) \to V_D$ of a translate of a certain subgroup $U = U(\Sigma^{\csp}) \subseteq V_D$. Let $W \coloneqq \sigma^{-1}(U) \subseteq J_Y(K)$, so that $\Sel(P_0,\Sigma)$ is either empty or a translate of~$W$. Let $d'$ be the codimension of the image of~$W \otimes_{\bZ} \bQ_p$ in the $\bQ_p$-vector space $\prod_{\fp|p} \rH^0(X_{K_{\fp}}, \Omega^1(D))^\vee$ along the bottom row of Diagram~\eqref{eq:ros-chabauty-diagram}. Then there exist $d'$ independent $\bQ_p$-linear maps 
	\begin{equation}
	\label{eq:fk-maps}
	f_k\colon \prod_{\fp|p} \rH^0(X_{K_{\fp}}, \Omega^1(D))^\vee \to \bQ_p \qquad (k=1,\ldots,d')
	\end{equation}
	vanishing on the image of~$W$.
	For a finite-dimensional $K_{\fp}$-vector space~$H$ one has a canonical isomorphism of $\bQ_p$-vector spaces
	\[ \Res_{K_{\fp}/\bQ_p}(H^\vee) \cong \Res_{K_{\fp}/\bQ_p}(H)^\vee \]
	via $\phi \mapsto \Tr_{K_{\fp}/\bQ_p} \circ \phi$. Here, $\Res_{K_{\fp}/\bQ_p}$ means we view a $K_{\fp}$-vector space as a $\bQ_p$-vector space via restriction of scalars, and $\Tr_{K_{\fp}/\bQ_p}$ denotes the trace map $K_{\fp} \to \bQ_p$. Applying this with $H = \rH^0(X_{K_{\fp}}, \Omega^1(D))$ and taking duals, one sees that each of the linear maps $f_k$ in \eqref{eq:fk-maps} is given by 
	\[ f_k\colon (\phi_{\fp})_{\fp} \mapsto \sum_{\fp|p} (\Tr_{K_{\fp}/\bQ_p}\circ \phi_{\fp})(\omega_{\fp}^{(k)}) \]
	for a unique tuple of log differentials $(\omega_{\fp}^{(k)})_{\fp} \in \prod_{\fp|p} \rH^0(X_{K_{\fp}}, \Omega^1(D))$. Since $\Sel(P_0,\Sigma)$ is a translate of $W$ (or empty), each map $f_k$ is constant on the Selmer set, say with value~$c_k \in \bQ_p$. Pulling $f_k$ back along the diagonal arrow in Diagram~\eqref{eq:ros-chabauty-diagram} results in the map
	\[ \prod_{\fp|p} \cY(\cO_{\fp}) \to \bQ_p, \qquad (P_{\fp})_{\fp|p} \mapsto \sum_{\fp|p} \Tr_{K_{\fp}/\bQ_p} \left(\int_{P_0}^{P_{\fp}} \omega^{(k)}_{\fp}\right) \]
	whose restriction to $\cY(\cO_{K,S})$ is constant with value $c_k$ by commutativity of the diagram. Hence, we find $d'$ many functions $\rho_k$ of the claimed form. It remains to see that $d' \geq d$. %
	By definition, $d'$ can be bounded below by
	\begin{align*}
	d (g + n - 1) - \rank W = d(g-1) -r + n_2(D) + \#|D| + \rank \cO_K^\times - \#C(\Sigma),
	\end{align*}
	where we used $n_1(D) + 2n_2(D) = dn$ and \Cref{thm:rank-of-selmer-set}.
	Now \eqref{eq:ros-chabauty-condition} ensures that $d' \geq d$.
\end{proof}

\begin{rem}
	\label{rem:ros-choice-of-prime}
	When applying \Cref{thm:ros-chabauty} it may be convenient to choose a prime~$p$ that is totally split in~$K$, so that $K_{\fp} = \bQ_p$ for all $\fp|p$. Then the integrals occurring in the Chabauty functions \eqref{eq:ros-chabauty-function} are all defined over~$\bQ_p$ and the trace map can be omitted. Most implementations of Coleman integration in computer algebra systems are currently restricted to~$\bQ_p$.
\end{rem}

\begin{rem}
	\label{rem:special-case-linear-quadratic-chabauty-over-nf}
	In the case that $\cY\colon y^2 = f(x)$ with $f \in \cO_K[x]$ monic, squarefree and of degree $2g+2$, condition~\eqref{eq:ros-chabauty-condition} with $S = \emptyset$ specialises to $\rank J(K) \leq dg - \rank \cO_K^\times$. In this setting, functions $\prod_{\fp|p} \cY(\cO_{\fp}) \to \bQ_p$ vanishing on $\cY(\cO_K)$ are constructed in terms of $p$-adic heights in \cite[Theorem~5.1]{GM:LinearQuadraticChabauty}.
\end{rem}

\begin{rem}
	\label{rem:unlikely-intersections}
	Since $\prod_{\fp|p} \cY(\cO_{\fp})$ is a $d$-dimensional rigid analytic space, at least~$d$ linearly independent Chabauty functions are needed to cut out a finite subset. However, having~$d$ functions does not guarantee finiteness since the functions could conspire to have many zeros in common. The phenomenon of such unlikely intersections was first observed by Siksek \cite[§2]{siksek:number-field-chabauty} and studied in detail by Dogra \cite{dogra:unlikely-intersections}. For affine curves of genus zero, limitations to the Restrictions of Scalars Chabauty--Skolem method (see \Cref{rem:circle}) were studied in \cite{triantafillou:ROS-Chabauty}.
\end{rem}

\section{Bounds on the number of $S$-integral points}
\label{sec:bounds}

Suppose we are in the setting of \Cref{sec:chabauty-functions} with $K=\bQ$, so we have a curve $Y = X \smallsetminus D$ over $\bQ$, a finite set of primes~$S$, a regular model $\cY = \cX \smallsetminus \cD$ over~$\bZ$ that is $D$-transversal over primes in~$S$, an auxiliary prime $p \not\in S$ and a base point $P_0 \in Y(\bQ)$. If \eqref{eq:uniform-chabauty-condition-intro} holds, then \Cref{thm:existence-of-chabauty-function-intro} applies for every $S$-integral reduction type $\Sigma$. 
Consequently, the set of $S$-integral points $\cY(\bZ_S)$ is contained in a finite union of zero loci of functions $\rho\colon \cY(\bZ_p) \to \bQ_p$ of the form \eqref{eq:rho-function-intro}. %
We give a bound on the number of zeros of such a function, extending the results of Coleman as outlined in \cite[\S 5]{mcpoonen} for integrals of holomorphic differentials to those of logarithmic differentials. %
Following \cite[Appendix A]{mcpoonen}, we allow $p$ to be a prime of bad reduction.

\subsection{Reduction of log differentials}
We start by discussing the reduction of log differentials modulo~$p$. This requires some care when~$p$ is a prime of bad reduction since the sheaf of Kähler differentials on $\cX_{\bZ_p}$ fails to be a line bundle in this case and one has to pass through the canonical sheaf instead. We work in a general local setting. 

As in \S\ref{sec:intersection-numbers}, let $(R,\fm,k)$ be a discrete valuation ring with uniformizer $\pi$ and fraction field $K$.
Let $X/K$ be a smooth projective curve and let $\cX/R$ be a regular model of $X$ with special fibre $\cX_{\fm}$.
Let $\emptyset\neq D\subseteq X$ be a finite set of closed points, $n=\# D(\overline{K})$, and $\cD$ the Zariski closure of $D$ in $\cX$.
Let $Y = X \smallsetminus D$ and $\cY = \cX \smallsetminus \cD$.
Let $0\neq\omega\in \rH^0(X, \Omega^1(D))$ be a nonzero log differential. The canonical sheaf $\omega_{\cX/R}$ is a line bundle on the model~$\cX$ whose restriction to the generic fibre agrees with the sheaf $\Omega^1_{X/K}$ of Kähler differentials, so we can view $\omega$ as a meromorphic section of $\omega_{\cX/R}$ and let $\cK=\div_{\cX}(\omega)$ be the associated canonical divisor.
Write $\cK = \cH + \cV$ for a horizontal divisor $\cH$ and a vertical divisor $\cV$.
Note that, if $\div(\omega)=\sum_{P\in |X|} a_P P$, then $\cH = \sum_{P\in |X|} a_P \cP$.
In particular, since $\omega$ is a logarithmic differential, we have $a_P \geq 0$ for $P \in |Y|$ and $a_P \geq -1$ for $P\in D$, so $\cH = \cH_e - \cH_p$ with $\cH_e$ effective and $\cH_p = \sum_{P\in D'} \cP$ with $D'\subseteq D$ consisting of the poles of $\omega$.

Let $C$ be a component of $\cX_{\fm}$ that occurs with multiplicity~$1$ in $\div_{\cX}(\pi)=\cX_{\fm}$.
Then we can multiply $\omega$ with an appropriate power of $\pi$ to obtain a log differential $\omega^C$ such that $C$ does not occur in $\cK^C \coloneqq \div_{\cX}(\omega^C)$.
We again write $\cK^C = \cH + \cV^C$ with $\cH$ the same horizontal divisor as before and $\cV^C$ vertical.
We reduce $\omega^C$ to $C^{\sm} = C \cap \cX^{\sm}$ as follows:
By removing all components and singular points of $\cX_{\fm}$ except $C^{\sm}$, we obtain an open subscheme $\cX^C \subseteq \cX$.
We may view $\omega^C$ as a (meromorphic) differential form on $\cX^C$.
As $C$ does not occur in $\cK^C$, the restriction $\widetilde{\omega}^C \coloneqq \omega^C|_{C^{\sm}}$ is a nonzero (meromorphic) differential form on $C^{\sm}$.
Note that the restriction of $\widetilde{\omega}^C$ to $C^{\sm} \cap \cY$ is holomorphic.

In conclusion, for every nonzero log differential $0 \neq \omega \in \rH^0(X, \Omega^1(D))$ and for every multiplicity-1 component $C$ of~$\cX_{\fm}$, there is a scalar multiple~$\omega^C$ of~$\omega$ which reduces to a nonzero meromorphic differential $\widetilde{\omega}^C$ on~$C^{\sm}$ whose restriction to $C^{\sm} \cap \cY$ is holomorphic.

\subsection{Bounding zeros of a log differential modulo~$p$}
Keep the notation from the previous subsection. We need two lemmas on the number of zeros of the reduction of a log differential. Given a multiplicity-1 component $C$ of $\cX_{\fm}$ and given~$\omega$ with reduction $\widetilde{\omega}^C$ on~$C^{\sm}$, let 
\[ n_C \coloneqq \sum_{\widetilde{R} \in (C^{\sm} \cap \cY)(k)} \ord_{\widetilde{R}}(\widetilde{\omega}^C), \]
so $n_C$ is the number of zeros of $\widetilde{\omega}^C$ in $(C^{\sm} \cap \cY)(k)$ counted with multiplicities.

\begin{lemma}\label{lem:A3}
	We have $n_C \leq \cH_e \cdot C$.
\end{lemma}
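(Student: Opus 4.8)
The plan is to compute the order of vanishing of the reduced differential $\widetilde\omega^C$ at each point of $(C^\sm\cap\cY)(k)$ as a local intersection number with the component $C$, and then to recognise the resulting quantity as a partial, degree-weighted sum of the intersection cycle $\cH_e.C$. The rescaling of $\omega$ to $\omega^C$ (so that $C$ does not appear in $\cK^C = \div_{\cX}(\omega^C)$) is precisely what allows one to intersect $\cK^C$ with $C$ and to read off $\ord(\widetilde\omega^C)$ from it.

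The first step is the local comparison: for a closed point $x\in C^\sm$, at which $C$ is not a component of $\cK^C$, I claim $\ord_x(\widetilde\omega^C)=i_x(\cK^C,C)$. Since $C$ occurs with multiplicity~$1$ in $\cX_\fm$ and $x\in C^\sm$, the morphism $\cX\to\Spec R$ is smooth at $x$, the uniformiser $\pi$ is a local equation for $C$ near $x$, and $\omega_{\cX/R}$ is locally generated by $\rd t$ for a local coordinate $t$ whose restriction is a uniformiser of $C$ at $x$. Writing $\omega^C = f\,\rd t$ locally gives $\cK^C = \div_{\cX}(f)$ near $x$, and restricting to $C=\{\pi=0\}$ gives $\widetilde\omega^C=\bar f\,\rd\bar t$ with $\rd\bar t$ a generator of $\Omega^1_{C^\sm/k}$ near $x$; hence $\ord_x(\widetilde\omega^C)=\ord_x(f|_C)=i_x(\cK^C,C)$, the last equality by the definition of the intersection number together with \cite[Lemma~III.2.2]{lang:arakelov}.

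Next I would use the decomposition $\cK^C=\cH_e-\cH_p+\cV^C$ to simplify this at points $\widetilde R\in(C^\sm\cap\cY)(k)$. Because $\widetilde R$ lies in $\cY$ it avoids the support $\cD$ of the pole divisor $\cH_p$, so $i_{\widetilde R}(\cH_p,C)=0$; and because $\widetilde R$ is a smooth point of $\cX_\fm$ it lies on the single component $C$, which is not contained in the vertical divisor $\cV^C$, so $i_{\widetilde R}(\cV^C,C)=0$. Thus $\ord_{\widetilde R}(\widetilde\omega^C)=i_{\widetilde R}(\cH_e,C)$ for every such $\widetilde R$ (these orders are $\geq 0$, consistent with the holomorphy of $\widetilde\omega^C$ on $C^\sm\cap\cY$), and summing yields $n_C=\sum_{\widetilde R\in(C^\sm\cap\cY)(k)} i_{\widetilde R}(\cH_e,C)$. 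Finally, since these points are $k$-rational and $\cH_e$ is effective with no common component with $C$ (so every $i_x(\cH_e,C)\geq 0$), this partial sum is bounded above by the full degree-weighted intersection number $\cH_e\cdot C=\sum_x i_x(\cH_e,C)[k(x):k]$, giving $n_C\leq \cH_e\cdot C$.

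The main obstacle is the local comparison $\ord_x(\widetilde\omega^C)=i_x(\cK^C,C)$: one must correctly identify the restriction of the meromorphic section $\omega^C$ of the relative dualizing sheaf $\omega_{\cX/R}$ to the multiplicity-one component $C^\sm$ with a meromorphic section of $\Omega^1_{C^\sm/k}$ whose order of vanishing equals the transversal intersection multiplicity. This is exactly where the multiplicity-one hypothesis is indispensable — it forces $\cX\to\Spec R$ to be smooth along $C^\sm$, so that $\pi$ cuts out $C$ and $\omega_{\cX/R}$ restricts to the canonical sheaf of $C^\sm$ — and it explains why one first passes from $\omega$ to the rescaled $\omega^C$.
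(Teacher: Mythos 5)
Your proof is correct and follows essentially the same route as the paper: decompose $\cK^C = \cH_e - \cH_p + \cV^C$, observe that at points of $(C^{\sm}\cap\cY)(k)$ the contributions of $\cH_p$ (poles lie on $\cD$) and $\cV^C$ (a smooth point of $\cX_{\fm}$ lies only on $C$, which does not occur in $\cV^C$) vanish, and then bound the sum of the local numbers $i_{\widetilde R}(\cH_e,C)$ at $k$-rational points by the full degree-weighted intersection number $\cH_e\cdot C$ using effectivity of $\cH_e$. The only difference is that you spell out the local identity $\ord_x(\widetilde\omega^C)=i_x(\cK^C,C)$ via smoothness of $\cX\to\Spec R$ along $C^{\sm}$, a step the paper uses implicitly; your justification of it is sound.
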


\begin{proof}
	For $P \in (C^{\sm} \cap \cY)(k)$ we have
	\[
		i_P(\cK^C,C) = i_P(\cH,C) = i_P(\cH_e,C) - i_P(\cH_p,C) = i_P(\cH_e,C),
	\]
	where the first equality is true because $P$ does not lie on any component of $\cV^C$ as $C$ does not occur in $\cV^C$, and the last equality is true because $P\in \cY(k)$ does not lie in the horizontal closure of a pole.
	As $\cH_e$ is effective, summing over all $P$ yields
	\[
		n_C = \sum_{P\in (C^{\sm} \cap \cY)(k)} i_P(\cK^C,C) \leq \sum_{P\in C} i_P(\cH_e,C) [k(P):k] = \cH_e \cdot C. \qedhere
	\]
\end{proof}

\begin{lemma}\label{lem:A4}
	With the above notation, we have
	\[
		\sum_{C \textnormal{ of multiplicity } 1} n_C \leq 2g-2+n.
	\]
\end{lemma}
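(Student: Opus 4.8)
The plan is to sum the estimate of \Cref{lem:A3} over all multiplicity-$1$ components and then bound the total by a single intersection number with the whole special fibre. First I would observe that, although the auxiliary differential $\omega^C$ depends on~$C$, it differs from~$\omega$ only by a power of the uniformizer~$\pi$, which is a unit on the generic fibre; hence the horizontal part $\cH = \cH_e - \cH_p$ of $\cK^C = \div_{\cX}(\omega^C)$ is the \emph{same} for every component~$C$, as already noted in the construction above. In particular the effective divisor $\cH_e$ occurring in \Cref{lem:A3} is independent of~$C$, so summing $n_C \leq \cH_e \cdot C$ over all multiplicity-$1$ components gives
\[ \sum_{C \text{ of multiplicity } 1} n_C \leq \sum_{C \text{ of multiplicity } 1} \cH_e \cdot C . \]

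Next I would pass from the multiplicity-$1$ components to the full special fibre. Writing $\cX_{\fm} = \sum_V m_V V$ with $V$ ranging over all vertical prime divisors and $m_V \geq 1$ their multiplicities, the multiplicity-$1$ components are precisely those~$V$ with $m_V = 1$. Since $\cH_e$ is effective and shares no component with any vertical~$V$, every intersection number $\cH_e \cdot V$ is non-negative; dropping the terms with $m_V \geq 2$ and using $m_V = 1$ for the rest therefore yields
\[ \sum_{C \text{ of multiplicity } 1} \cH_e \cdot C \leq \sum_V m_V\,(\cH_e \cdot V) = \cH_e \cdot \cX_{\fm} . \]

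The last step is to compute $\cH_e \cdot \cX_{\fm}$ on the generic fibre. By \cite[Proposition~III.2.5]{lang:arakelov} the horizontal closure of a closed point~$Q$ of~$X$ meets~$\cX_{\fm}$ in $[k(Q):K]$ points counted with multiplicity, so bilinearity gives $\cH_e \cdot \cX_{\fm} = \deg(H_e)$, where $H_e$ is the divisor of zeros of~$\omega$ on~$X$ whose closure is~$\cH_e$. Writing $\div(\omega) = H_e - H_p$ with $H_p = \sum_{P \in D'} [P]$ the simple poles, and using that a nonzero meromorphic differential has $\deg \div(\omega) = 2g-2$, I obtain $\deg(H_e) = (2g-2) + \deg(H_p)$. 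Finally $\deg(H_p) = \sum_{P \in D'} [k(P):K] \leq \sum_{Q \in |D|} [k(Q):K] = n$ since $D' \subseteq D$, which combines to $\deg(H_e) \leq 2g-2+n$ and hence the claim. This is essentially a bookkeeping argument with intersection numbers, so I expect no genuine obstacle; the one place needing care is the passage to the full special fibre, where effectivity of~$\cH_e$ and the resulting non-negativity of the $\cH_e \cdot V$ are exactly what make the inequality point in the right direction.
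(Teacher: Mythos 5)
Your proof is correct, and it follows the paper's own skeleton for the first two steps: summing the bound of \Cref{lem:A3} over the multiplicity-$1$ components (your preliminary observation that $\cH$, hence $\cH_e$, is independent of~$C$ is right, and is in fact already recorded in the construction, where $\cK^C = \cH + \cV^C$ has ``the same horizontal divisor as before''), and then using effectivity of $\cH_e$ to enlarge the sum to $\sum_V m_V\,(\cH_e\cdot V) = \cH_e\cdot\cX_{\fm}$. The two arguments diverge only in how they evaluate $\cH_e\cdot\cX_{\fm}$. The paper substitutes $\cH_e = \cK + \cH_p - \cV$ and uses the arithmetic adjunction formula $\cK\cdot\cX_{\fm} = 2g-2$ together with $\cV\cdot\cX_{\fm}=0$ and $\cH_p\cdot\cX_{\fm} = \sum_{P\in D'}[k(P):K] \leq n$. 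You instead compute $\cH_e\cdot\cX_{\fm} = \deg_K(H_e)$ directly, via the fact that the horizontal closure of a closed point $P$ of the generic fibre meets $\cX_{\fm}$ in $[k(P):K]$ points counted with multiplicity, and then use the purely generic-fibre identity $\deg \div_X(\omega) = 2g-2$ to get $\deg H_e = 2g-2 + \deg H_p \leq 2g-2+n$. The two computations are essentially equivalent --- the adjunction input $\cK\cdot\cX_{\fm}=2g-2$ can itself be recovered from your fibre-degree formula plus $\cV\cdot\cX_{\fm}=0$ --- but your route is marginally more elementary, needing nothing about the relative canonical sheaf $\omega_{\cX/R}$ beyond its identification with $\Omega^1_{X/K}$ on the generic fibre, at the price of applying the fibre-degree formula to all of $\cH_e$ rather than only to~$\cH_p$. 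Both proofs are sound.
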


\begin{proof}
	The divisor $\cH_e$ is effective, hence $\cH_e\cdot C\geq 0$ for every component $C$ of $\cX_{\fm} = \sum_C m_C C$.
	By \Cref{lem:A3}, we have
	\[
		\sum_{C \text{ of multiplicity } 1} n_C \leq \sum_{C \text{ of multiplicity } 1} \cH_e\cdot C \leq \sum_{\text{ all } C} m_C \cH_e\cdot C = \cH_e \cdot \cX_{\fm} = (\cK + \cH_p - \cV) \cdot \cX_{\fm}.
	\]
	Here $\cK\cdot \cX_{\fm} = 2g-2$ by the adjunction formula \cite[Proposition 9.1.35]{liu2006algebraic}, and $\cV\cdot\cX_{\fm} = 0$ as $\cV$ is vertical \cite[Proposition 9.1.21(a)]{liu2006algebraic}.
	Finally, 
	\[ \cH_p\cdot \cX_{\fm} = \sum_{P \in D'} \cP\cdot \cX_{\fm} = \sum_{P \in D'} [k(P):K] \leq \sum_{P \in D} [k(P):K] = n \]
	by \cite[Proposition 9.1.30]{liu2006algebraic}, which finishes the proof.
\end{proof}

\subsection{Bounding the number of zeros of a logarithmic abelian integral}
We stay in the local setting as above but specialise to the case $R = \bZ_p$, so $Y = X \smallsetminus D$ is a curve over~$\bQ_p$ with regular model $\cY = \cX \smallsetminus \cD$ over~$\bZ_p$, and $\omega \in \rH^0(X, \Omega^1(D))$ is a nonzero log differential. Fix a base point $P_0 \in Y(\bQ_p)$. We prove a bound on the number of points~$P$ in a residue disc for which $\int_{P_0}^P \omega$ is equal to a prescribed constant. We need the following lemma, which is taken from \cite[Lemma 5.1]{mcpoonen} and proved by looking at the Newton polygon. %
\begin{lemma}\label{lem:newtpolyg}
	Let $f(t)\in\bQ_p[\![t]\!]$ be such that $f'(t)\in\bZ_p[\![t]\!]$ and let $m$ be the order of vanishing of $f'(t) \pmod p$ at $t=0$.
	If $m<p-2$, then $f$ has at most $m+1$ zeros in $p\bZ_p$.
\end{lemma}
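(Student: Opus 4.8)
The plan is to read off the $p$-adic valuations of the coefficients of $f$ from the two hypotheses and then apply the standard Newton polygon count for the zeros of a power series in a closed disc. Throughout, $v_p$ denotes the $p$-adic valuation normalised by $v_p(p)=1$.

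First I would write $f(t)=\sum_{k\geq 0}a_k t^k$ and translate the hypotheses into valuation estimates. Since $f'(t)=\sum_{j\geq 0}(j+1)a_{j+1}t^j$ lies in $\bZ_p[\![t]\!]$, we get $k\,a_k\in\bZ_p$, hence $v_p(a_k)\geq -v_p(k)$ for all $k\geq 1$. The condition that $\overline{f'}$ vanishes to order $m$ means that $(m+1)a_{m+1}$ is a unit while $k\,a_k\equiv 0\pmod p$ for $1\leq k\leq m$. Here the hypothesis $m<p-2$ enters first in a mild way: it forces every index $k\leq m+1$ to be prime to $p$, so $v_p(k)=0$ there, and therefore $a_{m+1}$ is a $p$-adic unit while $v_p(a_k)\geq 1$ for $1\leq k\leq m$. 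In particular $f\not\equiv 0$, and since $v_p(a_k)+k\geq k-v_p(k)\to\infty$, the series converges on the closed disc $\{v_p(t)\geq 1\}$ of $\Qpbar$.

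Next I would invoke the Newton polygon description of the zeros of a power series in a closed disc: for a nonzero series converging on $\{v_p(t)\geq 1\}$, the number of zeros there, counted with multiplicity, equals
\[ N \coloneqq \max\Bigl\{k : v_p(a_k)+k = \min_{j}\,(v_p(a_j)+j)\Bigr\}, \]
the abscissa of the right endpoint of the part of the Newton polygon of slope $\leq -1$. As the zeros lying in $p\bZ_p$ form a subset of those in $\{v_p(t)\geq 1\}\subseteq\Qpbar$, it then suffices to prove $N\leq m+1$.

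Finally I would carry out the decisive estimate. At $k=m+1$ one has $v_p(a_{m+1})+(m+1)=m+1$, so the minimum defining $N$ is at most $m+1$. For $k\geq m+2$ it suffices to show $v_p(a_k)+k>m+1$; using $v_p(a_k)\geq -v_p(k)$ this reduces to the inequality $k-v_p(k)\geq m+2$. Writing $j=v_p(k)$ so that $k\geq p^j$, and using $m\leq p-3$, this in turn reduces to the elementary bound $j\leq p^j-p+1$, valid for every $j\geq 0$. Hence no index $k\geq m+2$ attains the minimum, and $N\leq m+1$ as required. I expect the main obstacle to be precisely this bookkeeping: the borderline case $k=p$ (where $j=1$ forces the equality $j=p^j-p+1$) is exactly where the assumption $m<p-2$ is needed, so the hypothesis is sharp and must be tracked carefully throughout.
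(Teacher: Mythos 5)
Your proof is correct and takes essentially the route the paper intends: the paper gives no argument of its own but cites \cite[Lemma~5.1]{mcpoonen}, which is proved by exactly this Newton-polygon count, namely that the minimum of $v_p(a_k)+k$ is attained at some $k\leq m+1$ (since $a_{m+1}$ is a unit, as $p\nmid m+1$) and at no $k\geq m+2$ (since $k-v_p(k)\geq m+2$ there). One small repair: your ``elementary bound'' $j\leq p^j-p+1$ is false at $j=0$ for $p\geq 3$, but this is harmless, because for $p\nmid k$ one has $k-v_p(k)=k\geq m+2$ directly, and the bound is only needed for $j\geq 1$, where it holds with equality precisely at $j=1$ (the case $k=p$, where the hypothesis $m<p-2$ is used sharply, as you note).
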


\begin{prop}
	\label{lem:integral-bound-disc}
	Let $C \subseteq \cX_{p}^{\sm}$ be a multiplicity-1 component, let $\widetilde{R} \in (C^{\sm} \cap \cY)(\bF_p)$ and let $m = \ord_{\widetilde{R}}(\widetilde{\omega}^C)$. Assume $m < p-2$. Then, for any $c\in \bQ_p$, the number of $P \in \cY(\bZ_p)$ reducing to $\widetilde{R}$ and satisfying $\int_{P_0}^P \omega = c$ is at most $m+1$.
\end{prop}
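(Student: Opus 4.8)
The plan is to reduce the statement to \Cref{lem:newtpolyg} by expanding the restriction of the abelian integral to the residue disc of $\widetilde R$ as a power series in a local parameter, and then reading off the order of vanishing of its derivative from $m$.

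First I would fix a rational function $t$ on $\cX$, regular near $\widetilde R$ and reducing to a uniformiser of $\widetilde R$ on $\cX_{p}$, exactly as in \S\ref{sec:tiny-integrals}, so that $t$ induces a bijection $U(\widetilde R) \xrightarrow{\sim} p\bZ_p$ from the residue disc of $\widetilde R$ onto $p\bZ_p$. Since $\widetilde R \in (C^{\sm} \cap \cY)(\bF_p)$ does not lie on $\cD$, every point of $U(\widetilde R)$ belongs to $\cY(\bZ_p)$. In this parameter the chosen scalar multiple $\omega^C = p^j \omega$ from the reduction of log differentials (with $j \in \bZ$) can be written as $\omega^C = f(t)\,\rd t$ with $f = \sum_n a_n t^n \in \bZ_p[\![t]\!]$, and its reduction on $C^{\sm}$ near $\widetilde R$ is $\widetilde\omega^C = \bar f(\bar t)\,\rd\bar t$. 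Because $\bar t$ is a local coordinate at $\widetilde R$, the order of vanishing of $\bar f$ at $t = 0$ equals $\ord_{\widetilde R}(\widetilde\omega^C) = m$.

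Next I would express the integral on the disc. Let $P_1 \in U(\widetilde R)$ be the unique point with $t(P_1) = 0$. For $P \in U(\widetilde R)$, additivity of the abelian integral gives $\int_{P_0}^P \omega^C = \int_{P_0}^{P_1}\omega^C + \int_{P_1}^P \omega^C$, where the first summand is a constant independent of $P$ and the second is a tiny integral equal to $F(t(P))$ by \eqref{eq:tiny-int-power-series}, with $F = \sum_n \tfrac{a_n}{n+1} t^{n+1}$ a formal antiderivative of $f$ (note $F(t(P_1)) = F(0) = 0$). As $\int_{P_0}^P \omega^C = p^j \int_{P_0}^P \omega$, the condition $\int_{P_0}^P \omega = c$ is equivalent to $h(t(P)) = 0$ where $h(t) \coloneqq F(t) + \int_{P_0}^{P_1}\omega^C - p^j c \in \bQ_p[\![t]\!]$. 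Finally I would invoke \Cref{lem:newtpolyg}: we have $h'(t) = f(t) \in \bZ_p[\![t]\!]$, and $h' \bmod p = \bar f$ has order of vanishing $m$ at $t = 0$; since $m < p-2$ by hypothesis, $h$ has at most $m+1$ zeros in $p\bZ_p$. As $t$ is a bijection onto $p\bZ_p$, the number of $P \in U(\widetilde R)$ with $\int_{P_0}^P\omega = c$ is at most $m+1$, as claimed.

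The step I expect to need the most care is the identification in the first paragraph, namely that the scalar making $f$ integral and primitive is (up to a unit in $\bZ_p^\times$) the same one defining $\omega^C$, and that $\bar f\,\rd\bar t$ genuinely equals $\widetilde\omega^C$ near $\widetilde R$, so that the Newton-polygon input $\ord_0(\bar f)$ is exactly $\ord_{\widetilde R}(\widetilde\omega^C) = m$. Once this matching of the local expansion with the reduction is in place, the remainder is a direct combination of the tiny-integral formula \eqref{eq:tiny-int-power-series} with \Cref{lem:newtpolyg}, and the harmless scaling by $p^j$ only shifts the target constant from $c$ to $p^j c$.
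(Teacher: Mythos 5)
Your proof is correct and follows essentially the same route as the paper's: the paper likewise rescales so that $\omega = \omega^C$ (adjusting $c$), picks a rational function $t$ reducing to a uniformiser at $\widetilde{R}$ to identify the residue disc with $p\bZ_p$, observes that the reduction of the derivative of the resulting power series vanishes to order exactly $m$ at $t=0$, and concludes by \Cref{lem:newtpolyg}. The details you flag as delicate (that $\bar f\,\rd\bar t$ equals $\widetilde\omega^C$ near $\widetilde{R}$, and the bookkeeping of the scalar $p^j$) are exactly the steps the paper leaves implicit, and your treatment of them is sound.
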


\begin{proof}
	We may rescale $\omega$ (and $c$) and assume $\omega = \omega^C$. Note that $m\geq 0$ as $\widetilde{R}$ is not contained in the support of $\cD_{p}$.
	By Hensel's lemma, $\widetilde{R}$ lifts to a point $R\in\cY(\bZ_p)$. Let $t$ be a rational function on $X$ that reduces to a uniformiser on $\cX_{p}$ at $\widetilde{R}$.
	The function $t$ maps the residue disc of $\widetilde{R}$ bijectively to $p\bZ_p$.
	The order of vanishing of $f'(t) ~(\modulo p)$ at $t=0$ is equal to $m$ and the result follows from \Cref{lem:newtpolyg}.
\end{proof}

By summing over all residue discs of $\bF_p$-points lying on a fixed component we obtain the following.

\begin{prop}
	\label{thm:integral-bound-component}
	Let $C \subseteq \cX_{p}^{\sm}$ be a multiplicity-1 component and let $n_C$ be the number of zeros of $\widetilde{\omega}^C$ in $(C^{\sm} \cap \cY)(\bF_p)$ counted with multiplicities. Assume $n_C < p-2$. Then, for any $c\in \bQ_p$, the number of $P \in \cY(\bZ_p)$ reducing onto $C$ and satisfying $\int_{P_0}^P \omega = c$ is at most $\#(C^{\sm} \cap \cY)(\bF_p) + n_C$.
\end{prop}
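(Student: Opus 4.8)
The plan is to deduce this component-wise bound from the single-disc bound of \Cref{lem:integral-bound-disc} by summing over all residue discs attached to $\bF_p$-points of $C^{\sm} \cap \cY$.

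First I would note that any $P \in \cY(\bZ_p)$ reducing onto $C$ in fact reduces to a point of $(C^{\sm} \cap \cY)(\bF_p)$: its reduction lies in the smooth locus $\cX_p^{\sm}$ (as in \Cref{def:reduction-map}), lies on $C$ by hypothesis, and avoids $\cD_p$ because $P$ is $\bZ_p$-integral. The set of such $P$ therefore partitions according to the point $\widetilde R \in (C^{\sm} \cap \cY)(\bF_p)$ to which it reduces, i.e.\ by residue disc.

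Next, for each such $\widetilde R$ set $m_{\widetilde R} \coloneqq \ord_{\widetilde R}(\widetilde\omega^C)$. The key observation is that every $m_{\widetilde R}$ is nonnegative, since the restriction of $\widetilde\omega^C$ to $C^{\sm} \cap \cY$ is holomorphic; consequently, from $n_C = \sum_{\widetilde R} m_{\widetilde R}$ and nonnegativity of each summand we get $m_{\widetilde R} \leq n_C < p-2$ for every $\widetilde R$. Thus the hypothesis $m < p-2$ of \Cref{lem:integral-bound-disc} is satisfied at every residue disc, and that proposition bounds the number of $P$ in the disc of $\widetilde R$ satisfying $\int_{P_0}^P \omega = c$ by $m_{\widetilde R} + 1$.

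Finally I would sum these local bounds over all $\bF_p$-points lying on $C^{\sm} \cap \cY$:
\[
\sum_{\widetilde R \in (C^{\sm} \cap \cY)(\bF_p)} (m_{\widetilde R} + 1) = \#(C^{\sm} \cap \cY)(\bF_p) + \sum_{\widetilde R} m_{\widetilde R} = \#(C^{\sm} \cap \cY)(\bF_p) + n_C,
\]
which is exactly the claimed bound. There is no serious obstacle here, as the analytic content lives entirely in \Cref{lem:integral-bound-disc}; the only point requiring care is the passage from the single global hypothesis $n_C < p-2$ to the per-disc hypothesis $m_{\widetilde R} < p-2$, which is precisely where the nonnegativity of the local orders $m_{\widetilde R}$ — and hence the holomorphicity of $\widetilde\omega^C$ on the affine part $C^{\sm}\cap\cY$ — is essential.
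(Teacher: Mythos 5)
Your proof is correct and is essentially identical to the paper's: partition the points reducing onto $C$ by their residue disc, apply \Cref{lem:integral-bound-disc} to each disc, and sum the bounds $m_{\widetilde R}+1$ over $(C^{\sm}\cap\cY)(\bF_p)$. You even make explicit the one implicit step in the paper's argument, namely that $m_{\widetilde R}\geq 0$ (since $\widetilde\omega^C$ is holomorphic on $C^{\sm}\cap\cY$) is what upgrades the global hypothesis $n_C<p-2$ to the per-disc hypothesis $m_{\widetilde R}<p-2$.
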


\begin{proof}
	Every point $P \in \cY(\bZ_p)$ reducing onto~$C$ reduces to a unique $\widetilde{R} \in (C^{\sm} \cap \cY)(\bF_p)$. Let $m_{\widetilde{R}} = \ord_{\widetilde{R}}(\widetilde{\omega}^C)$, so that $n_C = \sum m_{\widetilde{R}}$. The assumption on~$p$ implies $m_{\widetilde{R}} < p-2$, so \Cref{lem:integral-bound-disc} implies that the number of $P \in \cY(\bZ_p)$ reducing to $C$ and satisfying $\int_{P_0}^P \omega = c$ is equal to
	\begin{align*}
		\sum_{\widetilde{R} \in (C^{\sm} \cap \cY)(\bF_p)} \#\left\{ P \in \cY(\bZ_p) \;\Big\vert\; \red_p(P) = \widetilde{R} \text{ and } \int_{P_0}^P \omega = c \right\} \leq \sum_{\widetilde{R} \in (C^{\sm} \cap \cY)(\bF_p)} (m_{\widetilde{R}} + 1),
	\end{align*}
	which is equal to $\#(C^{\sm} \cap \cY)(\bF_p) + n_C$.
\end{proof}

\subsection{Effective bounds on the number of $S$-integral points}
We return to the global situation, %
so $Y = X \smallsetminus D$ is an affine curve over a number field $K$ with regular model $\cY = \cX \smallsetminus \cD$ over~$\cO_K$ which is $D$-transversal over primes in~$S$. We use the results of the previous subsections to prove a Coleman bound on the number of $S$-integral points. %

\begin{thm}\label{thm:zerobound-fixed-reduction-type}
	Let $\Sigma$ be an $S$-integral reduction type such that the Affine Chabauty Condition~\eqref{eq:chabauty-condition-over-number-field} is satisfied. Let $\fp \not\in S $ be a prime with $K_{\fp}=\bQ_p$ and assume $p > 2g+n$.
	Let $C \subseteq \cX_{\fp}$ be the component of the mod-$\fp$ fibre determined by the reduction type~$\Sigma$.
	Then
	\[
	\# \cY(\cO_{K,S})_{\Sigma} \leq \#(C^{\sm} \cap \cY)(\bF_p) + 2g-2+n.
	\]
\end{thm}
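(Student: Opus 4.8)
The plan is to combine the existence of a Chabauty function from \Cref{sec:existence-of-chabauty-function} with the Coleman-style zero count developed in the preceding subsections. First I would invoke \Cref{thm:existence-of-chabauty-function-nf}: since $\Sigma$ satisfies the Affine Chabauty Condition~\eqref{eq:chabauty-condition-over-number-field} and $\fp \notin S$, there is a nonzero log differential $0 \neq \omega \in \rH^0(X_{K_{\fp}}, \Omega^1(D))$ and a constant $c \in K_{\fp} = \bQ_p$ such that $\rho(P) = \int_{P_0}^P \omega - c$ vanishes on $\cY(\cO_{K,S})_{\Sigma}$. Because $K_{\fp} = \bQ_p$, the base change $\cX_{\bZ_p}$ of $\cX$ to $\cO_{\fp} = \bZ_p$ is a regular model of $X_{\bQ_p}$ and $\cY(\cO_{\fp}) = \cY(\bZ_p)$, so the local reduction theory for log differentials of this section applies verbatim to $\omega$ and $\cX_{\bZ_p}$, with the relevant component being exactly the $C$ cut out by $\Sigma_{\fp}$.

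Next I would transport $\cY(\cO_{K,S})_{\Sigma}$ into this local picture. The inclusion $Y(K) \hookrightarrow Y(\bQ_p)$ restricts to an injection $\cY(\cO_{K,S})_{\Sigma} \hookrightarrow \cY(\bZ_p)$ (using $\fp \notin S$, so that $S$-integral points are $\fp$-integral), and by the definition of reduction type every such point reduces onto the component $C$ modulo $\fp$. Since $\rho$ vanishes on $\cY(\cO_{K,S})_{\Sigma}$, its image is contained in the set
\[ \left\{ P \in \cY(\bZ_p) \;\middle|\; P \text{ reduces onto } C \text{ and } \int_{P_0}^P \omega = c \right\}. \]
It therefore suffices to bound the cardinality of this set, which is precisely the quantity controlled by \Cref{thm:integral-bound-component}.

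To apply that proposition I would verify its two hypotheses. Since $\Sigma_{\fp}$ is an integral reduction type, $C$ is a component of $\cX_{\fp}^{\sm} \smallsetminus \cD_{\fp}$, hence a multiplicity-$1$ component of the special fibre (see \Cref{rem:integral-reduction-types-betts}). For the bound on $n_C$, \Cref{lem:A4} gives $n_C \leq \sum_{C' \text{ of multiplicity } 1} n_{C'} \leq 2g - 2 + n$, and the assumption $p > 2g + n$ yields $n_C \leq 2g - 2 + n < p - 2$. \Cref{thm:integral-bound-component} then bounds the displayed set by $\#(C^{\sm} \cap \cY)(\bF_p) + n_C \leq \#(C^{\sm} \cap \cY)(\bF_p) + 2g - 2 + n$, which is the claimed inequality. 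The argument is essentially a direct assembly of earlier results; the only points requiring care are the identification $K_{\fp} = \bQ_p$, which lets the number-field differential $\omega$ feed into the $\bZ_p$-local reduction machinery without ramification or residue-degree complications, and the verification that the component $C$ furnished by $\Sigma_{\fp}$ indeed has multiplicity one and sufficiently small $n_C$.
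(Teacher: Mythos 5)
Your proposal is correct and follows essentially the same route as the paper's proof: invoke \Cref{thm:existence-of-chabauty-function-nf} to get the vanishing function $\rho$, include $\cY(\cO_{K,S})_{\Sigma}$ into the set of $P \in \cY(\bZ_p)$ reducing onto~$C$ with $\int_{P_0}^P \omega = c$, and bound that set via \Cref{thm:integral-bound-component} together with the estimate $n_C \leq 2g-2+n < p-2$ from \Cref{lem:A4}. Your explicit verifications that $C$ has multiplicity one and that $K_{\fp}=\bQ_p$ lets the global differential feed into the local machinery are exactly the points the paper relies on (the rescaling of $\omega$ needed to define $\widetilde{\omega}^C$, which the paper mentions explicitly, is subsumed in your appeal to the reduction theory and is handled inside \Cref{thm:integral-bound-component} anyway).
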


\begin{proof}
	By \Cref{thm:existence-of-chabauty-function-nf}, there exist a non-zero log differential~$\omega \in \rH^0(X_{K_{\fp}}, \Omega^1(D))$ and a constant~$c$ such that $\int_{P_0}^P \omega = c$ for all $P \in \cY(\cO_{K,S})_{\Sigma}$. Hence, we have the inclusion
	\[ \cY(\cO_{K,S})_{\Sigma} \subseteq \left\{ P \in \cY(\bZ_p) \;\Big\vert\; \red_p(P) \in C \text{ and } \int_{P_0}^P \omega = c \right\}. \]
	We will bound the size of the latter set using \Cref{thm:integral-bound-component}. Rescaling $\omega$ and~$c$ by a power of~$p$ yields a log differential $\omega^C$ and a constant~$c^C$ such that the restriction $\widetilde{\omega}^C \coloneqq \omega^C\vert_{C^{\sm}}$ is defined and nonzero. Let $n_C$ be the number of zeros of $\widetilde{\omega}^C$ in $(C^{\sm} \cap \cY)(\bF_p)$ counted with multiplicity. \Cref{lem:A4} implies $n_C \leq 2g-2+n$, and this is $< p-2$ by the assumption on~$p$. So \Cref{thm:integral-bound-component} applies and yields
	\begin{equation}
		\label{eq:bound-with-fixed-reduction-type}
		\#\cY(\cO_{K,S})_{\Sigma} \leq \#(C^{\sm} \cap \cY)(\bF_p) + n_C.
	\end{equation}
	The claim now follows from $n_C \leq 2g-2+n$.
\end{proof}

\begin{proof}[Proof of \Cref{thm:bound}]
	The idea is to bound $\cY(\bZ_S)_{\Sigma}$ via \eqref{eq:bound-with-fixed-reduction-type} for each $S$-integral reduction type and sum over all~$\Sigma$. Let us first consider the situation with fixed prime-to-$p$ reduction type $\Sigma' \in \prod_{\ell \neq p} \cC_{\ell}$. %
	Let $\cY(\bZ_S)_{\Sigma'}$ be the set of $S$-integral points whose prime-to-$p$ reduction type is equal to~$\Sigma'$. For a multiplicity-1 component $C \subseteq \cX_{p}$ with $(C^{\sm} \cap \cY)(\bF_p) \neq \emptyset$, write $\Sigma = (\Sigma',C)$ for the reduction type whose prime-to-$p$ part is~$\Sigma'$ and whose $p$-part is $\Sigma_p = C^{\sm} \smallsetminus \cD$. Since $p \not \in S$, the cuspidal part of~$\Sigma$ (\Cref{def:C-Sigma}) is determined by $\Sigma'$. The Affine Chabauty Condition~\eqref{eq:uniform-chabauty-condition-intro} implies \eqref{eq:chabauty-condition-over-number-field}, so by \Cref{thm:existence-of-chabauty-function-nf} there exist a non-zero log differential~$\omega = \omega(\Sigma') \in \rH^0(X_{\bQ_p}, \Omega^1(D))$ and constants $c_{\Sigma} \in \bQ_p$ such that $\int_{P_0}^P \omega = c_{\Sigma}$ for all $P \in \cY(\bZ_S)_{\Sigma}$ of reduction type $\Sigma = (\Sigma',C)$. Let $n_C$ be the number of zeros of $\widetilde{\omega}^C$ in $(C^{\sm} \cap \cY)(\bF_p)$ counted with multiplicity, so we have
	\[ \#\cY(\bZ_S)_{\Sigma} \leq \#(C^{\sm} \cap \cY)(\bF_p) + n_C \]
	for all $\Sigma = (\Sigma',C)$ by \eqref{eq:bound-with-fixed-reduction-type}. Summing over all~$C$ and using \Cref{lem:A4} we obtain %
	\begin{equation}
		\label{eq:bound-fixed-prime-to-p-reduction-part}
		\#\cY(\bZ_S)_{\Sigma'} \leq \sum_C (\#(C^{\sm} \cap \cY)(\bF_p) + n_C) \leq \#\cY^{\sm}_{p}(\bF_p) + 2g - 2 + n.
	\end{equation}
	Finally, since $\#\cY(\bZ_S) = \sum_{\Sigma'} \#\cY(\bZ_S)_{\Sigma'}$, a bound on $\#\cY(\bZ_S)$ is obtained by multiplying the right hand side of~\eqref{eq:bound-fixed-prime-to-p-reduction-part} by the number of all possible prime-to-$p$ reduction parts, which is
	\[ \prod_{\ell \neq p} \# \cC_{\ell} = \prod_{\ell \in S}(\#\cC_{\ell}^0 + \#\cC_{\ell}^1) \prod_{\ell \not\in S \cup \{p\}} \#\cC_{\ell}^0 = \prod_{\ell\in S} (n_\ell+\# (\cX_{{\ell}}^{\sm} \cap \cD)(\bF_\ell)) \prod_{\ell \not\in S \cup \{p\}} n_{\ell}. \qedhere \]
\end{proof}

\begin{rem}
	\label{rem:improved-bound}
	The bound in \Cref{thm:bound} can be improved by observing that different reduction types can lead to the same combination $(\omega,c)$ and hence to the same Chabauty function $\rho$ as in \eqref{eq:rho-function-intro}.
	Namely, let $T$ denote the set of all primes $\ell$ for which the horizontal divisor $\cD$ intersects $\cX_{\ell}$ in more than one component. Note that $T$ is a subset of the set of primes of bad reduction for $\cX$. In particular, $T$ is finite. We can assume that $(\cX_{\ell}^{\sm} \cap \cD)(\bF_{\ell}) \neq \emptyset$ for all $\ell \in S$. If two $S$-integral reduction types $\Sigma$ and $\Sigma'$ have the same cuspidal part and satisfy $\Sigma_\ell=\Sigma'_\ell$ for every $\ell\in T$, then they give rise to the same pair $(\omega,c)$. 
	Indeed, by \Cref{selmer-sets-constant} we then have $\fS_\ell(P_0,\Sigma_\ell)=\fS_\ell(P_0,\Sigma'_\ell)$ for all primes $\ell$, so we get the same $(\omega,c)$ by \Cref{rem:more-red-types-than-eta-c}. %
	Moreover, by \Cref{rem:redundant-reduction-types}, at primes in~$S \smallsetminus T$ any cuspidal reduction type automatically covers all component reduction types, so it is enough to consider $\Sigma$ with $S \smallsetminus T \subseteq C(\Sigma)$. Moreover, by the same remark, at primes $\ell$ in $S \cap T$ only those components not containing a point in $(\cX_{\ell}^{\sm} \cap \cD)(\bF_{\ell})$ need to be considered separately.	This shows that the bound of \Cref{thm:bound} can be improved to
	\begin{align}
		\label{eq:improved-bound}
		\#\cY(\bZ_S) \leq (\#\cY^{\sm}_{p}(\bF_p) + 2g - 2 + n) \sum_{S \smallsetminus T \subseteq S_0 \subseteq S} \; \prod_{\ell\in S_0} \#(\cX_{\ell}^{\sm} \cap \cD)(\bF_\ell) \prod_{\ell \in T\smallsetminus(S_0\cup\{p\})} n_{\ell}'
	\end{align}
	where we set $n_{\ell}' = n_{\ell}$ for $\ell \not\in S$ but define $n_{\ell}'$ as the number of components of~$\cX_{\ell}$ containing at least one smooth $\bF_{\ell}$-point but no smooth $\bF_{\ell}$-point on~$\cD$ for $\ell \in S$. In the case $S = \emptyset$ this simplifies to
	\begin{equation}
		\label{eq:improved-bound-integral-points}
		\#\cY(\bZ) \leq (\#\cY^{\sm}_{p}(\bF_p) + 2g - 2 + n) \prod_{\ell \in T \smallsetminus \{p\}} n_{\ell}'.
	\end{equation}
	We will use the refined bound \eqref{eq:improved-bound-integral-points} in \Cref{thm:even-deg-hyperell-bound}.
\end{rem}

\begin{example}
	\label{ex:genus1-bound}
	Consider the affine curve $\cY \subseteq \bA^2_{\bZ}$ defined by the equation
	\[ y^3 = x^3 + 7x^2 + x. \]
	Our results can be used to bound the number of solutions to this equation in $\bZ[1/q]$ for any fixed prime~$q$. A smooth compactification of $Y = \cY_{\bQ}$ is the rank~1 elliptic curve $X: Y'^2 + 7 Y' Z'^3 = X'^3 - Z'^3$, into which $Y$ embeds via $(x,y) \mapsto [y : 1 : x]$. Let $\cX/\bZ$ be the Weierstrass model defined by the same equation. This model is regular, the primes of bad reduction are $3$ and $5$, and both bad fibres have only one component. In the coordinates $x' = X'/Z'$ and $y' = Y'/Z'$, the divisor of cusps in~$X$ is $D = \{ y' = 0 \}$, consisting of the rational point $Q_1 = (1,0)$ and the quadratic point $Q_2$ with $Q_2(\Qbar) = \{ (\zeta_3,0), (\zeta_3^{-1}, 0)\}$, where $\zeta_3$ is a primitive third root of unity. The horizontal closures $\cQ_1$ and $\cQ_2$ are normal and only intersect in the fibre over~$3$, so the model $\cX$ is $D$-transversal at all primes except~$3$. We take $S = \{q\}$ for a prime $q \neq 3$. (In order to treat the case $q = 3$ one could blow up at the point in the mod-3 fibre where $\cQ_1$ and $\cQ_2$ meet and work with the resulting $D$-transversal model.) Let $\Sigma$ be an $S$-integral reduction type. By \Cref{rem:redundant-reduction-types} it is enough to consider reduction types with $\Sigma_q$ cuspidal. The number of such reduction types is $\#\cD(\bF_q)$, which is~1 or 3, depending on whether $q$ splits in $\bQ(\zeta_3)$. The Affine Chabauty Condition~\eqref{eq:chabauty-condition-intro} is satisfied:
	\[ r + \#C(\Sigma) = 1 + 1 < 1 + 1 + 1 - 1 = g + \#|D| + n_2(D) - 1. \]
	Thus, by \Cref{thm:zerobound-fixed-reduction-type}, there are at most $\#\cY(\bF_p) + 3$ $S$-integral points of reduction type~$\Sigma$, for any prime $p > 2g+n = 5$ with $p \neq q$. Assume that $q \neq 7$, then taking $p = 7$, we have $\#\cY(\bF_p) = 3$, so we obtain the bound $\cY(\bZ[1/q])_{\Sigma} \leq 6$. Multiplying with the number of reduction types which are cuspidal at~$q$, we obtain
	\[ \#\cY(\bZ[\tfrac1{q}]) \leq \begin{cases}
		6, & \text{if $q \equiv -1 \bmod 3$},\\
		18, & \text{if $q \equiv +1 \bmod 3$}.
	\end{cases} \]
\end{example}

\section{Application: even degree hyperelliptic curves}
\label{sec:examples}

\subsection{Recovering Linear Quadratic Chabauty}
\label{sec:recovering-linear-quadratic-chabauty}

Let $\cY/\bZ$ be a hyperelliptic curve defined by an affine equation $y^2 = f(x)$ with $f \in \bZ[x]$ squarefree of degree~$2g+2$. Assume that the leading coefficient of~$f$ is a square in~$\bZ$, so that $Y = \cY_{\bQ}$ has two rational cusps at infinity. Suppose that the Mordell--Weil rank~$r$ equals~$g$. Let~$p$ be a prime of good reduction and fix $P_0 \in \cY(\bZ)$ for simplicity. In \cite{GM:LinearQuadraticChabauty}, the authors use $p$-adic heights to construct a nontrivial function $\rho \colon \cY(\bZ_p) \to \bQ_p$ and a finite set~$T$ such that $\rho(\cY(\bZ)) \subseteq T$. On the other hand, our \Cref{thm:existence-of-chabauty-function-nf} (applied to a desingularised model, see §\ref{sec:general-models}) shows the existence of a log differential $0 \neq \omega \in \rH^0(X_{\bQ_p}, \Omega^1(D))$ and finitely many constants $c_{\Sigma}$ indexed by integral reduction types such that $\int_{P_0}^P \omega = c_{\Sigma}$ holds for all $P \in \cY(\bZ)_{\Sigma}$. It is natural to ask whether we get the same functions, i.e. whether the function $\rho$ of \cite{GM:LinearQuadraticChabauty} is given by the integral of a log differential. This is indeed the case and becomes clear once we recall the construction of~$\rho$. 

For each prime~$\ell$ we have a local Coleman--Gross height pairing $h_{\ell}$ defined on degree zero divisors on $X_{\bQ_{\ell}}$ with disjoint support, taking values in~$\bQ_p$ \cite{CG89}. For $\ell = p$ the pairing depends on a choice of subspace $W \subseteq \rH^1_{\dR}(X/\bQ_p)$ which is complementary to the space of holomorphic differentials and isotropic with respect to the cup product pairing. The choice is used to associate to each degree zero divisor $E$ on $X_{\bQ_p}$ a uniquely determined meromorphic differential $\omega_E$ with at most simple poles and with residue divisor equal to~$E$. The local height pairing at~$p$ is then defined as
\[ h_p(E,F) \coloneqq \int_F \omega_E. \]
For $\ell \neq p$ the height pairing is defined in terms of intersection theory on a regular model of~$X$ over~$\bZ_{\ell}$ by
\[ h_{\ell}(E,F) \coloneqq -\log(\ell)\cdot i_{\ell}(\cE, \Psi_{\ell}(F)) \]
where $\cE$ is the horizontal extension of~$E$ and $\Psi_{\ell}(F) = \cF + \Phi_{\ell}(F)$ is as in §\ref{sec:intersection map}. The global Coleman--Gross height pairing is then defined for degree zero divisors with disjoint support on~$X$ as the sum of the local height pairings over all places:
\[ h(E,F) \coloneqq \sum_{\ell} h_{\ell}(E,F). \]
The global height pairing vanishes when one of the divisors is principal, so it is well-defined on the Jacobian of~$X$. This is not true for the local height pairings, although one can show that when $E$ is supported on cusps, the function $F \mapsto h_{\ell}(E,F)$ is well-defined on the \emph{generalised} Jacobian~$J_Y$.

On a hyperelliptic curve $y^2 = f(x)$ with $\deg(f) = 2g+2$ as above we have two $\bQ$-rational points at infinity. Denote them by $\infty_+$ and $\infty_-$ and fix the degree zero divisor $E \coloneqq \infty_+ - \infty_-$ as first argument of the height pairings. The function $h(E,-)\colon J(\bQ) \to \bQ_p$ is a well-defined homomorphism on the Jacobian. One may assume that the closure of $J(\bQ)$ has finite index in $J(\bQ_p)$, for otherwise one can find a non-zero holomorphic differential whose integral vanishes on all of $X(\bQ)$ by the classical Chabauty method. When the closure of $J(\bQ)$ has finite index in $J(\bQ_p)$, every homomorphism $J(\bQ) \to \bQ_p$ is given by the integral of a holomorphic differential, in particular there exists $\omega_{\hol} \in \rH^0(X_{\bQ_p}, \Omega^1)$ such that
\[ h(\infty_+ - \infty_-, F) = \int_F \omega_{\hol} \quad \text{for all $F \in J(\bQ)$}. \]
At $\ell = p$, the local height $h_p(\infty_+ - \infty_-,-)$ is given by the integral of a logarithmic differential $\omega_{E}$ with residues $1$ and $-1$ at $\infty_+$ and $\infty_-$, respectively. Subtracting the local height at~$p$ from the global height and taking $F \coloneqq P - P_0$, we have
\[ \int_{P_0}^P \omega_{\hol} - \int_{P_0}^{P} \omega_E = \sum_{\ell \neq p} h_{\ell}(\infty_+ - \infty_-, P - P_0) \quad \text{for $P \in Y(\bQ)$}. \]
The expression on the left hand side defines a function $\rho\colon \cY(\bZ_p) \to \bQ_p$, whereas the right hand side takes only finitely many values for $P \in \cY(\bZ)$, corresponding to the possible reduction types of~$P$. 

The function $\rho$ obtained in this way is given as the integral of the log differential $\omega \coloneqq \omega_{\hol} - \omega_{E}$ and thus of the same form as the function obtained from our \Cref{thm:existence-of-chabauty-function-nf}. Apart from working also for primes~$p$ of bad reduction, our approach has the advantage that it yields directly the log differential $\omega$ without the need to determine $\omega_{\hol}$ and $\omega_E$ individually. Note that the splitting of $\omega$ into
$\omega_{\hol}$ and $\omega_E$ depends on the choice of complementary subspace $W \subseteq \rH^1_{\dR}(X/\bQ_p)$ but $\omega$ itself is unique up to scaling, independently of any choices.

The height-based approach can be generalised to more general affine curves admitting a divisor of degree zero supported in the cusps. This is satisfied whenever $D$ contains at least two closed points, and in this case there are $\#|D|-1$ independent such divisors. On the other hand, our Affine Chabauty Condition~\eqref{eq:uniform-chabauty-condition-intro} in the case $r = g$ and $S = \emptyset$ reads $0 < \#|D| + n_2(D) - 1$, so our approach yields more independent Chabauty functions in general and also applies when there is just a single cusp with non-totally real residue field, for example for even-degree hyperelliptic curves $y^2 = f(x)$ where $f$ has negative leading coefficient. One reason for this is that the local height pairing $h_{\ell}$ at a prime $\ell \neq p$ only captures a total intersection number $i_{\ell}(-,-)$ in the mod-$\ell$ fibre of a regular model, whereas our $D$-intersection map captures the intersection numbers $i_x(-,-)$ at individual closed points~$x$, which by \eqref{eq:m-intersection-number} is more refined information. It seems likely that the height-based method corresponds to the Chabauty--Kim method for the quotient of the abelianised fundamental group of~$Y$ described in \cite[Remark~4.3]{LLM:LQChabAffine}, which is an extension of the Tate module of $J$ by a Tate representation, whereas our method corresponds to the full abelianisation, which is an extension of the Tate module by an \emph{Artin--Tate} representation. In line with Dogra's terminology \cite{dogra:generalised-height-pairings}, one might call the weaker variant ``skimmed linear Chabauty''.

On the other hand, using the methods of this paper we cannot handle integral points on hyperelliptic curves of \emph{odd} degree with $r = g$ as in \cite{BBM16}. In this case there is only one point at infinity and the proof of Theorem~3.1 of loc.\ cit.\ uses the function $x \mapsto h(x-\infty,x-\infty)$ in which both arguments of the height pairing depend on~$x$, leading to functions involving not just single but double Coleman integrals. An ``Affine Quadratic Chabauty'' method should be able to recover the results of \cite{BBM16} as a special case.

\subsection{A bound for integral points on hyperelliptic curves over $\bQ$}
\label{sec:split-hyperelliptic}

Consider a hyperelliptic curve $\cY/\bZ$ defined by an affine equation $y^2 = f(x)$ with $f \in \bZ[x]$ monic and squarefree of degree~$2g+2$ and suppose that the Mordell--Weil rank~$r$ equals~$g$. As discussed in §\ref{sec:recovering-linear-quadratic-chabauty}, we obtain a non-zero log differential whose integral takes only finitely many values on integral points. We show that in many cases it takes just one value. With the results from §\ref{sec:bounds} this leads to a good bound on $\#\cY(\bZ)$.

\begin{thm}
	\label{thm:even-deg-hyperell-bound}
	Let $f=x^{2g+2} + f_{2g+1}x^{2g+1}+\dots+f_0\in\bZ[x]$ be squarefree with $2g+2\geq 4$. %
	Assume that the reduction of $f$ modulo every odd prime is not a square.
	Assume that either the coefficient $f_{2g+1}$ is odd, or that $f=4P+Q^2$ with $P,Q\in\bZ[x]$ and $Q$ monic of degree $g+1$ such that the reduction of $y^2 + Q(x)y - P(x) \in \bZ[x][y]$ modulo $2$ does not have a zero $y_0\in\bF_2[x]$ with $\deg(y_0)\leq g+1$.
	Let $p>2g+2$ be a prime such that the reduction of $f$ modulo $p$ remains separable.
	Assume that the hyperelliptic curve given by $y^2 = f(x)$ over~$\bQ$ has Mordell--Weil rank equal to $g$.
	Then the affine curve
	\begin{align}\label{eq:hyp-curve}
		\cY \colon\quad y^2 = f(x)
	\end{align}
	has at most $\# \cY(\bF_p) + 2g$ integral points.
\end{thm}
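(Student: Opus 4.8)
The plan is to apply the refined bound \eqref{eq:improved-bound-integral-points} for $S=\emptyset$ to a suitably chosen regular model and to show that its correction factor $\prod_{\ell\in T\smallsetminus\{p\}}n_\ell$ equals $1$. Since $f$ is monic of even degree $2g+2$, its leading coefficient is a square and $Y=\cY_{\bQ}$ has two $\bQ$-rational cusps $\infty_+,\infty_-$ at infinity; thus $\#|D|=2$, $n=2$ and $n_2(D)=0$. The Affine Chabauty Condition \eqref{eq:uniform-chabauty-condition-intro} then reads $r<g+\#|D|+n_2(D)-1=g+1$, which holds because $r=g$, so the machinery of \Cref{thm:existence-of-chabauty-function-nf} and \S\ref{sec:bounds} applies. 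As $p>2g+2$ and $f$ remains separable modulo $p$, the prime $p$ is of good reduction, whence $\cY_p$ is smooth, $\#\cY^{\sm}_p(\bF_p)=\#\cY(\bF_p)$, and $2g-2+n=2g$. It therefore suffices to produce a regular model of $X$ over $\bZ$ for which every prime $\ell\neq p$ satisfies $\ell\notin T$, i.e.\ for which both cusps reduce onto a single irreducible component of the fibre $\cX_\ell$: then $T\subseteq\{p\}$, the product above is empty, and $\#\cY(\bZ)\leq\#\cY(\bF_p)+2g$ follows.

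At an odd prime $\ell$ the plan is to use a desingularisation of the Weierstrass model $y^2=f(x)$. In the chart at infinity $w^2=\tilde f(u)$, with $u=1/x$, $w=y/x^{g+1}$ and $\tilde f(u)=u^{2g+2}f(1/u)$, one has $\tilde f(0)=1$, so the cusps are the points $(u,w)=(0,\pm1)$; as $\ell$ is odd they reduce to two distinct points at which $\partial_w(w^2-\tilde f)=2w\neq0$, hence to smooth points of the special fibre. The hypothesis that $f\bmod\ell$ is not a square makes $y^2-\bar f(x)$ irreducible over $\overline{\bF_\ell}(x)$, so the affine special fibre is geometrically irreducible; both cusps lie on this unique component, and since they reduce to smooth points the surface is already regular there, so the resolution of the remaining singularities (all lying over multiple roots of $\bar f$ in the affine part) leaves them on the strict transform of that single component. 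Hence $\ell\notin T$.

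The prime $\ell=2$ is the heart of the matter and splits according to the hypothesis. If $f_{2g+1}$ is odd, then $\tilde f'(0)=f_{2g+1}\not\equiv0\pmod{2}$, so the common reduction $(0,1)$ of the two cusps (note $+1\equiv-1$) is a smooth point of the mod-$2$ Weierstrass fibre; a smooth point lies on a single component, so both cusps land there and $2\notin T$. If instead $f=4P+Q^2$, the plan is to use the integral model $z^2+Q(x)z=P(x)$ obtained via $z=(y-Q)/2$, which is isomorphic to the Weierstrass model over $\bZ[1/2]$ and may thus be taken as the global model without disturbing the odd-prime and mod-$p$ analysis. Here $\deg P\leq2g+1$, so in the chart at infinity $s^2+\tilde Q(u)s-\tilde P(u)=0$ (with $s=z/x^{g+1}$, $\tilde Q(0)=1$, $\tilde P(0)=0$) the cusps become the \emph{distinct} points $(u,s)=(0,0)$ and $(0,-1)$, both smooth modulo $2$ since $\partial_s=\tilde Q(0)=1$. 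A degree count shows every root $y_0\in\bF_2[x]$ of $y^2+\bar Qy-\bar P$ has $\deg y_0\leq g+1$, so the stated condition is precisely the irreducibility of $y^2+\bar Qy-\bar P$ over $\bF_2(x)$; this makes the mod-$2$ fibre irreducible over $\bF_2$, the two smooth cuspidal points lie on this single $\bF_2$-component, and resolving the affine singularities again keeps them together. Thus $2\notin T$ in both cases, and combining with the first paragraph finishes the proof.

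The main obstacle is the analysis at $\ell=2$: one must select the correct integral model so that the cusps reduce to smooth points (coincident in the first case, distinct in the second), convert each of the two hypotheses into the right smoothness/irreducibility statement over $\bF_2$, and verify that desingularising the affine singular points does not distribute the two cusps over different components. By contrast, the odd-prime step is comparatively routine once the ``not a square'' hypothesis is identified with geometric irreducibility of the affine fibre.
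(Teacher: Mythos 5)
Your proposal is correct and takes essentially the same route as the paper: check the Affine Chabauty Condition \eqref{eq:uniform-chabauty-condition-intro} with $S=\emptyset$, pass to a strong desingularisation of the Weierstrass model (resp.\ of the model $z^2+Q(x)z=P(x)$ at $2$), show that both cusps reduce onto a single component of every special fibre so that $T\subseteq\{p\}$, and apply the refined bound \eqref{eq:improved-bound-integral-points}. The only difference is presentational: where the paper invokes Lemmas~2.2 and~2.3 of \cite{liu:minmodels} for irreducibility of the fibres, smoothness of the cusp reductions and normality, you verify these facts by direct computation in the chart at infinity — including the nice observation that for $f_{2g+1}$ odd no irreducibility of the mod-$2$ fibre is needed, since both cusps reduce to the same smooth (hence regular, hence untouched by the desingularisation) point.
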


We remark that these technical assumptions on $f$ modulo all primes can be checked very quickly.

\begin{proof}
	Let $\cX/\bZ$ be the projective scheme given by the affine equation \eqref{eq:hyp-curve} as in \cite[Definition 1.1]{liu:minmodels}, i.e.\ we work in weighted projective space with weights $(1,g+1,1)$.
	Let $X$ be its generic fibre and let $D=\{\infty_+,\infty_-\}$ consist of the two points $\infty_{\pm} \coloneq (1:\pm 1:0)$ at infinity, which are $\bQ$-rational.
	Let $Y=X\smallsetminus D$ and $\cY=\cX\smallsetminus\cD$ as usual.
	We are interested in $\cY(\bZ)$.
	Assume there exists a rational point $P_0\in Y(\bQ)$, otherwise there is nothing to prove.
	Then $Y$ satisfies \eqref{eq:uniform-chabauty-condition-intro} for empty $S$, but we cannot directly apply \Cref{thm:bound} as $\cX$ might not be regular.
	Instead, we need to work with a desingularisation of $\cX$ as in \S\ref{sec:general-models}.
	The assumptions on $f$ imply that
	\begin{enumerate}
		\item $\cX_p$ is smooth;
		\item for odd $\ell$, the special fibre $\cX_\ell$ is irreducible and $\infty_{\pm}$ reduce to smooth points of $\cX_\ell$, and the scheme $\cX$ is normal at the points of $\cX_\ell$ by \cite[Lemma 2.2]{liu:minmodels}.
		In this case, we say $\cX$ satisfies $(\ast)$ at $\ell$;
		\item if $f_{2g+1}$ is odd, then $\cX$ satisfies $(\ast)$ at $2$ by \cite[Lemma 2.3(1)(c)]{liu:minmodels};
		\item otherwise, let $\cX'$ be given by the affine equation
		\[
			\cX'\colon\quad y^2 + Q(x)y = P(x)
		\]
		and $\cX'\to\cX$ be given by $(x,y)\mapsto(x,2y+Q(x))$.
		Then $\cX'$ satisfies $(\ast)$ at $2$ by \cite[Lemma 2.3(1)(a)]{liu:minmodels}.
	\end{enumerate}
	Let $\cX'$ be the normalisation of $\cX$.
	Then $\cX'$ satisfies $(\ast)$ at all primes $\ell$.
	Now let $\cX''$ be the strong desingularisation of $\cX'$ as in \cite[Theorem 9.3.44]{liu2006algebraic}.
	We have $\cY(\bZ)\subseteq\cY''(\bZ)$ and $\cY_p\cong\cY''_p$ by (1).
	As $\cX''\to\cX'$ is an isomorphism away from the singular locus, we see that $\infty_{\pm}$ reduce to the same component of $\cX''_\ell$ at every prime $\ell$.
	So $T$ is empty in the notation of \Cref{rem:improved-bound} and the bound \eqref{eq:improved-bound-integral-points} applied to $\cY''$ yields the result.
\end{proof}

\begin{rem}
	The condition on~$p$ in \Cref{thm:even-deg-hyperell-bound} can be slightly weakened: instead of requiring the Weierstraß model $\cX$ defined by $y^2 = f(x)$ to have good reduction at~$p$ it is enough that $f$ is not a square modulo~$p$ and $\cX$ is regular at all $\bF_p$-points in~$\cY$. One then has the bound $\#\cY(\bZ) \leq \#\cY_p^{\sm}(\bF_p) + 2g$.
\end{rem}

The bound of \Cref{thm:even-deg-hyperell-bound} is optimal, as the following examples demonstrate.

\begin{example}\label{ex:g1sharp}
	($g=1$) Let $f(x) = x^4 + 5x^3 + 6x^2 + x$.
	It is square-free, has discriminant $49$, the coefficient $f_{2g+1}=5$ is odd, and $f\equiv x(x+4)^3 \pmod 7$ is not a square.
	The Jacobian of the hyperelliptic curve defined by $y^2 = f(x)$ is the elliptic curve $v^2 = 1 + 5t + 6t^2 + t^3$, which is isomorphic to the elliptic curve with LMFDB label \href{https://www.lmfdb.org/EllipticCurve/Q/392/a/1}{392.a1} and has rank $1$.
	Take $p=5$.
	Then $\cY\colon y^2 = f(x)$ satisfies $\#\cY(\bF_5)=3$, so $\cY$ has at most $5$ integral points by \Cref{thm:even-deg-hyperell-bound}.
	A quick search finds the $5$ points $(0,0)$, $(-1,\pm 1)$, $(4,\pm 26)$, thus $\cY(\bZ) = \{(0,0),(-1,\pm 1),(4,\pm 26)\}$.
\end{example}

\begin{example}\label{ex:g2sharp}
	($g=2$) Let $f(x) = x^6 -28x^2+4$.
	It is square-free, has discriminant $-2^{16}\cdot 43^2\cdot 127^2$ and $f$ is not a square modulo $43$ or $127$.
	One readily checks the condition at $2$ with $Q=x^3$ and $P=1-7x^2$.
	The hyperelliptic curve defined by $y^2 = f(x)$ is bielliptic, hence its Jacobian $J$ is isogenous to the product of the two elliptic curves $y^2 = x^3 - 28x + 4$ and $y^2 = x^3 - 28x^2 + 16$ \cite[p.216]{BP:bielliptic}.
	These are isomorphic to the elliptic curves with LMFDB labels \href{https://www.lmfdb.org/EllipticCurve/Q/43688/a/1}{43688.a1} and \href{https://www.lmfdb.org/EllipticCurve/Q/5461/a/1}{5461.a1} respectively, which both have rank $1$, so $J(\bQ)$ has rank $2$.
	Take $p=7$.
	Then $\cY\colon y^2 = f(x)$ satisfies $\#\cY(\bF_7)=2$, hence \Cref{thm:even-deg-hyperell-bound} gives the bound $\#\cY(\bZ) \leq 6$.
	A quick search finds the six points $(0,\pm 2)$, $(7,\pm 341)$, $(-7,\pm 341)$, thus $\cY(\bZ) = \{(0,\pm 2), (7,\pm 341), (-7,\pm 341)\}$.
\end{example}

\subsection{Hyperelliptic curves over imaginary quadratic fields}
\label{sec:hyperelliptic-over-imaginary-quadratic}

Let $K$ be an imaginary quadratic field with ring of integers~$\cO_K$, and let $Y/K$ be an affine hyperelliptic curve defined by 
\begin{equation}
	\label{eq:hyperelliptic-imaginary-quadratic}
	Y\colon\quad y^2 = f(x)
\end{equation}
where $f(x) \in \cO_K[x]$ is a squarefree polynomial of degree $2g+2 \geq 4$ whose leading coefficient is a square in $\cO_K$. Let $\cY \subseteq \bA^2_{\cO_K}$ be the model over~$\cO_K$ defined by the same equation. Write $Y = X \smallsetminus D$ and $\cY = \cX \smallsetminus \cD$ with~$X$ and~$\cX$ given by taking the closure in the weighted projective space $\bP(1,g+1,1)$. Assume that $r = \rank J(K)$ is equal to ~$g$. Since the degree of~$f$ is even and the leading coefficient is a square, there are two points at infinity, which are both $K$-rational. Thus \eqref{eq:chabauty-condition-over-number-field} is satisfied for empty $S$ and we may apply \Cref{thm:existence-of-chabauty-function-nf} to determine the $\cO_K$-points of~$\cY$. Note that the model~$\cX/\cO_K$ defined by the equation~\eqref{eq:hyperelliptic-imaginary-quadratic} might not be regular, in which case we have to pass to a desingularisation $\cX' \to \cX$ in order to specify the reduction types as explained in \Cref{sec:general-models}.
Here we only use a single prime~$\fp$; %
one could also use all primes lying over a fixed rational prime as in \Cref{thm:ros-chabauty} and \cite[§5]{GM:LinearQuadraticChabauty}. %

As a concrete illustration of our \Cref{thm:existence-of-chabauty-function-nf}, we determine the $\bZ[\zeta_3]$-points on the genus~2 hyperelliptic curve with LMFDB label \href{https://www.lmfdb.org/Genus2Curve/Q/1549/a/1549/1}{1549.a.1549.1} defined by
\begin{equation}
	\label{eq:ex-imag}
	Y \colon\quad y^2 = x^6 - 4x^5 + 2x^4 + 6x^3 + x^2 - 10x + 1.
\end{equation}

\begin{thm}
	\label{thm:imaginary-example}
	The set $\cY(\bZ[\zeta_3])$ is equal to
	\begin{equation}
		\left \{ (0, \pm 1), \; (2, \pm 1), \; (1, \pm \sqrt{-3}) \right\}.
	\end{equation}
\end{thm}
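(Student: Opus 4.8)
The plan is to realise \Cref{thm:imaginary-example} as a direct application of the Affine Chabauty method, specifically \Cref{thm:existence-of-chabauty-function-nf} with $S = \emptyset$, followed by an explicit $p$-adic computation of the resulting Chabauty function and the analysis of its zeros.

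\textbf{Setting up the invariants.} First I would record the relevant data. Since $\deg f = 6$ we have $g = 2$, and since the leading coefficient $1$ is a square there are two $K$-rational points at infinity, so $D = \{\infty_+, \infty_-\}$ with $\#|D| = 2$ and $n = 2$. As $K = \bQ(\zeta_3)$ is imaginary quadratic, each residue field $k(\infty_\pm) = K$ has signature $(0,1)$, giving $n_1(D) = 0$, $n_2(D) = 2$ and $\rank \cO_K^\times = 0$; as a sanity check $n_1(D) + 2 n_2(D) = 4 = [K:\bQ]\,n$. The essential arithmetic input is the rank hypothesis of \S\ref{sec:hyperelliptic-over-imaginary-quadratic}: I would verify, e.g.\ by a descent computation on the Jacobian over $K$, that $r = \rank J(K) = 2 = g$, so that the setup of \Cref{thm:existence-of-chabauty-function-nf} applies.

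\textbf{Checking the Chabauty condition.} With $S = \emptyset$ every reduction type has empty cuspidal part, so $\#C(\Sigma) = 0$. The left-hand side of \eqref{eq:chabauty-condition-over-number-field} is $r + ([K:\bQ]-1)n = 2 + 2 = 4$, while the right-hand side is $g + \rank\cO_K^\times + \#|D| + n_2(D) - 1 = 2 + 2 + 2 - 1 = 5$. Since $4 < 5$, the condition holds for \emph{every} integral reduction type. By \Cref{thm:rank-of-selmer-set} each Selmer set $\Sel(P_0,\Sigma)$ is then a translate of a subgroup of rank $0 + 2 - 2 - 0 + 2 + 0 = 2$, which is strictly less than $\dim_{K_\fp} \rH^0(X_{K_\fp},\Omega^1(D)) = g + n - 1 = 3$. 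As all cuspidal parts agree, a single nonzero log differential $\omega$ annihilates the image of every Selmer subgroup under $\log_{J_Y}$.

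\textbf{Producing and analysing the Chabauty function.} I would choose a split prime $\fp$ of good reduction with $K_\fp = \bQ_p$ and $p > 2g + n = 6$ — concretely a prime $p \equiv 1 \pmod 3$ such as $p = 7$ — and fix the base point $P_0 = (0,1) \in \cY(\bZ[\zeta_3])$. Running the construction in the proof of \Cref{thm:existence-of-chabauty-function-nf} produces $\omega \in \rH^0(X_{\bQ_p}, \Omega^1(D))$ and, for each reduction type $\Sigma$, a constant $c_\Sigma$ (with $c_\Sigma = 0$ for the type of $P_0$) such that $\rho_\Sigma(P) = \int_{P_0}^P \omega - c_\Sigma$ vanishes on $\cY(\bZ[\zeta_3])_\Sigma$. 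Because $S=\emptyset$ the reduction types differ only at primes of bad reduction of a regular model, so there are finitely many; on each residue disc the integral of $\omega$ is a convergent power series (\S\ref{sec:tiny-integrals}), and its zeros are cut out by Newton-polygon analysis (\Cref{lem:newtpolyg}, using $n_C \le 2g - 2 + n = 4 < p - 2$). This yields a finite $p$-adic superset of $\cY(\bZ[\zeta_3])$.

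\textbf{Matching and the main obstacle.} Finally I would confirm the six listed points are $\bZ[\zeta_3]$-integral — note $\sqrt{-3} = 1 + 2\zeta_3 \in \bZ[\zeta_3]$, so $(1, \pm\sqrt{-3})$ are integral — and that each occurs among the computed $p$-adic zeros. The main obstacle is excluding spurious zeros: the zero locus of $\rho_\Sigma$ may a priori contain more $p$-adic points than there are global integral points. I would resolve this either by arranging that the zero count matches the six known points exactly (choosing $p$ and $P_0$ so the Newton-polygon bound is sharp in each relevant disc), or by intersecting the loci from a second split prime, or via a Mordell--Weil sieve on $J(K)$. A subsidiary technical point is that the model given by $y^2 = f(x)$ in $\bP(1,g+1,1)$ may fail to be regular, so I would first pass to a desingularisation as in \S\ref{sec:general-models} to enumerate the reduction types correctly; the hardest concrete computation is determining $\omega$ itself, which requires explicit infinite-order generators of $J(K)$ and their $p$-adic abelian integrals to sufficient precision.
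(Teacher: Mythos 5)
Your proposal is correct and follows essentially the same route as the paper: \Cref{thm:existence-of-chabauty-function-nf} with $S=\emptyset$ at the split prime $\fp=(2-\sqrt{-3})$ above $p=7$, base point $P_0=(0,1)$, the same numerical verification of \eqref{eq:chabauty-condition-over-number-field}, and zero-counting on residue discs via power series and Strassmann/Newton-polygon analysis, with the paper additionally checking (via a regular model whose only bad fibres, over the two primes dividing $1549$, each have a single component) that there is just \emph{one} reduction type, so a single function with $c=0$ suffices. The two issues you leave open are resolved exactly along your first suggested lines: spurious zeros are excluded because the six known points occupy all six residue discs of $\cY(\bF_7)$ and Strassmann shows each disc carries exactly one zero, while your ``hardest computation'' of $\omega$ is sidestepped by taking $\rho$ to be the $3\times 3$ determinant \eqref{eq:ex-imag-determinant-equation} built from the known points $P_1,P_2$ themselves (whose independence in $J(K)$, checked by a $2\times 2$ determinant of integrals, makes them the required generators), with $\omega$ read off by expanding along the last row.
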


It is convenient to choose an auxiliary prime~$\fp$ with $K_{\fp} = \bQ_p$. %
We choose $\fp \coloneqq (2 - \sqrt{-3})$ which divides the rational prime~$7$. The Jacobian has rank~1 over~$\bQ$ but rank~$2$ over~$K = \bQ(\zeta_3)$. For the lower bound $\rank J(K) \geq 2$ it suffices to check that
\[ \det \begin{pmatrix}
	\int_{P_0}^{P_1} \omega_0 & \int_{P_0}^{P_1} \omega_1 \\[2mm]
	\int_{P_0}^{P_2} \omega_0 & \int_{P_0}^{P_2} \omega_1
\end{pmatrix} \neq 0, \]
where $P_0 = (0,1)$, $P_1 = (2,1)$, $P_2 = (1,\sqrt{-3})$, and $\omega_j = x^j \frac{\rd x}{y}$. The non-vanishing of the determinant implies that the divisors $[P_1] - [P_0]$ and $[P_2] - [P_0]$ are linearly independent in the Jacobian. The upper bound $\rank J(K) \leq 2$ can be obtained by an application of the \texttt{RankBound} function in Magma. The model~$\cX_0$ over~$\bZ$ given by \eqref{eq:ex-imag} has bad reduction at~2 and~1549. The model~$\cX'_0$ with the minimal equation
\[ y^2 + (x^3 + x + 1)y = -x^5 + x^3 - 3x \]
has good reduction at~2 and the map $\cX_0' \to \cX_0$, $(x,y) \mapsto (x, 2y + x^3 + x + 1)$ is an isomorphism over all odd primes, so the only bad prime for this model is $q = 1549$. An application of the \texttt{RegularModel} function in Magma shows that $\cX_0'$ is regular and that the fibre over $q$ has only one component. Let $\cX' \to \cX$ be the base change of~$\cX_0' \to \cX_0$ from~$\bZ$ to~$\cO_K$. Then the only bad fibres of~$\cX'$ are at the two primes of~$\cO_K$ dividing~$q$, and for $\fq|q$ we have $\cO_{K,\fq}\cong\bZ_q$ and thus $\cX \otimes \cO_{K,\fq} \cong \cX_0 \otimes \bZ_q$.
Hence $\cX'$ is regular and the fibres over~$\fq|q$ still have only one component. Therefore, all $\cO_K$-points have the same reduction type with respect to the model~$\cX'$.
\Cref{thm:existence-of-chabauty-function-nf} implies the existence of a nonzero log differential $\omega \in \rH^0(X_{K_{\fp}},\Omega^1(D))$ such that the function %
\[ \cY(\cO_{\fp}) \to K_{\fp},\quad P \mapsto \int_{P_0}^P \omega \]
vanishes on $\cY(\cO_K)$. There is no constant in the Chabauty function since the base point $P_0 = (0,1)$ is itself an $\cO_K$-integral point. The annihilating differential~$\omega$ is a nontrivial linear combination of the three basis differentials $\omega_0, \omega_1, \omega_2$ of~$\rH^0(X, \Omega^1(D))$, where $\omega_j = x^j \frac{\rd x}{y}$. Using the known integral points $P_0,P_1,P_2$, we find that the function $\rho\colon \cY(\cO_{\fp}) \to K_{\fp}$ defined by
\begin{equation}
	\label{eq:ex-imag-determinant-equation}
	\rho(P) \coloneqq \det \begin{pmatrix}
	\int_{P_0}^{P_1} \omega_0 & \int_{P_0}^{P_1} \omega_1 & \int_{P_0}^{P_1} \omega_2 \\[2mm]
	\int_{P_0}^{P_2} \omega_0 & \int_{P_0}^{P_2} \omega_1 & \int_{P_0}^{P_2} \omega_2 \\[2mm]
	\int_{P_0}^P \omega_0 & \int_{P_0}^P \omega_1 & \int_{P_0}^P \omega_2
\end{pmatrix}
\end{equation}
vanishes on $\cY(\cO_K)$. We remark that it does not necessarily vanish on $K$-rational points. For example for $P = (33/20, 4073/8000)$, which is not integral at~$2$ and~$5$, one has $\rho(P) \neq 0$ in~$\bQ_7$. Expanding the determinant in~\eqref{eq:ex-imag-determinant-equation} along the last row, we see that an annihilating log differential is given by $\omega = \alpha_0 \omega_0 + \alpha_1 \omega_1 + \alpha_2 \omega_2$ with coefficients
\begin{align*}
\alpha_0 &= 6\cdot 7^2 + 6\cdot 7^3 + 3\cdot 7^4 + 7^5 + 5\cdot 7^6 + 4\cdot 7^7 + O(7^8),\\
\alpha_1 &= 3\cdot 7^4 + 3\cdot 7^5 + 5\cdot 7^6 + 5\cdot 7^7 + O(7^8),\\
\alpha_2 &= 6\cdot 7^2 + 5\cdot 7^3 + 3\cdot 7^4 + 6\cdot 7^5 + 3\cdot 7^7 + O(7^8).
\end{align*}
We can analyse the zeros of~$\rho$ on each residue disc by computing the power series expansion in a uniformising parameter. For example, consider the point $P_0 = (0,1)$. Its residue disc is parametrised by $Q(t) = (x(t), y(t))$ with $x(t)=7t$, $t \in \bZ_7$, and
\begin{align*}
	y(t) = \sqrt{f(x(t))} = \sum_{n=0}^\infty \binom{1/2}{n}(f(7t)-1)^n = 1 + \frac12 (f(7t)-1) - \frac14 (f(7t)-1)^2 + \ldots
\end{align*}
In terms of the parameter~$t$, the function~$\rho(Q(t)) = \sum_{j=0}^2 \alpha_j \int_{P_0}^{Q(t)} x(t)^j \frac{\rd x(t)}{y(t)}$ is given by
\begin{align*}
	&(7^3 + 3\cdot 7^5 + 5\cdot 7^6 + O(7^7))t + (6\cdot 7^4 + 3\cdot 7^5 + 2\cdot 7^6 + O(7^7))t^2 + (7^5 + 2\cdot 7^6 + O(7^7)) t^3 + \ldots %
\end{align*}
The sequence of valuations of the coefficients starts $(\infty, 3, 4, 5, 6, 11, 8, \ldots)$. Using the fact that the basis differentials $\omega_i$ have power series expansions in $7\bZ_7[\![7t]\!] \rd t$, one can bound the subsequent valuations by~$8$ from below. Hence, by Strassmann's Theorem, the only zero of $\rho(Q(t))$ in~$\bZ_7$ is at $t = 0$. This shows that $P_0 = (0,1)$ is the only $\cO_K$-point of~$\cY$ in its residue disc. Each of the remaining residue discs contains exactly one of the known $\cO_K$-points, and the same argument shows that in each case, there are no other $\cO_K$-points in that disc, proving \Cref{thm:imaginary-example}.

The Sage code for this example can be found at \url{https://github.com/martinluedtke/AffChab1}.

\printbibliography
	
\end{document}